\newcommand{\loza}
{--++ (1/2,{sqrt(3)/2})
--++ (-1/2,{sqrt(3)/2})
--++ (-1/2,{-sqrt(3)/2})
--++ (1/2,{-sqrt(3)/2})
[fill=yellow!100!white!]}
\newcommand{\lozb}
{--++ (1,0)
--++ (-1/2,{sqrt(3)/2})
--++ (-1,0)
--++ (1/2,{-sqrt(3)/2})
[fill=blue!80!white!]}
\newcommand{\lozc}
{--++ (1,0)
--++ (1/2,{sqrt(3)/2})
--++ (-1,0)
--++ (-1/2,{-sqrt(3)/2})
[fill=red!60!white!]}
\newcommand{\lozyt}
{--++ (0,1)
--++ (-1,1)
--++ (0,-1)
--++ (1,-1)
[fill=yellow!100!white!]}
\newcommand{\lozbt}
{--++ (1,0)
--++ (-1,1)
--++ (-1,0)
--++ (1,-1)
[fill=blue!80!white!]}
\newcommand{\lozrt}
{--++ (1,0)
--++ (0,1)
--++ (-1,0)
--++ (0,-1)
[fill=red!60!white!]}
\newcommand{\lozy}
{--++ (1/4,{sqrt(3)/4})
--++ (-1/4,{sqrt(3)/4})
--++ (-1/4,{-sqrt(3)/4})
--++ (1/4,{-sqrt(3)/4})
[fill=yellow!100!white!]}
\newcommand{\lozr}
{--++({sqrt(3)/8},-1/8)
--++({sqrt(3)/8},1/8)--++(-{sqrt(3)/8},1/8)
--++(-{sqrt(3)/8},-1/8)
[fill=red!60!white!]}
\newcommand{\lozg}
{--++({sqrt(3)/8},1/8)
--++(0,1/4)--++(-{sqrt(3)/8},-1/8)
--++(0,-1/4)
[fill=yellow!60!white!]}
\newcommand{\lozbl}
{--++({sqrt(3)/8},-1/8)
--++(0,1/4)--++(-{sqrt(3)/8},1/8)
--++(0,-1/4)
[fill=blue!60!white!]}
\newcommand{\lozp}
{--++({sqrt(3)/8},-1/8)
--++({sqrt(3)/8},1/8)--++(-{sqrt(3)/8},1/8)
--++(-{sqrt(3)/8},-1/8)
[fill=purple!80!white!]}
\newcommand{\lozblu}
{--++({sqrt(3)/8},-1/8)
--++({sqrt(3)/8},1/8)--++(-{sqrt(3)/8},1/8)
--++(-{sqrt(3)/8},-1/8)
[fill=red!100!white!]}
\theoremstyle{plain}
\newtheorem{Lem}{Lemma}
\numberwithin{Lem}{section}
\newtheorem{Prop}{Proposition}
\numberwithin{Prop}{section}
\newtheorem{Thm}{Theorem}
\numberwithin{Thm}{section}
\newtheorem{Cor}{Corollary}
\numberwithin{Cor}{section}
\numberwithin{Con}{section}
\theoremstyle{definition}
\newtheorem{Def}{Definition}
\numberwithin{Def}{section}
\newtheorem{Assump}{Assumption}
\numberwithin{conj}{section}
\newtheorem{ex}{Example}
\numberwithin{ex}{section}
\theoremstyle{remark}
\newtheorem{rem}{Remark}
\numberwithin{rem}{section}
\numberwithin{equation}{section}
\newcommand{\dv}{\partial}
\newcommand{\eps}{\varepsilon}
\newcommand{\E}{{\mathbb E}}
\newcommand{\R}{{\mathbb R}}
\newcommand{\Pp}{{\mathbb P}}
\newcommand{\C}{{\mathbb C}}
\newcommand{\Z}{{\mathbb Z}}
\newcommand{\N}{{\mathbb N}}
\newcommand{\G}{\Gamma}
\newcommand{\x}{\chi}
\newcommand{\p}{\mathbb{P}}
\newcommand{\Hp}{\mathbb{H}}
\newcommand{\supp}{\text{supp}}
\newcommand{\EE}{\mathcal{E}}
\newcommand{\g}{\gamma}
\newcommand{\LL}{\mathcal{L}}
\begin{document}

\vspace{.4cm}

\title{The Cusp-Airy Process}
\author{Erik Duse \thanks{Supported the grant KAW 2010.0063 from the Knut and Alice Wallenberg
Foundation} \and  Kurt Johansson \thanks{Supported the grant KAW 2010.0063 from the Knut and Alice Wallenberg
Foundation} \and Anthony Metcalfe}

\maketitle

\begin{abstract}
At a typical cusp point of the disordered region in a random tiling model we expect to see a determinantal process called the Pearcey process in the
appropriate scaling limit. However, in certain situations another limiting point process appears that we call the Cusp-Airy process, which is a kind of two sided extension of the Airy kernel point process. We will study this problem in a class of random lozenge tiling models coming from interlacing particle systems. The situation was briefly studied
previously by Okounkov and Reshetikhin under the name cuspidal turning point.
\end{abstract}
\tableofcontents
\clearpage
\section{Introduction and Results}

\subsection{The Cusp-Airy kernel}\label{sec:intro}

In this paper we will study random discrete interlacing particle systems which can also be interpreted as certain random lozenge tiling models.
The particles, or lozenges, form a random point process which is determinantal. We are particularly interested in the limiting
point process around the type of cusp point we see in the arctic curve in figure \ref{figFrozenBoundary}, see figure 1 in \cite{KeOk1} or \cite{KeOk2} for
a simulation. Figure \ref{figFrozenBoundary} illustrates the \emph{liquid region} $\LL$ and its boundary $\EE$, the {\it arctic curve}. Inside the liquid region one expects to see the \emph{extended sine-kernel} point process in the limit. At the tangency points of the polygon and the boundary $\EE$ one expects to see the \emph{GUE-corner} process. At all other points of the curve $\EE$, except the cusp, one expects either the \emph{Airy kernel} or \emph{Id-Airy  kernel} point processes in the appropriate scaling limit.

To clarify the situation around the cusp point consider figure \ref{figCardioid1}.  All possible tiling configurations of the polygon can be encoded by the red rhombi. This is illustrated in figure \ref{figInterlacing1}, where we see that the positions of the red rhombi form  two interlacing regions that meet at the dashed line. The rhombi at the common line have been coloured in purple. The purple rhombi form a \emph{discrete orthogonal polynomial ensemble}, DOPE, see Remark \ref{remDOPE}. The dashed line is also a symmetry line (coloured blue in figure \ref{figCardioid1}). The fact that we have two symmetric interlacing systems meeting at common line, will imply that the frozen boundary has a reflection symmetry in the symmetry line. It will also imply that the particle system consisting of red rhombi will have no horizontal oscillations. Therefore, when considering a scaling limit at the cusp on the symmetry line of this determinantal point process, the correct scaling is discrete in the horizontal direction and continuous of size $n^{1/3}$ in the vertical direction, where $n$ is the size of the hexagon. Going back to figure \ref{figCardioid1}, we see that directly above the tip of the cusp the blue and yellow rhombi form a corner. This implies that the height function in \cite{KeOk2} will have a jump above the tip of the cusp. This should be contrasted to the situation where one expects to find a Pearcey process in the scaling limit around a cusp point of the arctic curve. Then one has only one type of rhombi in the frozen configuration inside the cusp. This also implies that the height function is flat inside the cusp.

\begin{figure}[H]
\centering
\begin{tikzpicture}[xscale=0.7,yscale=0.7]

\draw[thick] (0,0) -- (0,4) --(-{sqrt(3)},5) -- (-{2*sqrt(3)},4) -- (-{3*sqrt(3)},5) -- (-{4*sqrt(3)},4) -- (-{4*sqrt(3)},0) -- (-{2*sqrt(3)},-2) --(0,0);
\filldraw[blue!10!white!] (-{2*sqrt(3)},3) to [out=90,in=330] (-{2.5*sqrt(3)},4.5) to [out=150,in=30]  (-{3.5*sqrt(3)},4.5) to [out=210,in=90] (-{4*sqrt(3)},2)
to [out=270,in=150] (-{3*sqrt(3)},-1) to [out=330,in=210] (-{sqrt(3)},-1) to [out=30,in=270] (0,2) to [out=90,in=330] (-{0.5*sqrt(3)},4.5)
to [out=150,in=30] (-{1.5*sqrt(3)},4.5) to [out=210,in=90] (-{2*sqrt(3)},3);
\draw[thick] (-{2*sqrt(3)},3) to [out=90,in=330] (-{2.5*sqrt(3)},4.5) to [out=150,in=30]  (-{3.5*sqrt(3)},4.5) to [out=210,in=90] (-{4*sqrt(3)},2)
to [out=270,in=150] (-{3*sqrt(3)},-1) to [out=330,in=210] (-{sqrt(3)},-1) to [out=30,in=270] (0,2) to [out=90,in=330] (-{0.5*sqrt(3)},4.5)
to [out=150,in=30] (-{1.5*sqrt(3)},4.5) to [out=210,in=90] (-{2*sqrt(3)},3);
\draw[thick,dashed] (-1.73,5)--(-3.46,6) -- (-5.20,5);
\draw[thick,dashed] (-3.46,-3)--(-3.46,7);
\draw (-{3*sqrt(3)},1) node {\Large$\mathcal{L}$};
\draw (1.2,-1) node {\Large$\mathcal{E}$};
\draw[->,thick] (1,-1.3) to [out=210,in=310] (-0.5,0.1);
\end{tikzpicture}
\caption{\label{figFrozenBoundary}The \emph{liquid region} $\LL$ is coloured in blue. Its boundary is $\EE$.}

\end{figure}

\begin{figure}[H]
\centering
\begin{tikzpicture}[xscale=0.7,yscale=0.7]

\draw (-{2.125*sqrt(3)},4.125) \lozr;
\draw (-{2.125*sqrt(3)},{4.125+0.25}) \lozr;
\draw (-{2.125*sqrt(3)},{4.125+0.25}) \lozr;
\draw (-{2.125*sqrt(3)},{4.125+0.5}) \lozr;
\draw (-{2.125*sqrt(3)},{4.125+0.75}) \lozr;
\draw (-{2.125*sqrt(3)},{4.125+1}) \lozr;
\draw (-{2.125*sqrt(3)},{4.125+1.25}) \lozr;
\draw (-{2.125*sqrt(3)},{4.125+1.5}) \lozr;
\draw (-{2.125*sqrt(3)},{4.125+1.75}) \lozr;
\draw (-{2.125*sqrt(3)},{2.125}) \lozr;
\draw (-{2.125*sqrt(3)},{2.125-1}) \lozr;
\draw (-{2.125*sqrt(3)},{2.125-1.25-0.5}) \lozr;
\draw (-{2.125*sqrt(3)},{2.125-2}) \lozr;
\draw (-{2.125*sqrt(3)},{2.125-2.5}) \lozr;
\draw (-{2.125*sqrt(3)},{2.125-3}) \lozr;
\draw (-{2.125*sqrt(3)},{2.125-3.75}) \lozr;
\draw (-{2.125*sqrt(3)},{2.125-4}) \lozr;
\draw (-{2.125*sqrt(3)-sqrt(3)/8},{2.125+0.875}) \lozr;
\draw (-{2.125*sqrt(3)-sqrt(3)/4},{2.125+1}) \lozr;
\draw (-{2.125*sqrt(3)-3*sqrt(3)/8},{2.125+1.5+0.125}) \lozr;
\draw (-{2.125*sqrt(3)-4*sqrt(3)/8},{2.125+2}) \lozr;
\draw (-{2.125*sqrt(3)-5*sqrt(3)/8},{2.125+2.375}) \lozr;
\draw (-{2.125*sqrt(3)},{2.125}) \lozg;
\draw (-{2.125*sqrt(3)},{2.125+0.25}) \lozg;
\draw (-{2.125*sqrt(3)},{2.125+0.5}) \lozg;
\draw (-{2.125*sqrt(3)},{2.125+1}) \lozbl;
\draw (-{2.125*sqrt(3)},{2.125+1.25}) \lozbl;
\draw (-{2.125*sqrt(3)},{2.125+1.5}) \lozbl;
\draw (-{2.125*sqrt(3)},{2.125+1.75}) \lozbl;
\draw (-{2.125*sqrt(3)-sqrt(3)/8},{2.125+1.125}) \lozbl;
\draw (-{2.125*sqrt(3)-sqrt(3)/8},{2.125+1.25+0.125}) \lozbl;
\draw (-{2.125*sqrt(3)-sqrt(3)/8},{2.125+1.5+0.125}) \lozbl;
\draw (-{2.125*sqrt(3)-sqrt(3)/8},{2.125+1.75+0.125}) \lozbl;
\draw (-{2.125*sqrt(3)-sqrt(3)/4},{2.125+1}) \lozg;
\draw (-{2.125*sqrt(3)-sqrt(3)/4},{2.125+1.25}) \lozg;
\draw (-{2.125*sqrt(3)-sqrt(3)/4},{2.125+1.75}) \lozbl;
\draw (-{2.125*sqrt(3)-sqrt(3)/4},{2.125+2}) \lozbl;
\draw (-{2.125*sqrt(3)-3*sqrt(3)/8},{2.125+2.125}) \lozbl;
\draw (-{2.125*sqrt(3)-3*sqrt(3)/8},{2.125+1.625}) \lozg;
\draw (-{2.125*sqrt(3)-4*sqrt(3)/8},{2.125+2}) \lozg;
\draw (-{2.125*sqrt(3)+sqrt(3)/8},{2.125+0.625}) \lozr;
\draw (-{2.125*sqrt(3)+sqrt(3)/4},{2.125+1}) \lozr;
\draw (-{2.125*sqrt(3)+3*sqrt(3)/8},{2.125+1.5+0.125}) \lozr;
\draw (-{2.125*sqrt(3)+4*sqrt(3)/8},{2.125+2.25}) \lozr;
\draw (-{2.125*sqrt(3)+5*sqrt(3)/8},{2.125+2.375}) \lozr;
\draw (-{2.125*sqrt(3)+sqrt(3)/8},{2.25}) \lozbl;
\draw (-{2.125*sqrt(3)+sqrt(3)/8},{2+0.5}) \lozbl;
\draw (-{2.125*sqrt(3)+sqrt(3)/8},{2.75}) \lozg;
\draw (-{2.125*sqrt(3)+sqrt(3)/8},{2+1}) \lozg;
\draw (-{2.125*sqrt(3)+sqrt(3)/8},{2+1.25}) \lozg;
\draw (-{2.125*sqrt(3)+sqrt(3)/8},{2+1.5}) \lozg;
\draw (-{2.125*sqrt(3)+sqrt(3)/8},{2+1.75}) \lozg;
\draw (-{2.125*sqrt(3)+2*sqrt(3)/8},{2+0.75+0.125}) \lozbl;
\draw (-{2.125*sqrt(3)+2*sqrt(3)/8},{2+1.125}) \lozg;
\draw (-{2.125*sqrt(3)+2*sqrt(3)/8},{2+1.25+0.125}) \lozg;
\draw (-{2.125*sqrt(3)+2*sqrt(3)/8},{2+1.5+0.125}) \lozg;
\draw (-{2.125*sqrt(3)+2*sqrt(3)/8},{2+1.75+0.125}) \lozg;
\draw (-{2.125*sqrt(3)+3*sqrt(3)/8},{2+1.125+0.125}) \lozbl;
\draw (-{2.125*sqrt(3)+3*sqrt(3)/8},{2+1.125+0.125+0.25}) \lozbl;
\draw (-{2.125*sqrt(3)+3*sqrt(3)/8},{2+1.5+0.25}) \lozg;
\draw (-{2.125*sqrt(3)+3*sqrt(3)/8},{2+1.75+0.25}) \lozg;
\draw (-{2.125*sqrt(3)+4*sqrt(3)/8},{2+1.125+0.125+0.625}) \lozbl;
\draw (-{2.125*sqrt(3)+4*sqrt(3)/8},{2+1.125+0.125+0.25+0.625}) \lozbl;
\draw[thick] (0,0) -- (0,4) --(-{sqrt(3)},5) -- (-{2*sqrt(3)},4) -- (-{3*sqrt(3)},5) -- (-{4*sqrt(3)},4) -- (-{4*sqrt(3)},0) -- (-{2*sqrt(3)},-2) --(0,0);
\draw[thick,dashed] (-1.73,5)--(-3.46,6) -- (-5.20,5);
\draw[thick,dashed,blue] (-3.46,-3)--(-3.46,7);

\end{tikzpicture}
\caption{\label{figCardioid1}}

\end{figure}

\begin{figure}[H]
\centering
\begin{tikzpicture}[xscale=1,yscale=1]

\draw ({-sqrt(3)/8},{3}) \lozp;
\draw ({-sqrt(3)/8},{3.25}) \lozp;
\draw ({-sqrt(3)/8},{2.75}) \lozp;
\draw ({-sqrt(3)/8},{2.5}) \lozp;
\draw ({-sqrt(3)/8},{1}) \lozp;
\draw ({-sqrt(3)/8},{0.25}) \lozp;
\draw ({-sqrt(3)/8},{-0.25}) \lozp;
\draw ({-sqrt(3)/8},{-0.5}) \lozp;
\draw ({-sqrt(3)/8-0.5},{3-0.125}) \lozr;
\draw ({-sqrt(3)/8-0.5},{3.25-0.125}) \lozr;
\draw ({-sqrt(3)/8-0.5},{2.75-0.125}) \lozr;
\draw ({-sqrt(3)/8-1},{3-0.25}) \lozr;
\draw ({-sqrt(3)/8-1},{3.25-0.25}) \lozr;
\draw ({-sqrt(3)/8-1.5},{3.25-0.25-0.125}) \lozr;
\draw ({-sqrt(3)/8-0.5},{1.75+0.125}) \lozr;
\draw ({-sqrt(3)/8-0.5},{0.75+0.125}) \lozr;
\draw ({-sqrt(3)/8-0.5},{0.125}) \lozr;
\draw ({-sqrt(3)/8-0.5},{-0.25-0.125}) \lozr;
\draw ({-sqrt(3)/8-1},{2.25}) \lozr;
\draw ({-sqrt(3)/8-1},{1.25}) \lozr;
\draw ({-sqrt(3)/8-1},{0.5}) \lozr;
\draw ({-sqrt(3)/8-1},{0}) \lozr;
\draw ({-sqrt(3)/8-1.5},{2.25+0.125+0.25}) \lozr;
\draw ({-sqrt(3)/8-1.5},{1.25+0.125+0.5}) \lozr;
\draw ({-sqrt(3)/8-1.5},{0.5+0.125+0.25}) \lozr;
\draw ({-sqrt(3)/8-1.5},{0.125}) \lozr;
\draw ({-sqrt(3)/8-2},{2.25+0.5}) \lozr;
\draw ({-sqrt(3)/8-2},{1.25+0.125+0.5+0.25}) \lozr;
\draw ({-sqrt(3)/8-2},{0.5+0.125+0.25+0.5}) \lozr;
\draw ({-sqrt(3)/8-2},{0.125+0.5}) \lozr;
\draw ({-sqrt(3)/8-2.5},{1.25+0.125+0.75+0.25}) \lozr;
\draw ({-sqrt(3)/8-2.5},{0.5+0.125+0.25+0.5+0.5}) \lozr;
\draw ({-sqrt(3)/8-2.5},{0.125+0.5+0.25}) \lozr;
\draw ({-sqrt(3)/8-3},{0.5+0.125+0.25+0.5+0.5+0.25}) \lozr;
\draw ({-sqrt(3)/8-3},{0.125+0.5+0.25+0.75}) \lozr;
\draw ({-sqrt(3)/8-3.5},{0.125+0.5+0.125+1}) \lozr;
\draw ({-sqrt(3)/8+0.5},{3-0.125}) \lozblu;
\draw ({-sqrt(3)/8+0.5},{3.25-0.125}) \lozblu;
\draw ({-sqrt(3)/8+0.5},{2.75-0.125}) \lozblu;
\draw ({-sqrt(3)/8+1},{3-0.25}) \lozblu;
\draw ({-sqrt(3)/8+1},{3.25-0.25}) \lozblu;
\draw ({-sqrt(3)/8+1.5},{3.25-0.25-0.125}) \lozblu;
\draw ({-sqrt(3)/8+0.5},{1.75+0.125+0.25}) \lozblu;
\draw ({-sqrt(3)/8+0.5},{0.75+0.125-0.25}) \lozblu;
\draw ({-sqrt(3)/8+0.5},{0}) \lozblu;
\draw ({-sqrt(3)/8+0.5},{-0.25-0.125}) \lozblu;
\draw ({-sqrt(3)/8+1},{2.25+0.25}) \lozblu;
\draw ({-sqrt(3)/8+1},{1.25+0.25}) \lozblu;
\draw ({-sqrt(3)/8+1},{0.5-0.125}) \lozblu;
\draw ({-sqrt(3)/8+1},{-0.125}) \lozblu;
\draw ({-sqrt(3)/8+1.5},{2.25+0.25+0.125}) \lozblu;
\draw ({-sqrt(3)/8+1.5},{1.25+0.125+0.25}) \lozblu;
\draw ({-sqrt(3)/8+1.5},{0.5+0.125+0.25}) \lozblu;
\draw ({-sqrt(3)/8+1.5},{0.125}) \lozblu;
\draw ({-sqrt(3)/8+2},{2.25+0.5}) \lozblu;
\draw ({-sqrt(3)/8+2},{1.25+0.125+0.5}) \lozblu;
\draw ({-sqrt(3)/8+2},{0.5+0.125+0.25+0.5}) \lozblu;
\draw ({-sqrt(3)/8+2},{0.125+0.5}) \lozblu;
\draw ({-sqrt(3)/8+2.5},{1.25+0.125+0.75+0.25}) \lozblu;
\draw ({-sqrt(3)/8+2.5},{0.5+0.125+0.25+0.5+0.25}) \lozblu;
\draw ({-sqrt(3)/8+2.5},{0.125+0.5+0.25}) \lozblu;
\draw ({-sqrt(3)/8+3},{0.5+0.125+0.25+0.5+0.5}) \lozblu;
\draw ({-sqrt(3)/8+3},{0.125+0.5+0.25+0.75+0.25}) \lozblu;
\draw ({-sqrt(3)/8+3},{0.125+0.5+0.75}) \lozblu;
\draw ({-sqrt(3)/8+3.5},{0.125+0.125+1+0.25}) \lozblu;
\draw[thick,dashed] (0,-2) -- (0,4); 
\draw[thick] (-0.5,-2) -- (-0.5,4); 
\draw[thick] (-1,-2) -- (-1,4);
\draw[thick] (-1.5,-2) -- (-1.5,4);
\draw[thick] (-2,-2) -- (-2,4);
\draw[thick] (-2.5,-2) -- (-2.5,4);
\draw[thick] (-3,-2) -- (-3,4);
\draw[thick] (-3.5,-2) -- (-3.5,4);
\draw[thick] (0.5,-2) -- (0.5,4); 
\draw[thick] (1,-2) -- (1,4);
\draw[thick] (1.5,-2) -- (1.5,4);
\draw[thick] (2,-2) -- (2,4);
\draw[thick] (2.5,-2) -- (2.5,4);
\draw[thick] (3,-2) -- (3,4);
\draw[thick] (3.5,-2) -- (3.5,4);
\draw[->,thick] (4.5,1)--(4.5,2);
\draw[->,thick] (4.5,1)--(5.5,1);
\draw (6.1,2) node { \small$\xi$, continuous scaling};
\draw (6.5,1.2) node { \small$r$, discrete scaling};
\end{tikzpicture}
\caption{\label{figInterlacing1}}

\end{figure}

At the cusp, in an appropriate scaling limit, we will see that the correlation kernel for the determinantal point process given by the red rhombi converges to a process with a kernel that we call the {\it Cusp-Airy kernel}. We will show this for the model corresponding to figure \ref{figFrozenBoundary} up to certain natural technical conditions for a particular DOPE. In a simpler model of the type studied by Petrov in \cite{Pet12} we will give the full proof. 
This type of cusp situation in a random lozenge tiling model was discovered and discussed briefly by Okounkov and Reshetikhin in \cite{OkRe} who called it a {\it Cuspidal turning point}. The interpretation of their formula is however not completely clear, see remark \ref{Cuspidal} below.

Let us give the expression for the Cusp-Airy kernel.

\begin{Def}
\label{CAkerneldefinition}
For $r,s\in \Z$ and $\xi,\tau\in \R$ we define the \emph{Cusp-Airy kernel} by
\begin{align} 
\label{CAKernel}
\mathcal{K}_{CA}((\xi,r),(\tau,s))=-1_{\tau\geq \xi}1_{s>r}\frac{(\tau-\xi)^{s-r-1}}{(s-r-1)!}+\frac{1}{(2\pi i)^2}\int_{\mathscr{L}_L+\mathscr{C}_{out}}dz\int_{\mathscr{L}_R+\mathscr{C}_{in}}dw\frac{1}{w-z}\frac{w^r}{z^s}e^{\frac{1}{3}w^3-\frac{1}{3}z^3-\xi w+\tau z},
\end{align}

where the contours are defined in figure \ref{figCuspContour}, and $1_{a<b}$ is the indicator function for $a<b$.
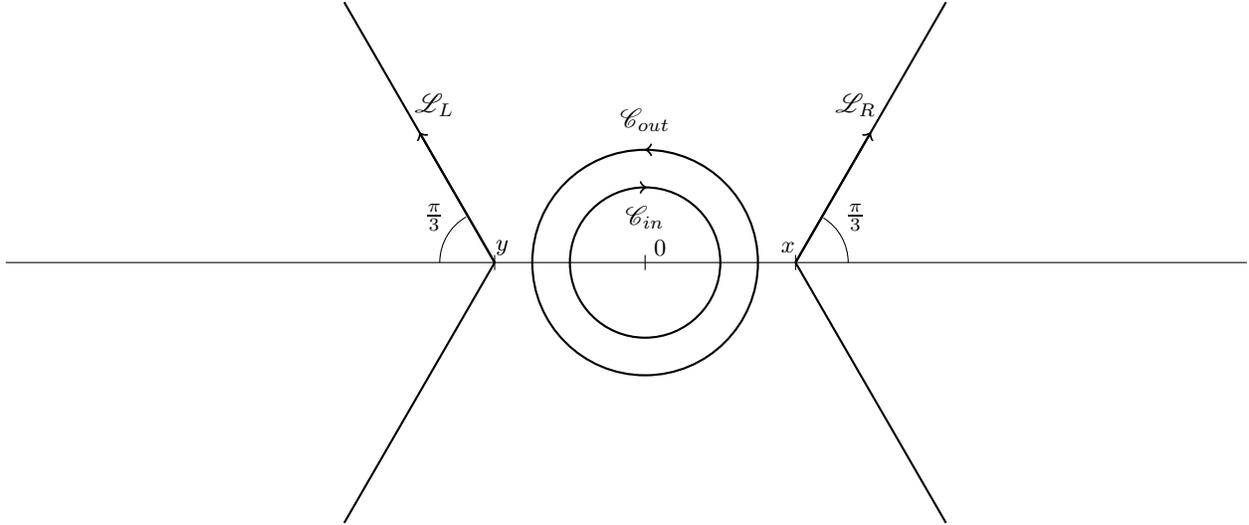
\begin{figure}[H]

\centering
\begin{tikzpicture}

\draw (-8.5,0) -- (8,0);
\draw (0,-0.1) -- (0,0.1);
\draw[thick] (2,0) --({2+4/2},{4*sqrt(3)/2}) ;
\draw[->,thick] (2,0) --({2+2/2},{2*sqrt(3)/2}) ;
\draw[thick]  (2,0) --({2+4/2},{-4*sqrt(3)/2}) ;
\draw[thick]  (-2,0) --({-2-4/2},{4*sqrt(3)/2}) ; 
\draw[->,thick] (-2,0)-- ({-2-2/2},{2*sqrt(3)/2});
\draw[thick]  (-2,0) --({-2-4/2},{-4*sqrt(3)/2}) ;
\draw[thick]  (0,0) circle (1.5cm);
\draw[thick]  (0,0) circle (1.0cm);
\draw[<-,thick] (0,1.5) -- (0.02,1.5);
\draw[->,thick] (0,1.0) -- (0.02,1.0);
\draw (-2.8,2.1) node {$\mathscr{L}_L$}; 
\draw (2.8,2.1) node {$\mathscr{L}_R$};
\draw (0,1.9) node {$\mathscr{C}_{out}$}; 
\draw (0,0.6) node {$\mathscr{C}_{in}$}; 
\draw (0.2,0.2) node {\small$0$}; 
\draw (2.0,-0.1) -- (2.0,0.1);
\draw (-2.0,-0.1) -- (-2.0,0.1);
\draw (1.9,0.2) node {\small$x$}; 
\draw (-1.9,0.2) node {\small$y$}; 
\draw (2.7,0) arc (0:58:0.7);
\draw (2.8,0.6) node {$\frac{\pi}{3}$};
\draw ({-2-0.38},{0.7*sqrt(3)/2}) arc (120:180:0.7);
\draw (-2.8,0.6) node {$\frac{\pi}{3}$};
\end{tikzpicture}
\caption{\label{figCuspContour}Integration contours for the Cusp-Airy kernel.}

\end{figure}
\end{Def}

In section \ref{sec:rAiry} we give a different formula for the kernel in terms of r-Airy integrals and some polynomials. 
When $r=s=0$ we see that we get the Airy kernel. Hence, we expect the last red particle on the line $r=0$ to have Tracy-Widom fluctuations.
In the case $r=s\neq 0$ we interestingly get the {\it r-Airy kernel}, which has appeared in previous work, see \cite{ADvM} and \cite{Peche}.


\begin{rem}\label{Cuspidal}
In \cite{OkRe}, Okounkov and Reshetikhin give a formula, without proof, for the correlation kernel in the
appropriate scaling limit around the type of cusp point studied in the present paper, but in a different model. The definition of the
kernel in \cite{OkRe} is somewhat formal due to the fact that the factor $\frac{1}{w-z}\frac{w^r}{z^s}$ in
formula (\ref{CAKernel}) above is interpreted via a ``time-ordered expansion'', see (13) in \cite{OkRe}. However,
for the contours used in their formula (18) these expansions are not convergent. Similarly, in our formula
(\ref{CAKernel}) above we can not expand $\frac{1}{w-z}$ in a power series when $z\in\mathscr{L}_L$ and $w\in\mathscr{L}_R$.
In this case it seems more natural to rewrite $\frac{1}{w-z}$ in a different way, see (\ref{IntTrick}) in section \ref{sec:rAiry} and
compare with the formulas derived there, see Proposition \ref{rAirycorrelationformula}.
\end{rem}

\subsection{Random Lozenge Tiling Model}
Consider three different types of {\em lozenges}
(rhombi with angles $\frac{\pi}3$ and $\frac{2\pi}3$) with sides of length
$1$. We label these as types Y, B, and R as shown in figure \ref{figRegHexLoz}.

\begin{figure}[H]
\centering
\begin{tikzpicture}[xscale=1/2,yscale=1/2]
\draw (8.5,{(1.5)*sqrt(3)}) \loza;
\draw (8.5,4.8) node {Y};
\draw (10.5,{(1.75)*sqrt(3)}) \lozb;
\draw (10.5,4.8) node {B};
\draw (12.5,{(1.75)*sqrt(3)}) \lozc;
\draw (13.5,4.8) node {R};
\end{tikzpicture}
\caption{Three types of lozenges with sides of length $1$.}
\label{figRegHexLoz}
\end{figure}
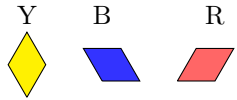

Consider a 'half-hexagon' as shown in figure \ref{HalfHex}.
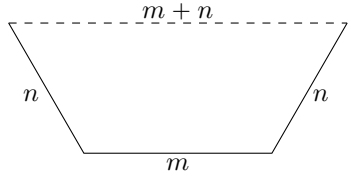
\begin{figure}[H]
\centering
\begin{tikzpicture}[xscale=1/2,yscale=1/2]

\draw (-13,{2*sqrt(3)})
--++ (2,{-2*sqrt(3)})
--++ (5,0)
--++ (2,{2*sqrt(3)});
\draw [dashed] (-13,{2*sqrt(3)}) --++ (9,0); 
\draw (-8.5,-.3) node {$m$};
\draw (-4.7,{sqrt(3)-.2}) node {$n$};
\draw (-8.5,{2*sqrt(3)+.3}) node {$m+n$};
\draw (-12.4,{sqrt(3)-.2}) node {$n$};
\end{tikzpicture}
\caption{A 'half-hexagon' of height $n$ and width $n+m$.}
\label{HalfHex}
\end{figure}
Suppose that we place $n$ lozenge tiles of type Y on the top line of the 'half-hexagon' according to figure \ref{figTopHalfHex}.

\begin{figure}[h]
\centering
\begin{tikzpicture}[xscale=1/4,yscale=1/4]
\draw (-2.5,{(4)*sqrt(3)}) -- (12.5,{(4)*sqrt(3)});
\draw (1,0) -- (8.9,0);
\draw (-3,{(4)*sqrt(3)}) -- (1,0);
\draw (8.9,0) -- (13,{(4)*sqrt(3)});
\draw (-2.5,{(3.5)*sqrt(3)}) \loza;
\draw (-1.5,{(3.5)*sqrt(3)}) \loza;
\draw (-.5,{(3.5)*sqrt(3)}) \loza;

\draw (3.5,{(3.5)*sqrt(3)}) \loza;

\draw (6.5,{(3.5)*sqrt(3)}) \loza;
\draw (7.5,{(3.5)*sqrt(3)}) \loza;

\draw (10.5,{(3.5)*sqrt(3)}) \loza;

\draw (12.5,{(3.5)*sqrt(3)}) \loza;

\draw (-2.5,{4*sqrt(3)}) circle (3pt);
\draw (-1.5,{4*sqrt(3)}) circle (3pt);
\draw (-.5,{4*sqrt(3)}) circle (3pt);
\draw (3.5,{4*sqrt(3)}) circle (3pt);
\draw (6.5,{4*sqrt(3)}) circle (3pt);
\draw (7.5,{4*sqrt(3)}) circle (3pt);
\draw (10.5,{4*sqrt(3)}) circle (3pt);
\draw (12.5,{4*sqrt(3)}) circle (3pt);

\end{tikzpicture}
\caption{$n$ tiles of type Y on the top line of the half-hexagon}
\label{figTopHalfHex}
\end{figure}
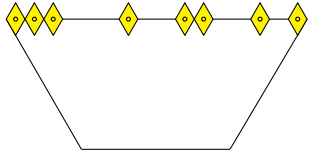
Fix this configuration of $n$ tiles of type Y on the top line, at say positions $\beta_1^{(n)},\beta_2^{(n)},....,\beta_n^{(n)}$, and consider all possible tessellations of the 'half-hexagon' with uniform probability distribution. A possible such tessellation is shown in figure \ref{figTilHalfHex}.

\begin{figure}[H]
\centering
\begin{tikzpicture}[xscale=1/3,yscale=1/3]
\draw (1,0) \lozb;
\draw (2,0) \lozb;
\draw (3,0) \lozb;
\draw (4,0) \loza;
\draw (4,0) \lozc;
\draw (5,0) \lozc;
\draw (6,0) \lozc;
\draw (7,0) \lozc;
\draw (8,0) \lozc;
\draw (.5,{sqrt(3)/2}) \lozb;
\draw (1.5,{sqrt(3)/2}) \loza;
\draw (1.5,{sqrt(3)/2}) \lozc;
\draw (2.5,{sqrt(3)/2}) \lozc;
\draw (4.5,{sqrt(3)/2}) \lozb;
\draw (5.5,{sqrt(3)/2}) \lozb;
\draw (6.5,{sqrt(3)/2}) \loza;
\draw (6.5,{sqrt(3)/2}) \lozc;
\draw (7.5,{sqrt(3)/2}) \lozc;
\draw (8.5,{sqrt(3)/2}) \lozc;
\draw (0,{sqrt(3)}) \lozb;
\draw (1,{sqrt(3)}) \loza;
\draw (2,{sqrt(3)}) \lozb;
\draw (3,{sqrt(3)}) \lozb;
\draw (4,{sqrt(3)}) \loza;
\draw (4,{sqrt(3)}) \lozc;
\draw (5,{sqrt(3)}) \lozc;
\draw (7,{sqrt(3)}) \lozb;
\draw (8,{sqrt(3)}) \loza;
\draw (8,{sqrt(3)}) \lozc;
\draw (9,{sqrt(3)}) \lozc;
\draw (-.5,{(1.5)*sqrt(3)}) \lozb;
\draw (.5,{(1.5)*sqrt(3)}) \loza;
\draw (1.5,{(1.5)*sqrt(3)}) \lozb;
\draw (2.5,{(1.5)*sqrt(3)}) \loza;
\draw (2.5,{(1.5)*sqrt(3)}) \lozc;
\draw (4.5,{(1.5)*sqrt(3)}) \lozb;
\draw (5.5,{(1.5)*sqrt(3)}) \lozb;
\draw (6.5,{(1.5)*sqrt(3)}) \lozb;
\draw (7.5,{(1.5)*sqrt(3)}) \loza;
\draw (8.5,{(1.5)*sqrt(3)}) \lozb;
\draw (9.5,{(1.5)*sqrt(3)}) \loza;
\draw (9.5,{(1.5)*sqrt(3)}) \lozc;
\draw (-1,{2*sqrt(3)}) \loza;
\draw (-1,{2*sqrt(3)}) \lozc;
\draw (1,{2*sqrt(3)}) \loza;
\draw (1,{2*sqrt(3)}) \lozc;
\draw (3,{2*sqrt(3)}) \lozb;
\draw (4,{2*sqrt(3)}) \lozb;
\draw (5,{2*sqrt(3)}) \loza;
\draw (5,{2*sqrt(3)}) \lozc;
\draw (6,{2*sqrt(3)}) \lozc;
\draw (8,{2*sqrt(3)}) \loza;
\draw (8,{2*sqrt(3)}) \lozc;
\draw (10,{2*sqrt(3)}) \loza;
\draw (10,{2*sqrt(3)}) \lozc;
\draw (-1.5,{(2.5)*sqrt(3)}) \loza;
\draw (-.5,{(2.5)*sqrt(3)}) \lozb;
\draw (.5,{(2.5)*sqrt(3)}) \loza;
\draw (1.5,{(2.5)*sqrt(3)}) \lozb;
\draw (2.5,{(2.5)*sqrt(3)}) \lozb;
\draw (3.5,{(2.5)*sqrt(3)}) \loza;
\draw (3.5,{(2.5)*sqrt(3)}) \lozc;
\draw (5.5,{(2.5)*sqrt(3)}) \lozb;
\draw (6.5,{(2.5)*sqrt(3)}) \loza;
\draw (6.5,{(2.5)*sqrt(3)}) \lozc;
\draw (8.5,{(2.5)*sqrt(3)}) \loza;
\draw (8.5,{(2.5)*sqrt(3)}) \lozc;
\draw (10.5,{(2.5)*sqrt(3)}) \lozb;
\draw (11.5,{(2.5)*sqrt(3)}) \loza;
\draw (-2,{3*sqrt(3)}) \loza;
\draw (-1,{3*sqrt(3)}) \loza;
\draw (-1,{3*sqrt(3)}) \lozc;
\draw (1,{3*sqrt(3)}) \loza;
\draw (1,{3*sqrt(3)}) \lozc;
\draw (2,{3*sqrt(3)}) \lozc;
\draw (4,{3*sqrt(3)}) \lozb;
\draw (5,{3*sqrt(3)}) \loza;
\draw (5,{3*sqrt(3)}) \lozc;
\draw (7,{3*sqrt(3)}) \loza;
\draw (7,{3*sqrt(3)}) \lozc;
\draw (9,{3*sqrt(3)}) \lozb;
\draw (10,{3*sqrt(3)}) \loza;
\draw (10,{3*sqrt(3)}) \lozc;
\draw (12,{3*sqrt(3)}) \loza;
\draw (-2.5,{(3.5)*sqrt(3)}) \loza;
\draw (-1.5,{(3.5)*sqrt(3)}) \loza;
\draw (-.5,{(3.5)*sqrt(3)}) \loza;
\draw (-.5,{(3.5)*sqrt(3)}) \lozc;
\draw (1.5,{(3.5)*sqrt(3)}) \lozb;
\draw (2.5,{(3.5)*sqrt(3)}) \lozb;
\draw (3.5,{(3.5)*sqrt(3)}) \loza;
\draw (3.5,{(3.5)*sqrt(3)}) \lozc;
\draw (5.5,{(3.5)*sqrt(3)}) \lozb;
\draw (6.5,{(3.5)*sqrt(3)}) \loza;
\draw (7.5,{(3.5)*sqrt(3)}) \loza;
\draw (7.5,{(3.5)*sqrt(3)}) \lozc;
\draw (8.5,{(3.5)*sqrt(3)}) \lozc;
\draw (10.5,{(3.5)*sqrt(3)}) \loza;
\draw (10.5,{(3.5)*sqrt(3)}) \lozc;
\draw (12.5,{(3.5)*sqrt(3)}) \loza;

\filldraw (4,{sqrt(3)/2}) circle (3pt);
\filldraw (1.5,{sqrt(3)}) circle (3pt);
\filldraw (6.5,{sqrt(3)}) circle (3pt);
\filldraw (1,{(1.5)*sqrt(3)}) circle (3pt);
\filldraw (4,{(1.5)*sqrt(3)}) circle (3pt);
\filldraw (8,{(1.5)*sqrt(3)}) circle (3pt);
\filldraw (.5,{2*sqrt(3)}) circle (3pt);
\filldraw (2.5,{2*sqrt(3)}) circle (3pt);
\filldraw (7.5,{2*sqrt(3)}) circle (3pt);
\filldraw (9.5,{2*sqrt(3)}) circle (3pt);
\filldraw (-1,{(2.5)*sqrt(3)}) circle (3pt);
\filldraw (1,{(2.5)*sqrt(3)}) circle (3pt);
\filldraw (5,{(2.5)*sqrt(3)}) circle (3pt);
\filldraw (8,{(2.5)*sqrt(3)}) circle (3pt);
\filldraw (10,{(2.5)*sqrt(3)}) circle (3pt);
\filldraw (-1.5,{3*sqrt(3)}) circle (3pt);
\filldraw (.5,{3*sqrt(3)}) circle (3pt);
\filldraw (3.5,{3*sqrt(3)}) circle (3pt);
\filldraw (6.5,{3*sqrt(3)}) circle (3pt);
\filldraw (8.5,{3*sqrt(3)}) circle (3pt);
\filldraw (11.5,{3*sqrt(3)}) circle (3pt);
\filldraw (-2,{(3.5)*sqrt(3)}) circle (3pt);
\filldraw (-1,{(3.5)*sqrt(3)}) circle (3pt);
\filldraw (1,{(3.5)*sqrt(3)}) circle (3pt);
\filldraw (5,{(3.5)*sqrt(3)}) circle (3pt);
\filldraw (7,{(3.5)*sqrt(3)}) circle (3pt);
\filldraw (10,{(3.5)*sqrt(3)}) circle (3pt);
\filldraw (12,{(3.5)*sqrt(3)}) circle (3pt);
\draw (-2.5,{4*sqrt(3)}) circle (3pt);
\draw (-1.5,{4*sqrt(3)}) circle (3pt);
\draw (-.5,{4*sqrt(3)}) circle (3pt);
\draw (3.5,{4*sqrt(3)}) circle (3pt);
\draw (6.5,{4*sqrt(3)}) circle (3pt);
\draw (7.5,{4*sqrt(3)}) circle (3pt);
\draw (10.5,{4*sqrt(3)}) circle (3pt);
\draw (12.5,{4*sqrt(3)}) circle (3pt);

\draw (4.5,-.4) node {$9$};
\draw (11.5,{2*sqrt(3)-.2}) node {$8$};
\draw (-2.4,{2*sqrt(3)-.2}) node {$8$};

\end{tikzpicture}
\caption{An example of a tiling and its equivalent interlaced particle configuration
when $n=8$ and $m=9$. The unfilled circles represent the
deterministic lozenges/particles.}
\label{figTilHalfHex}
\end{figure}
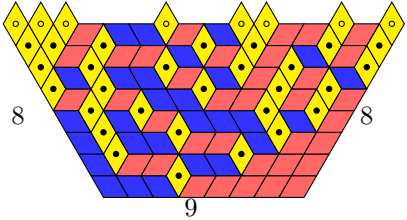
By considering the positions of the yellow tiles, such a tessellation can be encoded as an interlaced particle system. More precisely, let $y_i^{(r)}$ denote the position of the $i$:th particle on the $r$:th row. Then the particles on row $r+1$ will interlace with the particles on row $r$ according to  
\begin{align*}
y_1^{(r+1)}>y_1^{(r)}>y_2^{(r+1)}>y_2^{(r)}...>y_{r}^{(r)}>y_{r+1}^{(r+1)},
\end{align*}
for every $r=1,...,n-1$, where $y_i^{(n)}=\beta_i^{(n)}$. It will be convenient to make a coordinate transformation according to figure \ref{figRegHexLozTransform}. For more details see section 1.4 in \cite{Duse14a} and section 2.1 in \cite{Pet12} .
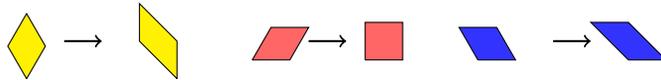
\begin{figure}[H]
\centering
\begin{tikzpicture}[xscale=1/2,yscale=1/2]
\draw (0-4,0) \loza;
\draw[thick,->] (-3,1) --(-2,1);
\draw (0,0) \lozyt;
\draw (2,0.5) \lozc;
\draw[thick,->] (3.5,1) --(4.5,1);
\draw (5,0.5) \lozrt;
\draw (8,0.5) \lozb;
\draw[thick,->] (10,1) --(11,1);
\draw (12,0.5) \lozbt;

\end{tikzpicture}
\caption{\label{figRegHexLozTransform}Coordinate transformation of lozenge tiles.}
\end{figure}

After the coordinate transformation, figure \ref{figTilHalfHex} becomes figure \ref{figTilHalfHexT}. Furthermore the interlacing condition between row $r+1$ and row $r$ has changed into
\begin{align*}
y_1^{(r+1)}\geq y_1^{(r)}>y_2^{(r+1)}\geq y_2^{(r)}...\geq y_{r}^{(r)}>y_{r+1}^{(r+1)}.
\end{align*}

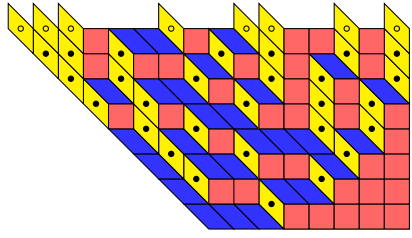
\begin{figure}[H]
\centering
\begin{tikzpicture}[xscale=1/3,yscale=1/3]
\draw (1,0) \lozbt;
\draw (2,0) \lozbt;
\draw (3,0) \lozbt;
\draw (4,0) \lozyt;
\draw (4,0) \lozrt;
\draw (5,0) \lozrt;
\draw (6,0) \lozrt;
\draw (7,0) \lozrt;
\draw (8,0) \lozrt;
\draw (0,1) \lozbt;
\draw (1,1) \lozyt;
\draw (1,1) \lozrt;
\draw (2,1) \lozrt;
\draw (4,1) \lozbt;
\draw (5,1) \lozbt;
\draw (6,1) \lozyt;
\draw (6,1) \lozrt;
\draw (7,1) \lozrt;
\draw (8,1) \lozrt;

\draw (-1,2) \lozbt;
\draw (0,2) \lozyt;
\draw (1,2) \lozbt;
\draw (2,2) \lozbt;
\draw (3,2) \lozyt;
\draw (3,2) \lozrt;
\draw (4,2) \lozrt;
\draw (6,2) \lozbt;
\draw (7,2) \lozyt;
\draw (7,2) \lozrt;
\draw (8,2) \lozrt;
\draw (-2,3) \lozbt;
\draw (-1,3) \lozyt;
\draw (0,3) \lozbt;
\draw (1,3) \lozyt;
\draw (1,3) \lozrt;
\draw (3,3) \lozbt;
\draw (4,3) \lozbt;
\draw (5,3) \lozbt;
\draw (6,3) \lozyt;
\draw (7,3) \lozbt;
\draw (8,3) \lozyt;
\draw (8,3) \lozrt;
\draw (-3,4) \lozyt;
\draw (-3,4) \lozrt;
\draw (-1,4) \lozyt;
\draw (-1,4) \lozrt;
\draw (1,4) \lozbt;
\draw (2,4) \lozbt;
\draw (3,4) \lozyt;
\draw (3,4) \lozrt;
\draw (4,4) \lozrt;
\draw (6,4) \lozyt;
\draw (6,4) \lozrt;
\draw (8,4) \lozyt;
\draw (8,4) \lozrt;
\draw (-4,5) \lozyt;
\draw (-3,5) \lozbt;
\draw (-2,5) \lozyt;
\draw (-1,5) \lozbt;
\draw (0,5) \lozbt;
\draw (1,5) \lozyt;
\draw (1,5) \lozrt;
\draw (3,5) \lozbt;
\draw (4,5) \lozyt;
\draw (4,5) \lozrt;
\draw (6,5) \lozyt;
\draw (6,5) \lozrt;
\draw (8,5) \lozbt;
\draw (9,5) \lozyt;
\draw (-5,6) \lozyt;
\draw (-4,6) \lozyt;
\draw (-4,6) \lozrt;
\draw (-2,6) \lozyt;
\draw (-2,6) \lozrt;
\draw (-1,6) \lozrt;
\draw (1,6) \lozbt;
\draw (2,6) \lozyt;
\draw (2,6) \lozrt;
\draw (4,6) \lozyt;
\draw (4,6) \lozrt;
\draw (6,6) \lozbt;
\draw (7,6) \lozyt;
\draw (7,6) \lozrt;
\draw (9,6) \lozyt;
\draw (-6,7) \lozyt;
\draw (-5,7) \lozyt;
\draw (-4,7) \lozyt;
\draw (-4,7) \lozrt;
\draw (-2,7) \lozbt;
\draw (-1,7) \lozbt;
\draw (0,7) \lozyt;
\draw (0,7) \lozrt;
\draw (2,7) \lozbt;
\draw (3,7) \lozyt;
\draw (4,7) \lozyt;
\draw (4,7) \lozrt;
\draw (5,7) \lozrt;
\draw (7,7) \lozyt;
\draw (7,7) \lozrt;
\draw (9,7) \lozyt;

\filldraw (3.5,1)  circle (3pt);


\filldraw (0.5,2) circle (3pt);
\filldraw (5.5,2) circle (3pt);


\filldraw (-0.5,3) circle (3pt);

\filldraw (2.5,3) circle (3pt);

\filldraw (6.5,3) circle (3pt);

\filldraw (-1.5,4) circle (3pt);

\filldraw (0.5,4) circle (3pt);

\filldraw (5.5,4) circle (3pt);

\filldraw (7.5,4) circle (3pt);
\filldraw (-3.5,5) circle (3pt);

\filldraw (-1.5,5) circle (3pt);

\filldraw (2.5,5) circle (3pt);

\filldraw (5.5,5) circle (3pt);

\filldraw (7.5,5) circle (3pt);

\filldraw (-4.5,6) circle (3pt);

\filldraw (-2.5,6) circle (3pt);

\filldraw (0.5,6) circle (3pt);

\filldraw (3.5,6) circle (3pt);

\filldraw (5.5,6) circle (3pt);

\filldraw (8.5,6) circle (3pt);

\filldraw (-5.5,7) circle (3pt);

\filldraw (-4.5,7) circle (3pt);

\filldraw (-2.5,7) circle (3pt);

\filldraw (1.5,7) circle (3pt);

\filldraw (3.5,7) circle (3pt);

\filldraw (6.5,7) circle (3pt);

\filldraw (8.5,7) circle (3pt);
\draw (-6.5,8) circle (3pt);

\draw (-5.5,8) circle (3pt);

\draw (-4.5,8) circle (3pt);

\draw (-0.5,8) circle (3pt);

\draw (2.5,8) circle (3pt);

\draw (3.5,8) circle (3pt);

\draw (6.5,8) circle (3pt);

\draw (8.5,8) circle (3pt);

\end{tikzpicture}
\caption{\label{figTilHalfHexT}An example tiling and its equivalent interlaced particle configuration
after the coordinate transformation. The unfilled circles represent the
deterministic lozenges/particles.}
\end{figure}

\subsection{Interlacing Model}
We begin by briefly recalling the underlying probabilistic model described in \cite{Duse14a}.
A {\em discrete Gelfand-Tsetlin pattern} of depth $n$ is an $n$-tuple, denoted
$(y^{(1)},y^{(2)},\ldots,y^{(n)}) \in  \Z \times \Z^2 \times \cdots \times \Z^n$,
which satisfies the interlacing constraint
\begin{equation*}
y_1^{(r+1)} \; \ge \; y_1^{(r)} \; > \; y_2^{(r+1)} \; \ge \; y_2^{(r)}
\; > \cdots \ge \; y_r^{(r)} \; > \; y_{r+1}^{(r+1)},
\end{equation*}
denoted $y^{(r+1)} \succ y^{(r)}$, for all $r \in \{1,\ldots,n-1\}$.
For each $n\ge1$, fix $\beta^{(n)} \in \Z^n$ with $\beta_1^{(n)} > \beta_2^{(n)} > \cdots > \beta_n^{(n)}$,
and consider the following probability measure on the set of patterns of depth $n$:
\begin{equation*}
q_n(y^{(1)},\ldots,y^{(n)})
:= \frac1{Z_n} \cdot \left\{
\begin{array}{rcl}
1 & ; &
\text{when} \; \beta^{(n)} = y^{(n)} \succ y^{(n-1)} \succ \cdots \succ y^{(1)}, \\
0 & ; & \text{otherwise},
\end{array}
\right.
\end{equation*}
where $Z_n > 0$ is a normalisation constant. This can equivalently
be considered as a measure on configurations of interlaced particles in
$\Z \times \{1,\ldots,n\}$ by placing a particle at position
$(x,r) \in \Z \times \{1,\ldots,n\}$ whenever $x$ is an element of $y^{(r)}$. The measure
$q_n$ is then the uniform probability measure on the set of all such interlaced
configurations with the particles on the top row in the deterministic positions
defined by $\beta^{(n)}$. This measure also arises naturally from tiling
models as was indicated above.
In \cite{Duse14a} and \cite{Pet12} it was shown that this process is
determinantal. Note that the fixed top row and the
interlacing constraint implies that it is sufficient to restrict to those positions,
$(x_1,y_1), (x_2,y_2) \in \Z \times \{1,\ldots,n-1\}$, with $x_1 \ge \beta_n^{(n)}+n-y_1$ and
$x_2 \ge \beta_n^{(n)}+n-y_2$. For all such $(x_1,y_1)$ and $(x_2,y_2)$,  we give an integral representation of the correlation kernel $K_n((x_1,y_1),(x_2,y_2))$
in section \ref{sec:interlacingkernel}.

In terms of tiling models, $K_n((x_1,y_1)(x_2,y_2))$ is equal to a correlation kernel for the yellow particles $K_{\mathcal{Y}}^{(n)}((x_1,y_1),(x_2,y_2))$. However at the cusp one should not consider the correlation kernel of the yellow particles, but the correlation kernel of the red particles instead. The correlation kernels of the different particle species are related according to Lemma \ref{Lem:RedBlue} below, which we will prove in sec. 4.2.

\begin{Prop}
\label{Transform}
The red tiles (particles) form a determinantal point process with correlation kernel
\begin{align} 
\label{Red}
K_{\mathcal{R}}^{(n)}((x_1,y_1),(x_2,y_2))&=-1_{x_1<x_2}\frac{1}{(2\pi i)^2}\oint_{\mathscr{Z}_n}dz\oint_{\mathscr{W}_n}dw  \frac{\prod_{k=x_2+y_2-n}^{x_2-1}(z-k)}{\prod_{\substack{k=x_1+y_1-n}}^{x_1}(w-k)}\frac{1}{w-z}\prod_{i=1}^n\bigg(\frac{w-\beta_i^{(n)}}{z-\beta_i^{(n)}}\bigg) \\&
+1_{x_1\geq x_2}\frac{1}{(2\pi i)^2}\oint_{\mathscr{Z}'_n}dz\oint_{\mathscr{W}_n}dw  \frac{\prod_{k=x_2+y_2-n}^{x_2-1}(z-k)}{\prod_{\substack{k=x_1+y_1-n}}^{x_1}(w-k)}\frac{1}{w-z}\prod_{i=1}^n\bigg(\frac{w-\beta_i^{(n)}}{z-\beta_i^{(n)}}\bigg),\nonumber
\end{align}
where $\mathscr{Z}_n$ is a counterclockwise oriented contour containing $\{\beta_j^{(n)}: \beta_j^{(n)}\geq x_2\}$ but not the set $\{\beta_j^{(n)}\leq x_2-1\}$, and $\mathscr{Z}'_n$ is a counterclockwise oriented contour containing $\{\beta_j^{(n)}: \beta_j^{(n)}<x_2\}$ but not the set $\{\beta_j^{(n)}\geq x_2+1\}$. In addition, $\mathscr{W}_n$ contains the set 
$\{x_1+y_1-n,...,x_1\}$ and $\mathscr{Z}_n$ or $\mathscr{Z}'_n$. The integration contours are shown in figure \ref{figIntKernelContour1}. Here $A_n=\min_j\{\beta_j^{(n)}:j=1,...,n\}$ and $B_n=\max_j\{\beta_j^{(n)}:j=1,...,n\}$.
\end{Prop}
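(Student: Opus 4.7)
The plan is to derive formula (\ref{Red}) by combining the red-to-yellow correlation identity of Lemma \ref{Lem:RedBlue} with the integral representation of the yellow correlation kernel $K_n^{\mathcal{Y}}$ developed in Section \ref{sec:interlacingkernel}. The geometric fact that a red tile at a given lattice position is determined by a finite local pattern of yellow tiles translates, via Lemma \ref{Lem:RedBlue}, into an explicit finite linear combination expressing $K_{\mathcal{R}}^{(n)}$ in terms of $K_n^{\mathcal{Y}}$ evaluated at shifted arguments, together with a diagonal indicator correction. This simultaneously shows that the red process is itself determinantal and reduces the task to a concrete manipulation of contour integrals.

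Substituting the double-contour formula for $K_n^{\mathcal{Y}}$ into this identity, the argument shifts act on the index ranges of the factors $\prod_k(z-k)$ and $\prod_k(w-k)$ in the integrand. After the linear combination is collected and the products telescope, the ranges collapse to $\prod_{k=x_2+y_2-n}^{x_2-1}(z-k)$ in the numerator and $\prod_{k=x_1+y_1-n}^{x_1}(w-k)$ in the denominator, matching the integrand in (\ref{Red}). The factor $1/(w-z)$ and the ratio $\prod_i(w-\beta_i^{(n)})/(z-\beta_i^{(n)})$ are unaffected, since the shifts act only on the free $k$-indices and not on the deterministic top-row positions $\beta_i^{(n)}$.

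The remaining step, and in my view the main technical obstacle, is to present the result in the stated piecewise form. When $x_1 < x_2$ one uses the contour $\mathscr{Z}_n$ encircling $\{\beta_j^{(n)}\geq x_2\}$, while when $x_1 \geq x_2$ one uses the complementary contour $\mathscr{Z}'_n$ encircling $\{\beta_j^{(n)} < x_2\}$. These two contours differ by the full sum of residues at the points $\beta_j^{(n)}$, and one must verify that this residue sum is exactly what is required to absorb the diagonal correction coming from Lemma \ref{Lem:RedBlue} together with the indicator term already present in the formula for $K_n^{\mathcal{Y}}$. Performing this residue computation and checking that the polynomial coefficient of the indicator $\mathbf{1}[x_1 \geq x_2]$ matches precisely what the change of contour produces is the delicate bookkeeping step; once it is confirmed, the constraint that $\mathscr{W}_n$ encloses both $\{x_1+y_1-n,\ldots,x_1\}$ and the chosen $z$-contour ensures that the pole at $w=z$ is picked up with the correct orientation and that no extraneous residues appear, completing the derivation.
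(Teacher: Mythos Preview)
Your high-level plan---combine Lemma \ref{Lem:RedBlue} with the integral representation of the yellow kernel from Section \ref{sec:interlacingkernel}---is exactly the paper's approach. However, you have significantly misread the content of Lemma \ref{Lem:RedBlue} and, as a consequence, invented obstacles that are not there.

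Lemma \ref{Lem:RedBlue} does \emph{not} produce a finite linear combination with a diagonal indicator correction; it is the single identity
\[
K_{\mathcal{R}}^{(n)}((x_1,y_1),(x_2,y_2)) = -K_{\mathcal{Y}}^{(n)}((x_1,y_1),(x_2,y_2-1)).
\]
There is no telescoping to perform: substituting $y_2\mapsto y_2-1$ into Proposition \ref{propYellowkenerl} shifts the lower endpoint of the $z$-product from $x_2+y_2-n+1$ to $x_2+y_2-n$, the overall sign flips, and that is all. Moreover, what you call the ``main technical obstacle''---a residue computation to reconcile the $\mathscr{Z}_n$ versus $\mathscr{Z}'_n$ contours with an indicator correction---simply does not arise. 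The piecewise structure $1_{x_1<x_2}\cdot\mathscr{Z}_n$ versus $1_{x_1\geq x_2}\cdot\mathscr{Z}'_n$ is already present in the formula (\ref{Yellowkernel}) for $K_{\mathcal{Y}}^{(n)}$ and is inherited verbatim; no residue matching is needed. The only remaining step in the paper is to drop the conjugating prefactor $(n-y_1)!/(n-y_2)!$, which is harmless for a correlation kernel. So your plan is right, but the execution you describe would have you chasing phantom cancellations.
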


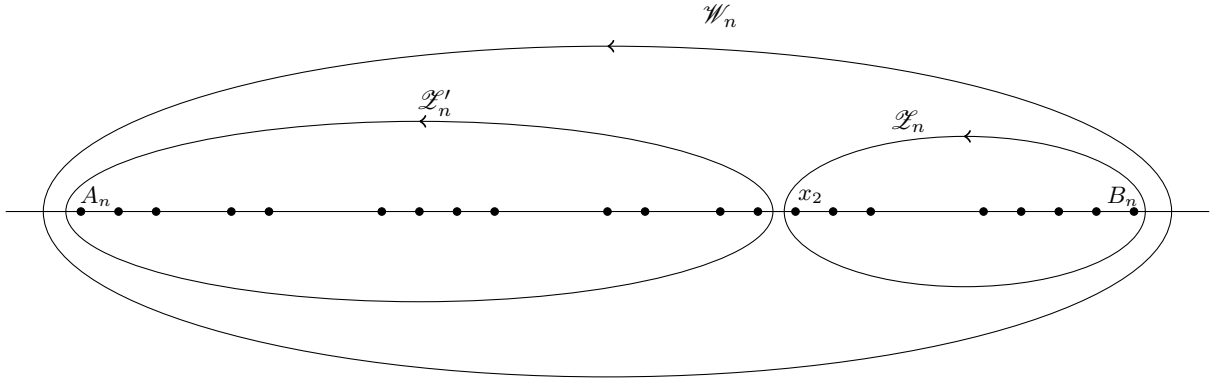
\begin{figure}[H]
\centering
\begin{tikzpicture}

\draw (-3.5,0) ellipse (4.7cm and 1.2cm);
\draw (3.75,0) ellipse (2.4cm and 1.0cm);
\draw (-1,0) ellipse (7.5cm and 2.2cm);
\draw(-9,0) --++(16,0);

\draw (-7.8,0.2) node {\small$A_n$};
\filldraw[fill=black](-8,0) circle (0.05 cm);
\filldraw[fill=black](-7.5,0) circle (0.05 cm);
\filldraw[fill=black](-7,0) circle (0.05 cm);
\filldraw[fill=black](-6,0) circle (0.05 cm);
\filldraw[fill=black](-5.5,0) circle (0.05 cm);
\filldraw[fill=black](-4,0) circle (0.05 cm);
\filldraw[fill=black](-3.5,0) circle (0.05 cm);
\filldraw[fill=black](-3,0) circle (0.05 cm);
\filldraw[fill=black](-2.5,0) circle (0.05 cm);
\filldraw[fill=black](-1,0) circle (0.05 cm);
\filldraw[fill=black](-0.5,0) circle (0.05 cm);
\filldraw[fill=black](0.5,0) circle (0.05 cm);
\filldraw[fill=black](1.0,0) circle (0.05 cm);
\filldraw[fill=black](1.5,0) circle (0.05 cm);
\filldraw[fill=black](2.0,0) circle (0.05 cm);
\filldraw[fill=black](2.5,0) circle (0.05 cm);
\filldraw[fill=black](4.0,0) circle (0.05 cm);
\filldraw[fill=black](4.5,0) circle (0.05 cm);
\filldraw[fill=black](5.0,0) circle (0.05 cm);
\filldraw[fill=black](5.5,0) circle (0.05 cm);
\filldraw[fill=black](6.0,0) circle (0.05 cm);
\draw (5.85,0.2) node {\small$B_n$};
\draw (-3.3,1.45) node {$\mathscr{Z}'_n$};
\draw (1.7,0.2) node {\small$x_2$};
\draw (3.0,1.2) node {$\mathscr{Z}_n$};
\draw (0.5,2.6) node {$\mathscr{W}_n$};
\draw[<-,thick] (-3.5, 1.2) -- (-3.49,1.2);
\draw[<-,thick] (-1, 2.2) -- (-0.99,2.2);
\draw[<-,thick] (3.74, 1.0) -- (3.75,1.0);

\end{tikzpicture}
\caption{Integration contours}
\label{figIntKernelContour1}
\end{figure}

\subsection{Asymptotic Geometry of Discrete Interlaced Patterns}

It is natural to consider the asymptotic behaviour of the determinantal process
introduced in the previous section as $n \to \infty$, under the assumption
that the (rescaled) empirical distribution of the fixed particles
on the top row converges weakly to a measure with compact support. More exactly:
\begin{Assump}
\label{A1}
Assume that
\begin{equation*}
\mu_n:=\frac1n \sum_{i=1}^n \delta_{\beta_i^{(n)}/n} \rightharpoonup \mu
\end{equation*}
as $n \to \infty$, in the sense of weak convergence of measures, where $\mu$ is a positive Borel measure on $\R$.
\end{Assump}

We see that $\mu\leq \lambda$ where $\lambda$ is Lebesgue measure
(recall $\beta^{(n)} \in \Z^n$), $\Vert \mu\Vert=1$, $\mu$ has compact support, and
$b-a>1$ where $[a,b]$ is the convex hull of $\supp(\mu)$. We write
$\mu \in \mathcal{M}_{c,1}^{\lambda}(\R)$. Additionally we note that $\mu$
admits a density w.r.t. $\lambda$, which is uniquely defined up to a set of
zero Lebesgue measure. Denoting the density by $\varphi$, it satisfies
$\varphi \in L^{\infty}(\R)$, $\varphi(x) = 0$ for all $x \in \R \setminus [a,b]$,
and $0 \le \varphi(x) \le 1$ for all $x \in [a,b]$. We write
$\varphi \in \rho_{c,1}^{\lambda}(\R)$. Note that
$\R \setminus \supp(\mu)$ is the largest open set on which $\varphi=0$ almost
everywhere, and $\R \setminus \supp(\lambda-\mu)$ is the largest open
set on which $\varphi=1$ almost everywhere.

Note that, rescaling the vertical and horizontal positions of the particles
of the Gelfand-Tsetlin patterns by $\frac1n$, the above assumption
and the interlacing constraint imply that the rescaled particles lie asymptotically in the the following set:
\begin{align}
\label{Polygon}
\mathcal{P}=\{(\chi,\eta)\in\R^2:a\leq \chi+\eta-1\leq \chi\leq b,0\leq\eta\leq 1\}.
\end{align} 
Fixing $(\chi,\eta) \in \mathcal{P}$, the local asymptotic behaviour of particles
near $(\chi,\eta)$ can be examined by considering the asymptotic behaviour of
$K_n((x_1^{(n)},y_1^{(n)}),(x_2^{(n)},y_2^{(n)}))$ as $n \to \infty$, where $\{(x_1^{(n)},y_1^{(n)})\}_{n\ge1} \subset \Z^2$
and $\{(x_2^{(n)},y_2^{(n)})\}_{n\ge1} \subset \Z^2$ satisfy
$
\frac{1}{n} (x_j^{(n)},y_j^{(n)})\to (\chi,\eta)
$
as $n \to \infty$, $j=1,2$. Assume this asymptotic behaviour, substitute $(x_1^{(n)},y_1^{(n)})$ and
$(x_2^{(n)},y_2^{(n)})$ into the expression (\ref{Red}) for the correlation kernel, and rescale the contours by $\frac1n$
to get,
\begin{align}
\label{eqJnrnunsnvn1}
K_{\mathcal{R}}^{(n)}((x_1^{(n)},y_1^{(n)}),(x_2^{(n)},y_2^{(n)}))
&= -1_{x_1<x_2}\frac{1}{(2\pi i)^2} \oint_{\frac{1}{n}\mathscr{Z}_n} dz \oint_{\frac{1}{n}\mathscr{W}_n} dw \;
\frac{\exp(n f_n(w) - n \tilde{f}_n(z))}{w-z}\nonumber\\
&+1_{x_1\geq x_2}\frac{1}{(2\pi i)^2} \oint_{\frac{1}{n}\mathscr{Z}_n'} dz \oint_{\frac{1}{n}\mathscr{W}_n} dw \;
\frac{\exp(n f_n(w) - n \tilde{f}_n(z))}{w-z},
\end{align}
for all $n \in \N$. Here,
\begin{align*}
f_n(w)
& := \frac1n \sum_{i=1}^n \log \bigg( w - \frac{\beta_i^{(n)}}n \bigg) -
\frac1n \sum_{j=x_1^{(n)}+y_1^{(n)}-n}^{x_1^{(n)}} \log \left( w - \frac{j}n \right), \\
\tilde{f}_n(z)
& := \frac1n \sum_{i=1}^n \log \bigg( z - \frac{\beta_i^{(n)}}n \bigg) -
\frac1n \sum_{j=x_2^{(n)}+y_2^{(n)}-n+1}^{x_2^{(n)} -1} \log \left( z - \frac{j}n \right).
\end{align*}
These expressions and assumption 1 leads us to define
\begin{equation}
\label{eqasymptoticFunction}
f(w;\chi,\eta) := \int_{\R}\log(w-t)d\mu(t)-\int_{\chi+\eta-1}^{\chi}\log(w-t)dt,
\end{equation}
for all $w \in \C \setminus \R$.
Steepest descent analysis and equation 
(\ref{eqJnrnunsnvn1}) suggest that, as $n \to \infty$,
the asymptotic behaviour of $ K_{\mathcal{R}}^{(n)}((x_1^{(n)},y_1^{(n)}),(x_2^{(n)},y_2^{(n)}))$ depends on the
behaviour of the roots of 
\begin{equation}
\label{eqf'}
f' (w;\chi,\eta)
:=\frac{d f}{dw}= \int_{\R}\frac{d\mu(t)}{w-t}-\int_{\chi+\eta-1}^{\chi}\frac{dt}{w-t},
\end{equation}
for $w \in \C \setminus \R$. In \cite{Duse14a}, we define the {\em liquid region},
$\LL$, as the set of all $(\chi,\eta) \in \mathcal{P}$ for which
$f_{(\chi,\eta)}'$ has a unique root in the upper-half plane
$\Hp := \{w \in \C: \text{Im}(w) > 0 \}$. Whenever $(\chi,\eta) \in \LL$, one
expects universal bulk asymptotic behaviour, i.e., that the local
asymptotic behaviour of the particles near $(\chi,\eta)$ are governed by
the  \emph{extended discrete sine kernel} as $n\to +\infty$. Also, one expects
that the particles are not asymptotically densely packed. Moreover, when
considering the corresponding tiling model and its associated height function,
one would expect to see the Gaussian Free Field asymptotically.
For a special case see \cite{Pet15}.

Let $W_\LL:\LL\to\Hp$ map $(\chi,\eta) \in \LL$ to the corresponding unique root
of $f_{(\chi,\eta)}'$ in $\Hp$. In \cite{Duse14a}, we show that $W_\LL$ is a
homeomorphism with inverse $W_\LL^{-1} (w) = (\chi_\LL(w), \eta_\LL(w))$ for all
$w \in \Hp$, where
\begin{align}
\label{InverseReal1}
\chi_\LL(w) &:= w + \frac{(w - \bar{w}) (e^{C(\bar{w})}-1)}{e^{C(w)} - e^{C(\bar{w})}}, \\
\label{InverseReal2}
\eta_\LL(w) &:=
1 + \frac{(w - \bar{w}) (e^{C(w)}-1) (e^{C(\bar{w})}-1) }{e^{C(w)} - e^{C(\bar{w})}},
\end{align}
and $C : \C \setminus \supp(\mu) \to \C$ is the {\em Cauchy} transform of $\mu$:
\begin{align}
\label{eqCauTrans}
C(w):=\int_{\R}\frac{d\mu(t)}{w-t},
\end{align}
Thus $\LL$ is a non-empty, open (w.r.t to $\R^2$), simply connected subset of $\mathcal{P}$.
\newline \newline 

In \cite{Duse14a} a subset of $\dv\mathcal{L}$ called \emph{the edge} $\mathcal{E}$ was determined and its geometry classified. More precisely, the boundary behavior of $W_{\mathcal{L}}^{-1}$ for the open subset $R\subset\dv\Hp=\R$ was studied, where 
\begin{equation}
\label{eqR}
R
:= (\R \setminus \supp(\mu)) \cup (\R \setminus \supp(\lambda-\mu)) \cup R_1 \cup R_2
= R_\mu \cup R_{\lambda-\mu} \cup R_0 \cup R_1 \cup R_2,
\end{equation}
where
\begin{itemize}
\item
$R_\mu := \{t \in \R \setminus \supp(\mu) : C(t) \neq 0\}$.
\item
$R_{\lambda-\mu} := \R \setminus \supp(\lambda-\mu)$.
\item
$R_0 := \{t \in \R \setminus \supp(\mu) : C(t) = 0\}$.
\item
$R_1$ is the set of all
$t \in \partial (\R \setminus \supp(\mu)) \cap \partial (\R \setminus \supp(\lambda-\mu))$
for which there exists an $\epsilon>0$ such that $(t,t+\epsilon) \subset \R \setminus \supp(\mu)$ and $(t-\epsilon,t) \subset \R \setminus \supp(\lambda-\mu)$.
\item
$R_2$ is the set of all
$t \in \partial (\R \setminus \supp(\mu)) \cap \partial (\R \setminus \supp(\lambda-\mu))$
for which there exists an $\epsilon>0$ such that $(t,t+\epsilon)
\subset \R \setminus \supp(\lambda-\mu)$ and $(t-\epsilon,t) \subset \R \setminus \supp(\mu)$.
\end{itemize}
Note that $R_1 \cap R_2 = \emptyset$. $R_1 \cup R_2
:= \partial (\R \setminus \supp(\mu)) \cap \partial (\R \setminus \supp(\lambda-\mu))$, the
set of all common boundary points of the disjoint open sets $\R \setminus \supp(\mu)$
and $\R \setminus \supp(\lambda-\mu)$. Also $R_1 \cup R_2$ is a discrete subset of $[a,b]$. In words, $R_\mu\cup R_0$ is the interior of the set where the density $\varphi(x)=0$ almost everywhere, and $R_{\lambda-\mu}$ is the interior of the set where $\varphi(x)=1$ almost everywhere. We see that $R_1$ are the jumps from $1$ to $0$ and
$R_2$ are the jumps from $0$ to $1$.

\begin{Def}
Let $t^*\in R_2$. We set
\begin{align}
\label{T1a}
 t_1:=\sup\{t\in \R: (t^*,t)\subset R_{\lambda-\mu}\}
\end{align}
and
\begin{align}
 \label{T2a}
 t_2:=\inf\{t\in \R: (t,t^*)\subset R_{\mu}\cup R_0\}.
\end{align}
Then in particular $\varphi(t)=0$ for all $t\in(t_2,t^*)$ and $\varphi(t)=1$ for all $t\in(t^*,t_1)$. If $t=t^*\in R_1$ we interchange $R_{\lambda-\mu}$ and $R_{\mu}\cup R_0$
in (\ref{T1a}) and (\ref{T2a})
\end{Def}

It was shown in \cite{Duse14a} that by considering a sequence $\{w_n\}_n\in \Hp$ such that $\lim_{n\rightarrow +\infty}w_n=t\in\R=\partial\Hp$ one gets a parametrization of the edge $\EE$,
\begin{align}
\label{Chi0}
\lim_{n\to\infty}\chi(w_n)&=t+\frac{1-e^{-C(t)}}{C'(t)}:=\chi_{\EE}(t)\\
\label{Eta0}
\lim_{n\to\infty}\eta(w_n)&=1+\frac{e^{C(t)}+e^{-C(t)}-2}{C'(t)}:=\eta_{\EE}(t)
\end{align}
for $t\in R_{\mu}$ and
\begin{align}
\label{Chi1}
\lim_{n\to\infty}\chi(w_n)&=t + \frac{ 1-(\frac{t-t_1}{t-t_2}) e^{-C_I(t)}}
{C_I'(t) + \frac{1}{t-t_2} - \frac{1}{t-t_1}}:=\chi_{\EE}(t)\\
\label{Eta1}
\lim_{n\to\infty}\eta(w_n)&=1 + \frac{(\frac{t-t_2}{t-t_1})e^{C_I(t)} +(\frac{t-t_1}{t-t_2})e^{-C_I(t)} }
{C_I'(t) + \frac{1}{t-t_2} - \frac{1}{t-t_1}}:=\eta_{\EE}(t)
\end{align}
for $t\in R_{\lambda- \mu}$, where $C_I(t)=\int_{\R\backslash I}\frac{d\mu(x)}{t-x}$, and $I$ is any open interval such that $\mu\big\vert_I=\lambda$ and $t\in I$. Moreover the collection of these smooth parametrized curves has analytic extensions across the set $R_1\cup R_2$. In particular,
\begin{align}
\label{EqCusp1}
(\chi_{\mathcal{E}}(t),\eta_{\mathcal{E}}(t))=(t,1-(t-t_2)e^{C_I(t)})
\end{align}
for $t\in R_1$ with $I=(t_2,t)$, and
\begin{align}
\label{EqCusp2}
(\chi_{\mathcal{E}}(t),\eta_{\mathcal{E}}(t))=(t+(t_1-t)e^{-C_I(t)},1-(t_1-t)e^{-C_I(t)})
\end{align}
for $t\in R_2$, with $I=(t,t_1)$. 
Finally, it is proven in \cite{Duse14a} that this gives a bijection
$W_{\mathcal{E}}:\mathcal{E}\to R$, with inverse $W_{\mathcal{E}}^{-1}(t)=(\chi_{\mathcal{E}}(t),\eta_{\mathcal{E}}(t))$, where $\chi_{\mathcal{E}}(t)$ and $\eta_{\mathcal{E}}(t)$ are real analytic functions. \newline \newline
The importance of the \emph{edge} $\mathcal{E}$ is that one expects universal edge fluctuations at $\mathcal{E}$. In particular one expects that the local asymptotics in a neighborhood of a generic point of $\mathcal{E}$ is either given by the $Airy$ kernel or the $Id-Airy$ kernel. For a special case see \cite{Pet12}, and more generally \cite{Duse15c}. In this paper we will consider certain singular points on the curve $\mathcal{E}$. At these points the curve will have a cusp. Typically one would expect that the local fluctuations at these points in the limit $n\to \infty$ is a \emph{Pearcey} process. However, for the situations considered in this paper this will not be the case. In fact we will show that at these points one gets the \emph{Cusp-Airy process} given by the kernel (\ref{CAKernel}).

\subsection{Conditions for Cusps}
One can rewrite the derivative given  by (\ref{eqf'}) as
\begin{align}
\label{AsympFunc1}
f'(w;\chi,\eta)=\int_{a}^{\chi+\eta-1}\frac{d\mu(x)}{w-x}-\int_{\chi+\eta-1}^{\chi}\frac{d(\lambda-\mu)(x)}{w-x}+\int_{\chi}^{b}\frac{d\mu(x)}{w-x}.
\end{align}
This implies that the function $f'(w;\chi,\eta)$ has an analytic extension to the set \newline$\C\backslash (\text{supp}(\mu)\big\vert_{[a,\chi+\eta-1,\chi]}\bigcup \text{supp}(\lambda-\mu)\big\vert_{[\chi+\eta-1,\chi]}\bigcup \text{supp}(\mu)\big\vert_{[\chi,b]})$. It is shown in Lemma 2.6 in \cite{Duse14a} that 
the \emph{edge,} $\EE$, is the disjoint union
$\EE := \EE_\mu \cup \EE_{\lambda-\mu} \cup \EE_0 \cup \EE_1 \cup \EE_2$, where
\begin{itemize}
\item
$\EE_\mu$ is the set of all $(\chi,\eta) \in\mathcal{P}$
for which $f'$ has a repeated root in $\R \setminus [\chi+\eta-1,\chi]$.
\item
$\EE_{\lambda-\mu}$ is the set of all $(\chi,\eta)$ for which $f'$ has a
repeated root in $(\chi+\eta-1,\chi)$.
\item
$\EE_0$ is the set of all $(\chi,\eta)$ for which $\eta=1$ and $f'$ has a root
at $\chi \; (=\chi+\eta-1)$. In particular, $\EE$ is tangent to the line $\eta=1$.
\item
$\EE_1$ is the set of all $(\chi,\eta)$ for which $\eta<1$ and $f'$ has a root at $\chi$. In particular, $\EE$ is tangent to the line $\chi=t\in R_1$.
\item
$\EE_2$ is the set of all $(\chi,\eta)$ for which $\eta<1$ and $f'$ has a root
at $\chi+\eta-1$. In particular, $\EE$ is tangent to the line $\chi+\eta-1=t\in R_2$.
\end{itemize}

Moreover it is shown in Lemma 2.6 in \cite{Duse14a} that one has following equivalent characterization of the behavior of the roots of $f'_t:=f'(w;\chi_\EE(t),\eta_\EE(t))\big\vert_{w=t}$ whenever $(\chi,\eta)\in \mathcal{E}:$
\begin{enumerate}
\item[(a)]
$(\chi_\EE(t), \eta_\EE(t)) \in \EE_\mu$ if and only if $t \in R_\mu$.
Moreover, in this case, $t$ is a root of $f_t'$ of multiplicity either
$2$ or $3$.
\item[(b)]
$(\chi_\EE(t), \eta_\EE(t)) \in \EE_{\lambda-\mu}$ if and only $t \in R_{\lambda-\mu}$.
Moreover, in this case, $t$ is a root of $f_t'$ of multiplicity either
$2$ or $3$.
\item[(c)]
$(\chi_\EE(t), \eta_\EE(t)) \in \EE_0$ if and only if $t \in R_0$. Moreover,
in this case, $f_t' = C$ and $t$ is a root of $f_t'$ of multiplicity $1$.
\item[(d)]
$(\chi_\EE(t), \eta_\EE(t)) \in \EE_1$ if and only $t \in R_1$. Moreover,
in this case, $t$ is a root of $f_t'$ of multiplicity either $1$ or $2$.
\item[(e)]
$(\chi_\EE(t), \eta_\EE(t)) \in \EE_2$ if and only $t \in R_2$. Moreover,
in this case, $t$ is a root of $f_t'$ of multiplicity either $1$ or $2$. 
\end{enumerate}
If case (a) holds and $t$ is a root of multiplicity 2 of $f_t'$ one expects to see the \emph{extended Airy kernel process}. If on the other hand $t$ is a root of multiplicity 3 of $f_t'$ one expects to see the \emph{Pearcey process}. If case (b) holds and $t$ is a root of multiplicity 2 of $f_t'$ one expects to see the \emph{Id-extended Airy kernel process}. 
What we mean by this is that we have to make a particle/hole transformation, i.e. change the type of tiles we are considering.
If on the other hand $t$ is a root of multiplicity 3 of $f_t'$ one again expects to see the \emph{Id-Pearcey process}. If case (c) holds one expects to see the \emph{GUE corner process}, and similarly if case (d) and (e) holds and $t$ is a root of multiplicity 1 of $f_t'$. In the remaining cases (d) and (e) when $t$ is a root of multiplicity 2 of $f_t'$ one will see the \emph{Cusp-Airy process}, which will be shown in this paper.

 In this article we will assume that $t=t_c\in R_1\cup R_2$ and that $t_c$ is a root of $f_t'$ of multiplicity $2$. If these conditions hold, then it is shown in Lemma 2.9 in \cite{Duse14a} that the edge at $(\chi_{\EE}(t),\eta_{\EE}(t))$ is locally an algebraic cusp of first order, that is the curve locally looks like the algebraic curve $y^3=x^2$ in a neighborhood of the origin. Moreover, if $t_c\in R_2$, then by Theorem 3.1 in \cite{Duse14a} $\chi_c>t_1$. Then (\ref{EqCusp2}) together with the fact that $\chi_c+\eta_c-1=t_c$, where $\chi_c:=\chi_{\mathcal{E}}(t_c)$ and $\eta_c:=\eta_{\mathcal{E}}(t_c)$, gives
\begin{align*}
f'(t_c;\chi_c,\eta_c)&=C_I(t_c)-\int_{t_1}^{\chi_c}\frac{dx}{t_c-x}=C_I(t_c)+\log\vert t_c-\chi_c\vert-\log\vert t_c-t_1\vert\\
&=C_I(t_c)+\log\vert (t_c-t_1)e^{-C_I(t_c)}\vert-\log\vert t_c-t_1\vert=0.
\end{align*}
A similar computation gives $f'(t_c;\chi_c,\eta_c)=0$ whenever $t_c\in R_1$. Therefore, $f'(t_c;\chi_c,\eta_c)\equiv0$ whenever $t_c\in R_1\cup R_2$. This implies that that $(\chi_c,\eta_c)$ is a cusp of $\mathcal{E}$ for $t_c\in R_1\cup R_2$, if and only if $f''(t_c;\chi_c,\eta_c)=0$. However, a direct computation shows that if $t_c\in R_2$, then
\begin{align*}
f''(t_c;\chi_c,\eta_c)=C_I'(t_c)+\frac{e^{C_I(t_c)}-1}{t_c-t_1}
\end{align*}
and if $t_c\in R_1$, then
\begin{align*}
f''(t_c;\chi_c,\eta_c)=C_I'(t_c)+\frac{1-e^{-C_I(t_c)}}{t_c-t_2}.
\end{align*}
Therefore, $\mathcal{E}$ has a cups at $(\chi_c,\eta_c)$, if and only if 
\begin{align}
\label{CupCond1}
C_I'(t_c)=\frac{e^{C_I(t_c)}-1}{t_1-t_c}
\end{align}
 if $t_c\in R_2$, and
\begin{align}
\label{CupCond2}
C_I'(t_c)=\frac{1-e^{-C_I(t_c)}}{t_2-t_c}.
\end{align}
 if $t_c\in R_1$.  
 From now on we will consider only the case $t_c\in R_2$. The case when $t_c\in R_1$ can be treated analogously. 
 
\begin{Assump}
\label{A2}
Let $t_c\in R_2$ and let $(\chi_c,\eta_c):=(\chi_\EE(t_c),\eta_\EE(t_c))$. Assume that $f'(t_c;\chi_c,\eta_c)=f''(t_c;\chi_c,\eta_c)=0$ so that $(\chi_c,\eta_c)$ is the asymptotic cusp point. 
\end{Assump}

\begin{Lem}
\label{Derivative3}
If $t_c\in R_2$ and $f''(t_c;\chi_c,\eta_c)=0$, then $f'''(t_c;\chi_c,\eta_c)>0$.
 \end{Lem}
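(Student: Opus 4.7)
The plan is to write $f'''(t_c;\chi_c,\eta_c)$ as an explicit combination of one-sided moments of $\mu$ and then force positivity by combining the density constraint $\mu\le\lambda$ with the cusp identity $C_I'(t_c)=(e^h-1)/\ell$. First I would exploit the fact that $\mu$ vanishes on $(t_2,t_c)$ and coincides with Lebesgue measure on $I=(t_c,t_1)$ to rewrite
\begin{align*}
f'(w;\chi_c,\eta_c) = \int_a^{t_2}\frac{d\mu(x)}{w-x} - \int_{t_1}^{\chi_c}\frac{dx}{w-x} + \int_{t_1}^b\frac{d\mu(x)}{w-x}.
\end{align*}
Differentiating twice in $w$, evaluating at $w=t_c$, and using $\chi_c-t_c=\ell e^{-h}$ from (\ref{EqCusp2}) yields
\begin{align*}
f'''(t_c;\chi_c,\eta_c) = 2A + \frac{1-e^{2h}}{\ell^{2}} - 2B,
\end{align*}
where $\ell=t_1-t_c$, $h=C_I(t_c)$, $A:=\int_a^{t_2}d\mu(x)/(t_c-x)^3\ge0$ and $B:=\int_{t_1}^b d\mu(x)/(x-t_c)^3\ge0$. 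Since $C_I'(t_c)=-\int d\mu(x)/(t_c-x)^2<0$, the cusp condition forces $h<0$, and hence $(1-e^{2h})/\ell^{2}>0$.

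Next I would bound $B$ using $\mu\le\lambda$. Setting $P:=\int_a^{t_2}d\mu(x)/(t_c-x)^2$ and $Q:=\int_{t_1}^b d\mu(x)/(x-t_c)^2$, the cusp condition reads $P+Q=(1-e^h)/\ell$. Since the density $\varphi$ of $\mu$ on $[t_1,b]$ satisfies $0\le\varphi\le 1$, a pointwise (bang-bang) maximisation of $\int \varphi(x)/(x-t_c)^3\,dx$ with $\int \varphi(x)/(x-t_c)^2\,dx=Q$ held fixed---the factor $1/(x-t_c)$ being monotonically decreasing in $x$---selects the extremiser $\varphi^{\ast}=\mathbf{1}_{[t_1,\,t_1+m]}$ with $m=\ell^{2}Q/(1-\ell Q)$, and direct computation at this extremiser gives
\begin{align*}
B \;\le\; \frac{Q}{\ell}-\frac{Q^{2}}{2}.
\end{align*}

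Finally I would combine these ingredients. Using $1-e^{2h}=(1-e^h)(1+e^h)=\ell(P+Q)(1+e^h)$ together with $(e^h-1)/\ell=-(P+Q)$, both consequences of the cusp identity, a short algebraic reduction yields
\begin{align*}
\frac{1-e^{2h}}{\ell^{2}} - 2\!\left(\frac{Q}{\ell}-\frac{Q^{2}}{2}\right) = P\!\left(\frac{2e^h}{\ell}+P\right),
\end{align*}
so that
\begin{align*}
f'''(t_c;\chi_c,\eta_c) \;\ge\; 2A + \frac{2Pe^h}{\ell} + P^{2} \;\ge\; 0.
\end{align*}
For strict positivity I would observe that simultaneous equality throughout forces both $A=P=0$ (so $\mu$ has no mass in $(-\infty,t_2]$) and $\varphi=\mathbf{1}_{[t_1,t_1+m]}$ on $[t_1,b]$; combined with the density-$1$ region $(t_c,t_1)$ coming from $t_c\in R_2$, this would make $\mu$ have density $1$ on the whole open interval $(t_c,t_1+m)$, contradicting the definition of $t_1$ as the supremum of $t$ with $(t_c,t)\subset R_{\lambda-\mu}$ (note that the cusp condition with $P=0$ enforces $m>0$ since $h<0$). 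Therefore the inequality is strict and $f'''(t_c;\chi_c,\eta_c)>0$. The main technical hurdle is verifying the bang-bang extremiser and the algebraic collapse to $P(2e^h/\ell+P)$; everything else is a direct unpacking of the cusp identity.
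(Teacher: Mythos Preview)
Your argument is correct and follows a genuinely different route from the paper. The paper's proof is a one-paragraph appeal to geometric results from the companion paper \cite{Duse14a}: the signed extrinsic curvature of the edge $\EE$ is negative at smooth points, so cusps must point into the liquid region, and combining Lemma~2.8(g) and Lemma~2.9 case~(9) of \cite{Duse14a} then forces $f'''(t_c;\chi_c,\eta_c)>0$. Your proof, by contrast, is entirely self-contained and analytic: you expand $f'''$ explicitly at $t_c$, exploit the cusp identity $P+Q=(1-e^h)/\ell$, and bound the ``bad'' third-moment $B$ via a bathtub/bang-bang maximisation using only the density constraint $0\le\varphi\le1$. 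The algebraic collapse to $2A+2Pe^h/\ell+P^2$ is correct, and the stochastic-domination argument behind the extremiser $\varphi^\ast=\mathbf{1}_{[t_1,t_1+m]}$ is standard.

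There is one small gap in your strict-positivity step. You assert $h<0$ on the grounds that $C_I'(t_c)<0$, but $C_I'(t_c)=-(P+Q)$, so strict negativity is exactly $P+Q>0$; when you then argue that $P=0$ forces $m>0$ ``since $h<0$'', the reasoning is circular in the residual case $P=Q=0$. This case is, however, easily excluded directly: if $P=Q=0$ then $\mu$ has no mass outside $[t_c,t_1]$, and since $\varphi\equiv1$ there, $\|\mu\|=1$ forces $\ell=1$ and $[a,b]=[t_c,t_1]$, contradicting the standing hypothesis $b-a>1$. With that one line added, your argument is complete. What your approach buys is independence from the curvature machinery of \cite{Duse14a}; what the paper's approach buys is a conceptual explanation (the cusp points inward) rather than an inequality chase.
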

\begin{proof}
By Lemma 2.9 in \cite{Duse14a}, the signed extrinsic curvature $k_{\EE}(t)$ is negative for all smooth points of the curve $\EE$. From this it follows that all cusps point into the liquid region $\LL$. Together with Lemma 2.9 case (9) and Lemma 2.8 formula (g) in \cite{Duse14a} it follows that $f'''(t_c;\chi_c,\eta_c)>0$.
\end{proof}


In order to prove convergence to the Cusp-Airy kernel in our discrete model we will have to assume that $\mu_n$ will jump from empty to full not just asymptotically in terms of $\varphi$,
but already at the discrete level. This is the content of the next assumption.

\begin{Assump}
\label{A3}
Assume that for every $\eps>0$ and  $n$ large enough we have for $t_c\in R_2$,
\begin{align}
\label{assumpLocalRegR2}
\mu_n\big \vert_{[t_2+\eps,t_1-\eps]} =\frac{1}{n}\sum_{\lfloor nt_c\rfloor\leq k\leq n(t_1-\eps)}\delta_{k/n}.
\end{align} 
where $\lfloor x\rfloor=\max\{m\in \Z: m\leq x\}$.
\end{Assump}

\begin{rem}
Assuming only weak convergence of the empirical measures $\{\mu_n\}_n$ to a limiting measure $\mu$ will not be sufficient when considering fluctuation of the \emph{edge} $\EE$. We will need better control of the convergence of sequence of empirical measures. More precisely, it is necessary to assume that their supports converge in an appropriate sense, see \cite{Duse15c}. This will not be needed here since the other assumptions that we make are enough.
\end{rem}

\subsection{Rescaled Variables}

Introduce the following notation
\begin{align*}
a_n&:=n^{-1}\min\{t\in \text{supp}(\mu_n)\}=n^{-1}\beta_n^{(n)}
\end{align*}
and
\begin{align*}
b_n:=n^{-1}\max\{t\in \text{supp}(\mu_n)\}=n^{-1}\beta_1^{(n)}.
\end{align*}
Furthermore, we write
\begin{align}
\label{CriticalPoint}
t_c^{(n)}&:=n^{-1}\min\{\beta_i^{(n)}: \beta_i^{(n)}\geq \lfloor nt_c\rfloor\}=n^{-1}\min \{t\in \text{supp}(\mu_n):t\geq  \lfloor nt_c\rfloor\},
\end{align}
\begin{align}
\label{T1}
t_1^{(n)}+n^{-1}=n^{-1}\min\{t\in \text{supp}(\lambda_n-\mu_n):t>nt_c^{(n)}\},
\end{align}
and 
\begin{align}
\label{T2}
t_2^{(n)}=n^{-1}\max\{t\in \text{supp}(\mu_n):t<nt_c^{(n)}\}.
\end{align}
In words, $nt_1^{(n)}+1$ is the position of the first hole after $nt_c^{(n)}$, i.e., $nt_1^{(n)}$ is the position or the last particle in a densely packed block after $nt_c^{(n)}$, and $nt_2^{(n)}$ is the position of the last particle before $nt_c^{(n)}$. In particular $(nt_2^{(n)},nt_c^{(n)})$ is an empty block and $[nt_c^{(n)},nt_1^{(n)}]$ is a densely packed block. 
Finally, we define 
\begin{align}
\label{AsympCriticalPoint}
\text{$x_c^{(n)}:=n^{-1}\lfloor n\chi_c\rfloor$ and $y_c^{(n)}:=1+t_c^{(n)}-x_c^{(n)}$}.
\end{align}

We note that by Assumption \ref{A3}, it follows that $\lim_{n\to \infty}a_n=a$, $\lim_{n\to \infty}b_n=b$, $\lim_{n\to \infty}t_2^{(n)}=t_2$, $\lim_{n\to \infty}t_1^{(n)}=t_1$ and $\lim_{n\to \infty}t_c^{(n)}=t_c$.
Let $\sigma$ be a signed Borel measure on $\R$ and let 
\begin{align}
\label{AsympMeasureCuspLogPotential}
U^{\sigma}(x)=\int_{\R}\log\vert x-t \vert d\sigma(t)
\end{align}
denote the logarithmic potential of $\sigma$. 
\begin{rem}
Note that sometimes the logarithmic potential is defined with opposite sign. We will however follow the convention in \cite{Ransford}. 
\end{rem}
Consider the signed measure 
\begin{align}
\label{AsympMeasureCusp}
d\nu(x)=(\chi_{[a,t_2]}(x)+\chi_{[\chi_c,b](x)})\varphi(x)dx-(1-\varphi(x))\chi_{[t_1,\chi_c]}(x)dx.
\end{align}
Then at the cusp $\chi_c+\eta_c-1=t_c$, and
\begin{align}
\label{AsympFuncCusp}
f(w;\chi_c,\eta_c)=f(w;\chi_c)&=\int_{a}^{t_2}\log(w-x)d\mu(x)-\int_{t_1}^{\chi_c}\log(w-x)d(\lambda-\mu)(x)+\int_{\chi_c}^{b}\log(w-x)d\mu(x)\nonumber \\
&=\int_{\R}\log(w-x)d\nu(x)=U^{\nu}(w)+i\int_{\R}\text{arg}(w-t)d\nu(t),
\end{align}
where $\log$ denotes the principal branch of the complex logarithm function.

In particular we note that $f$ is independent of $\eta_c$. By assumption on $\mu$ we have a complete cancelation of the measures $\mu$ and $\lambda$ on $[t_c,t_1]$, that is $\lambda-\mu\big\vert_{[t_c,t_1]}=0$. For the exact kernel however, the cancelation of factors is not necessarily complete. 
Moreover due to the rigidity of the interlacing system, as shown in figure \ref{figInterlacing1}, their will be no fluctuations around the frozen boundary at the cusp in the orthogonal direction to the tangential direction of the cusp. It is therefore natural to assume a discrete variation in the orthogonal direction. We therefore assume the following scaling:
\begin{Def}\label{def:scaling}
Introduce the fixed limiting variables $r,s\in \Z$, and  $\xi,\tau\in \R$.
Let
\begin{align}
\label{DefC0}
c_0=\frac{2(\chi_c-t_c)}{d_0},
\end{align}
where
\begin{align}
\label{DefD0}
d_0=\bigg(\frac{2}{f'''(t_c)}\bigg)^{1/3}.
\end{align}
Define the rescaled variables $\xi_n,\tau_n\in \R$, by
\begin{equation}
\label{coord}
 \left\{
\begin{array}{l}
x_1=nx_c^{(n)}+\frac{1}{2}\big(r-c_0n^{1/3}\xi_n\big)\\
y_1=ny_c^{(n)}+\frac{1}{2}\big(r+c_0n^{1/3}\xi_n\big)\\
x_2=nx_c^{(n)}+\frac{1}{2}\big(s-c_0n^{1/3}\tau_n\big)\\
y_2=ny_c^{(n)}+\frac{1}{2}\big(s+c_0n^{1/3}\tau_n\big)
 \end{array} \right..
\end{equation}
We assume that 
\begin{align}
\lim_{n \rightarrow\infty}\xi_n=\xi,\quad \lim_{n \rightarrow\infty}\tau_n&=\tau.
\end{align}
\end{Def}
When taking limits of the correlation kernel we will always use the scaling (\ref{coord}).

Note that by (\ref{AsympCriticalPoint}), the rescaled variables satisfy 
\begin{equation} \left\{
\begin{array}{llll} y_1+x_1=n+nt_c^{(n)}+r \\
y_1-x_1=ny_c^{(n)}-nx_c^{(n)}+c_0n^{1/3}\xi_n\\
y_2+x_2=n+nt_c^{(n)}+s \\
 y_2-x_2=ny_c^{(n)}-nx_c^{(n)}+c_0n^{1/3}\tau_n\\
  \end{array} \right..
\end{equation}

It will be convenient to introduce the notation
\begin{align}
\label{ScalingDeltaX}
\Delta x_1^{(n)}=\frac{1}{2}(r-c_0n^{1/3}\xi_n), \quad \Delta x_2^{(n)}=\frac{1}{2}(s-c_0n^{1/3}\tau_n).
\end{align}
The rescaled coordinate system is depicted in figure \ref{figRescaledCoord}.
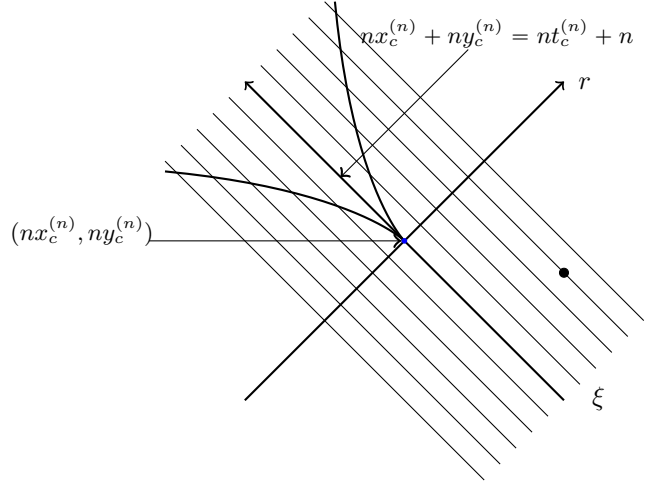
\begin{figure}[H]

\centering
\begin{tikzpicture}[xscale=0.6,yscale=0.6]

\draw[thick,->,rotate=-45] (5,0) -- (-5,0);
\draw[thick, ->,rotate=-45] (0,-5) -- (0,5);
\draw[rotate=-45] (-5,0.5) -- (5,0.5);
\draw[rotate=-45] (-5,1) -- (5,1);
\draw[rotate=-45] (-5,1.5) -- (5,1.5);
\draw[rotate=-45] (-5,2) -- (5,2);
\draw[rotate=-45] (-5,2.5) -- (5,2.5);
\draw[rotate=-45] (-5,-0.5) -- (5,-0.5);
\draw[rotate=-45] (-5,-1) -- (5,-1);
\draw[rotate=-45] (-5,-1.5) -- (5,-1.5);
\draw[rotate=-45] (-5,-2) -- (5,-2);
\draw[rotate=-45] (-5,-2.5) -- (5,-2.5);
\draw[thick,rotate=225,domain=-2.2:0] plot (0.25*\x^3, \x^2);
\draw[thick,rotate=225,domain=-2.2:0] plot (-0.25*\x^3, \x^2);
\draw (4,3.5) node {$r$};
\draw (4.3,-3.5) node {$\xi$};
\draw[rotate=-45] (-4,-4)--(0,0);
\draw[arrows=->,line width=1pt,rotate=-45] (-0.1,-0.1)--(-0.05,-0.05);
\filldraw[black,rotate=-45] (3,2) circle (0.1cm);
\filldraw[blue] (0,0) circle (0.05cm);
\draw[rotate=-45] (-5.2,-4.9) node {\small$(nx_c^{(n)},ny_c^{(n)})$};
\draw[rotate=-45] (-2,0)--(-2,4);
\draw[arrows=->,line width=1pt,rotate=-45] (-2,0.05)--(-2,0);
\draw[rotate=-45] (-1.8,4.7) node {\small$nx_c^{(n)}+ny_c^{(n)}=nt_c^{(n)}+n$};
\end{tikzpicture}
\caption{\label{figRescaledCoord} Rescaled coordinate system at the cusp.}
\end{figure}

\subsection{Integrand}

In order to write the integrand in (\ref{Red}) in a convenient way we introduce some notation. Let
\begin{align}
\label{Funcpole}
q_n(w;r)&=\frac{\prod_{k=nt_c^{(n)}}^{x_1}(w-k)}{\prod_{k=x_1+y_1-n}^{x_1}(w-k)}\\
&=1_{r>0}\prod_{k=nt_c^{(n)}}^{nt_c^{(n)}+r-1}(w-k)+1_{r=0}+1_{r<0}\prod_{k=nt_c^{(n)}+r}^{nt_c^{(n)}-1}(w-k)^{-1},\nonumber
\end{align}
and
\begin{align}
\label{FuncCont}
Q_n(w;\Delta x_1^{(n)})&=\frac{\prod_{k=nt_c^{(n)}}^{nx_c^{(n)}}(w-k)}{\prod_{k=nt_c^{(n)}}^{x_1}(w-k)}\\
&=1_{\Delta x_1^{(n)}>0}\prod_{k=nx_c^{(n)}+1}^{nx_c^{(n)}+\Delta x_1^{(n)}}(w-k)^{-1}+1_{\Delta x_1^{(n)}=0}+1_{\Delta x_1^{(n)}<0}\prod_{k=nx_c^{(n)}+\Delta x_1^{(n)}+1}^{nx_c^{(n)}}(w-k).\nonumber
\end{align}
We see that $q_n(w;r)$ only depends on the parameters through $r$. Also, since $x_1=nx_c^{(n)}+\Delta x_1^{(n)}$, $Q_n(w;\Delta x_1^{(n)})$ only depends on the parameters through $\Delta x_1^{(n)}$. 

Furthermore, let
\begin{align}
\label{FuncE}
E_n(w)=\frac{\prod_{\beta_i^{(n)}\leq nt_2^{(n)}}(w-\beta_i^{(n)})\prod_{nt_1^{(n)}<\beta_i^{(n)}}(w-\beta_i^{(n)})}{\prod_{k=nt_1^{(n)}+1}^{nx_c^{(n)}}(w-k)}.
\end{align}

\begin{Lem}
\label{RedCancel}
We have that 
\begin{align}
\label{lemCancelKernel}
K_{\mathcal{R}}^{(n)}((x_1,y_1),(x_2,y_2))&=-1_{x_1<x_2}\frac{1}{(2\pi i)^2}\oint_{\G_n}dz\oint_{\g_n} dw\frac{1}{(w-z)(z-n^{-1}x_2)}\frac{q_n(nw;r)Q_n(nw,\Delta x_1^{(n)})E_n(nw)}{q_n(nz;s)Q_n(nz;\Delta x_2^{(n)})E_n(nz)}\nonumber\\
&+1_{x_1\geq x_2}\frac{1}{(2\pi i)^2}\oint_{\tilde{\G}_n}dz\oint_{\tilde{\g}_n} dw\frac{1}{(w-z)(z-n^{-1}x_2)}\frac{q_n(nw;r)Q_n(nw,\Delta x_1^{(n)})E_n(nw)}{q_n(nz;s)Q_n(nz;\Delta x_2^{(n)})E_n(nz)},
\end{align}
where where $\Gamma_n$ is a counterclockwise oriented contour that contains the set $\{n^{-1}\beta_i^{(n)}\geq t_1^{(n)}\}$ but not the set $\{\beta_i^{(n)}\leq t_c^{(n)}+s\}$, and $\tilde{\Gamma}_n$ is a counterclockwise oriented contour containing the set $\{n^{-1}\beta_i^{(n)}\leq t_c^{(n)}+s\}$ but not the set $\{n^{-1}\beta_i^{(n)}\geq t_1^{(n)}\}$. In addition, $\gamma_n$ is the counterclockwise oriented contour that contains the set $n^{-1}\{x_1+y_1-n,...,x_1\}$ and $\Gamma_n$ and $\tilde{\g}_n$ is the counterclockwise oriented contour that contains the set $n^{-1}\{x_1+y_1-n,...,x_1\}$ and $\tilde{\Gamma}_n$. See figure \ref{figCancelledContour1} and \ref{figCancelledContour2}.
\end{Lem}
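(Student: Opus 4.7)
The statement is a purely algebraic rewriting of Proposition \ref{Transform} followed by a rescaling of the integration contours, in which Assumption \ref{A3} is used to cancel the densely packed block of particles against part of the denominator coming from $\prod_{k=x_1+y_1-n}^{x_1}(w-k)$. The plan is therefore to compute the integrand of (\ref{Red}) in closed form in terms of $q_n$, $Q_n$ and $E_n$, and then substitute $w\mapsto nw$, $z\mapsto nz$ in the contour integrals.

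First, using the relations
\begin{align*}
x_1+y_1-n &= nt_c^{(n)}+r, & x_1 &= nx_c^{(n)}+\Delta x_1^{(n)}, \\
x_2+y_2-n &= nt_c^{(n)}+s, & x_2 &= nx_c^{(n)}+\Delta x_2^{(n)},
\end{align*}
coming from (\ref{coord}) and (\ref{ScalingDeltaX}), the two products of the integrand become
\[
\prod_{k=x_1+y_1-n}^{x_1}(w-k)=\prod_{k=nt_c^{(n)}+r}^{nx_c^{(n)}+\Delta x_1^{(n)}}(w-k),
\quad
\prod_{k=x_2+y_2-n}^{x_2-1}(z-k)=\prod_{k=nt_c^{(n)}+s}^{nx_c^{(n)}+\Delta x_2^{(n)}-1}(z-k).
\]
Directly from (\ref{Funcpole}), (\ref{FuncCont}) one has the identities
\[
q_n(w;r)\,Q_n(w;\Delta x_1^{(n)})=\frac{\prod_{k=nt_c^{(n)}}^{nx_c^{(n)}}(w-k)}{\prod_{k=nt_c^{(n)}+r}^{nx_c^{(n)}+\Delta x_1^{(n)}}(w-k)},
\]
valid for arbitrary signs of $r$ and $\Delta x_1^{(n)}$ once one unfolds the product/reciprocal conventions, and similarly for $z$ with $s$ and $\Delta x_2^{(n)}$, where an extra factor $(z-x_2)$ must be extracted because the upper index of the $z$-product is $x_2-1$ rather than $x_2$. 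Next, Assumption \ref{A3} means that the set $\{\beta_i^{(n)}\}$ is precisely $\{\beta_i^{(n)}\leq nt_2^{(n)}\}\cup\{nt_c^{(n)},nt_c^{(n)}+1,\ldots,nt_1^{(n)}\}\cup\{\beta_i^{(n)}>nt_1^{(n)}\}$, so from (\ref{FuncE}) one gets the factorisation
\[
\prod_{i=1}^{n}(w-\beta_i^{(n)})=E_n(w)\prod_{k=nt_c^{(n)}}^{nx_c^{(n)}}(w-k).
\]
Combining these three identities, the common factor $\prod_{k=nt_c^{(n)}}^{nx_c^{(n)}}(w-k)$ cancels in the numerator of the $w$-piece and, analogously, $\prod_{k=nt_c^{(n)}}^{nx_c^{(n)}}(z-k)$ cancels in the denominator of the $z$-piece, leaving the integrand as
\[
\frac{1}{w-z}\,\frac{q_n(w;r)\,Q_n(w;\Delta x_1^{(n)})\,E_n(w)}{q_n(z;s)\,Q_n(z;\Delta x_2^{(n)})\,(z-x_2)\,E_n(z)}.
\]

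Finally, substitute $w\mapsto nw$ and $z\mapsto nz$; the contours $\mathscr{W}_n,\mathscr{Z}_n,\mathscr{Z}_n'$ are replaced by $\gamma_n=n^{-1}\mathscr{W}_n$, $\Gamma_n=n^{-1}\mathscr{Z}_n$, $\tilde\Gamma_n=n^{-1}\mathscr{Z}_n'$. The Jacobian contributes $n^2$, the factor $w-z$ produces $n(w-z)$, and $z-x_2$ produces $n(z-n^{-1}x_2)$, so the net power of $n$ is zero and the factor $(z-x_2)$ becomes $(z-n^{-1}x_2)$. After this substitution the $\beta_i^{(n)}$-dependent pieces of $E_n, q_n, Q_n$ are evaluated at $nw$ and $nz$, which is exactly the form stated. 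The contour descriptions in the Lemma follow from those in Proposition \ref{Transform} after rescaling, noting that by Assumption \ref{A3} the only $\beta_i^{(n)}$ that are separated from $x_2$ on the two sides are those with $\beta_i^{(n)}>nt_1^{(n)}$ and those with $\beta_i^{(n)}\leq nt_2^{(n)}$; the remaining integers in $[nt_c^{(n)},nt_1^{(n)}]$ that previously had to be enclosed or excluded have been absorbed in the $E_n$-cancellation, so the minimal requirement on $\Gamma_n$ (resp.\ $\tilde\Gamma_n$) is to enclose $\{n^{-1}\beta_i^{(n)}\geq t_1^{(n)}\}$ (resp.\ its complement near the critical point) while $\gamma_n$ and $\tilde\gamma_n$ must additionally enclose $n^{-1}\{x_1+y_1-n,\ldots,x_1\}$. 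The bookkeeping of signs of $r,s,\Delta x_1^{(n)},\Delta x_2^{(n)}$ in the products is the only mildly tedious part; no analysis beyond contour deformation and integer arithmetic is involved.
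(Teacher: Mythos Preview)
Your proof is correct and follows essentially the same approach as the paper: factor $\prod_i(w-\beta_i^{(n)})$ using Assumption~\ref{A3}, recognize the resulting ratio as $q_n\,Q_n\,E_n$ via (\ref{Funcpole})--(\ref{FuncE}), extract the extra $(z-x_2)$ on the $z$-side, then rescale $w\mapsto nw$, $z\mapsto nz$ and deform the contours to shed the cancelled poles. The only cosmetic difference is that you state the $q_nQ_n$ identity and the $E_n$ factorisation separately before combining, whereas the paper runs the whole chain in one display.
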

\begin{figure}[H]
\centering
\begin{tikzpicture}

\draw (0.7,0) ellipse (5.9cm and 2.2cm);
\draw (1.0,0) ellipse (4.2cm and 1.8cm);
\draw(-9,0) --++(16,0);

\draw[arrows=->,line width=1pt](0.56,2.2)--(0.55,2.2);
\draw (1.4,2.4) node {$\g_n$};
\draw[arrows=->,line width=1pt](0.61,1.8)--(0.6,1.8);
\draw (1.0,1.3) node {$\G_n$};

\draw (-8,0.5) node {$a_n$};
\filldraw[fill=red, draw=red] (-8,0) circle (0.05 cm);
\filldraw[fill=red, draw=red] (-7.8,0) circle (0.05 cm);
\filldraw[fill=red, draw=red] (-7.6,0) circle (0.05 cm);
\filldraw[fill=red, draw=red] (-7.4,0) circle (0.05 cm);
\filldraw[fill=red, draw=red] (-7.2,0) circle (0.05 cm);
\filldraw[fill=red, draw=red] (-7.0,0) circle (0.05 cm);
\filldraw[fill=red, draw=red] (-6.8,0) circle (0.05 cm);
\filldraw[fill=red, draw=red] (-6.6,0) circle (0.05 cm);
\filldraw[fill=red, draw=red] (-6.4,0) circle (0.05 cm);
\draw (-6.4,0.5) node {$t_2^{(n)}$};
\draw (-4.5,0.5) node {$t_c^{(n)}$};
\draw(-4.5,-.1) --++(0,.2);
\filldraw[fill=red, draw=red] (-4.4,0) circle (0.05 cm);
\filldraw[fill=blue, draw=blue] (-4.6,0) circle (0.05 cm);
\filldraw[fill=blue, draw=blue] (-4.8,0) circle (0.05 cm);
\draw (-1.8,0.5) node {$t_1^{(n)}$};
\filldraw[fill=blue, draw=blue] (-1.8,0) circle (0.05 cm);
\filldraw[fill=blue, draw=blue] (-1.6,0) circle (0.05 cm);
\filldraw[fill=blue, draw=blue] (-1.2,0) circle (0.05 cm);
\filldraw[fill=blue, draw=blue] (-0.9,0) circle (0.05 cm);
\filldraw[fill=blue, draw=blue] (-0.6,0) circle (0.05 cm);
\filldraw[fill=blue, draw=blue] (-0.4,0) circle (0.05 cm);
\draw (-0.4,0.5) node {$x_c^{(n)}$};
\filldraw[fill=red, draw=red] (-0.2,0) circle (0.05 cm);
\filldraw[fill=red, draw=red] (0,0) circle (0.05 cm);
\filldraw[fill=red, draw=red] (0.2,0) circle (0.05 cm);
\filldraw[fill=red, draw=red] (0.5,0) circle (0.05 cm);
\filldraw[fill=red, draw=red] (0.8,0) circle (0.05 cm);
\filldraw[fill=red, draw=red] (1.1,0) circle (0.05 cm);
\filldraw[fill=red, draw=red] (1.4,0) circle (0.05 cm);
\filldraw[fill=red, draw=red] (1.8,0) circle (0.05 cm);
\filldraw[fill=red, draw=red] (2.2,0) circle (0.05 cm);
\filldraw[fill=red, draw=red] (2.4,0) circle (0.05 cm);

\filldraw[fill=red, draw=red] (3.2,0) circle (0.05 cm);
\filldraw[fill=red, draw=red] (3.4,0) circle (0.05 cm);
\filldraw[fill=red, draw=red] (3.6,0) circle (0.05 cm);
\filldraw[fill=red, draw=red] (3.8,0) circle (0.05 cm);
\filldraw[fill=red, draw=red] (4,0) circle (0.05 cm);
\filldraw[fill=red, draw=red] (4.2,0) circle (0.05 cm);
\filldraw[fill=red, draw=red] (4.4,0) circle (0.05 cm);
\filldraw[fill=red, draw=red] (4.6,0) circle (0.05 cm);
\draw (4.6,0.5) node {$b_n$};

\end{tikzpicture}
\caption{\protect \label{figCancelledContour1} Integration contours for the correlation kernel. Blue dots indicate the positions of poles and red dots indicate the position of zeroes of the function $q_n(nw;r)E_n(nw)$.}
\end{figure}

\begin{figure}[H]

\centering
\begin{tikzpicture}

\draw (-2,0) ellipse (6.8cm and 2.4cm);
\draw (-5.7,0) ellipse (2.6cm and 1.4cm);
\draw(-10,0) --++(16,0);

\draw[arrows=->,line width=1pt](0.56,2.2)--(0.55,2.2);
\draw (1.4,2.4) node {$\tilde{\g}_n$};
\draw[arrows=->,line width=1pt](-5.69,1.4)--(-5.7,1.4);
\draw (-5.1,1.6) node {$\tilde{\G}_n$};

\draw (-8,0.3) node {$a_n$};
\filldraw[fill=red, draw=red] (-8,0) circle (0.05 cm);
\filldraw[fill=red, draw=red] (-7.8,0) circle (0.05 cm);
\filldraw[fill=red, draw=red] (-7.6,0) circle (0.05 cm);
\filldraw[fill=red, draw=red] (-7.4,0) circle (0.05 cm);
\filldraw[fill=red, draw=red] (-7.2,0) circle (0.05 cm);
\filldraw[fill=red, draw=red] (-7.0,0) circle (0.05 cm);
\filldraw[fill=red, draw=red] (-6.8,0) circle (0.05 cm);
\filldraw[fill=red, draw=red] (-6.6,0) circle (0.05 cm);
\filldraw[fill=red, draw=red] (-6.4,0) circle (0.05 cm);
\draw (-6.4,0.5) node {$t_2^{(n)}$};
\draw (-4.5,0.5) node {$t_c^{(n)}$};
\draw(-4.5,-.1) --++(0,.2);
\filldraw[fill=red, draw=red] (-4.4,0) circle (0.05 cm);
\filldraw[fill=blue, draw=blue] (-4.6,0) circle (0.05 cm);
\filldraw[fill=blue, draw=blue] (-4.8,0) circle (0.05 cm);
\draw (-1.8,0.5) node {$t_1^{(n)}$};
\filldraw[fill=blue, draw=blue] (-1.8,0) circle (0.05 cm);
\filldraw[fill=blue, draw=blue] (-1.6,0) circle (0.05 cm);
\filldraw[fill=blue, draw=blue] (-1.2,0) circle (0.05 cm);
\filldraw[fill=blue, draw=blue] (-0.9,0) circle (0.05 cm);
\filldraw[fill=blue, draw=blue] (-0.6,0) circle (0.05 cm);
\filldraw[fill=blue, draw=blue] (-0.4,0) circle (0.05 cm);
\draw (-0.4,0.5) node {$x_c^{(n)}$};
\filldraw[fill=red, draw=red] (-0.2,0) circle (0.05 cm);
\filldraw[fill=red, draw=red] (0,0) circle (0.05 cm);
\filldraw[fill=red, draw=red] (0.2,0) circle (0.05 cm);
\filldraw[fill=red, draw=red] (0.5,0) circle (0.05 cm);
\filldraw[fill=red, draw=red] (0.8,0) circle (0.05 cm);
\filldraw[fill=red, draw=red] (1.1,0) circle (0.05 cm);
\filldraw[fill=red, draw=red] (1.4,0) circle (0.05 cm);
\filldraw[fill=red, draw=red] (1.8,0) circle (0.05 cm);
\filldraw[fill=red, draw=red] (2.2,0) circle (0.05 cm);
\filldraw[fill=red, draw=red] (2.4,0) circle (0.05 cm);

\filldraw[fill=red, draw=red] (3.2,0) circle (0.05 cm);
\filldraw[fill=red, draw=red] (3.4,0) circle (0.05 cm);
\filldraw[fill=red, draw=red] (3.6,0) circle (0.05 cm);
\filldraw[fill=red, draw=red] (3.8,0) circle (0.05 cm);
\filldraw[fill=red, draw=red] (4,0) circle (0.05 cm);
\filldraw[fill=red, draw=red] (4.2,0) circle (0.05 cm);
\filldraw[fill=red, draw=red] (4.4,0) circle (0.05 cm);
\filldraw[fill=red, draw=red] (4.6,0) circle (0.05 cm);
\draw (4.45,0.3) node {$b_n$};

\end{tikzpicture}
\caption{\label{figCancelledContour2}  Integration contours for the correlation kernel. Blue dots indicate the positions of poles and red dots indicate the position of zeroes of the function $q_n(nw;r)E_n(nw)$.}
\end{figure}
The lemma will be proven in section 2.1. 

Let $\nu_n$ and $\nu_{n,j}$, $j=1,2$, be the signed measures
\begin{align}
\label{SignedMeasure}
&\nu_n(t)=\frac{1}{n}\sum_{\beta_i^{(n)}\leq nt_2^{(n)}}\delta_{\beta_i^{(n)}/n}-\frac{1}{n}\bigg(\sum_{k=nt_1^{(n)}+1}^{nx_c^{(n)}}\delta_{k/n}-\sum_{nt_1^{(n)}<\beta_i^{(n)}\leq nx_c^{(n)}}\delta_{\beta_i^{(n)}/n}\bigg)+\frac{1}{n}\sum_{nx_c^{(n)}<\beta_i^{(n)}}\delta_{\beta_i^{(n)}/n},\\
\label{SignedMeasure1}
&\nu_{n,j}(t)=\frac{1}{n}\sum_{\beta_i^{(n)}\leq n t_2^{(n)}}\delta_{\beta_i^{(n)}/n}-\frac{1}{n}\bigg(\sum_{k=nt_1^{(n)}+1}^{x_j^{(n)}}\delta_{k/n}-\sum_{nt_1^{(n)}<\beta_i^{(n)}\leq x_j^{(n)}}\delta_{\beta_i^{(n)}/n}\bigg)+\frac{1}{n}\sum_{x_c^{(n)}<\beta_i^{(n)}}\delta_{\beta_i^{(n)}/n}.
\end{align}
Furtheremore, define $f_n$, $g_{n,j}$ and $h_{n,j}$ by
\begin{align}
\label{FuncNonAsymp1}
f_n(w)&=\frac{1}{n}\log E_n(nw)=\int_{\R}\log(w-t)d\nu_n(t),\\
\label{FuncNonAsymp2}
g_{n,j}(w)&=\frac{1}{n}\log Q_n(nw;\Delta x_j^{(n)})+f_n(w):=\frac{1}{n}h_{n,j}(w)+f_n(w).
\end{align}
In particular, Re$[g_{n,j}(w)]=\int_{\R}\log\vert w-t\vert d\nu_{n,j}(t)=U^{\nu_{n,j}}(w)$. Here, we let $\log$ be the principal branch of the logaritm for $f_n(w)$. For $h_{n,j}(w)$, we let the branch cut of $\log$ lie along the the \emph{positive} real axis. With these choices of branch cuts, it follows that $f_n$ is analytic on $\C\backslash (-\infty, b_n]$, and $h_{n,j}$ is analytic in $\C\backslash [t_1^{(n)},+\infty)$. In particular we note that $f_n(w)$ has a jump discontinuity over the real line at $t_c$. However Re[$f_n$] is real analytic on $\C\backslash ([a_n,t_2^{(n)}]\cup [t_1^{(n)},b_n])$, and $f'_n$ is homolorphic on $\C\backslash ([a_n,t_2^{(n)}]\cup [t_1^{(n)},b_n])$. Moreover, $\displaystyle\lim_{t_c\pm i\eps\to t_c}f(t_c\pm i\eps):=f_n^{\pm}(t_c)=\text{Re}[f_n(t_c)]\pm i\pi k/n$, where $k$ is an integer. Therefore 
$\exp(n(f_n^{+}(t_c)-f_n^-(t_c)))=\exp(2\pi i k)=1$. Thus the jump discontinuity in the imaginary part of $f_n$ does not matter when we perform a steepest descent analysis in a neighborhood of $t_c$. From now on it will be understood that when we Taylor expand $f_n$ at $w=t_c$, we look at the branch
\begin{align*}
f_n^{+}(t_c)+\sum_{n=1}^{\infty}\frac{f^{(n)}(t_c)}{n!}(w-t_c)^n.
\end{align*}
It follows from our assumptions that $f_n$ and $g_{n,j}$ converge uniformly to $f$ on compact subsets of $\C\backslash ([a,t_2]\cup[t_1,b])$, see Lemma \ref{UniformConv}
\newline\newline \indent
We need one more assumption which will enable us to replace the non-asymptotic function $f_n(z)$ by the asymptotic function $f(z)$ in a neighbourhood of the critical point $t_c$. 
\begin{Assump}
\label{A4}
There is a neighbourhood $U$ of $t_c$ such that
\begin{align}
\label{assumpConvRate}
&\lim_{n\to \infty} n^{2/3}(f_n'(z)-f'(z))=0
\end{align}
uniformly in $U$.
\end{Assump}

\subsection{Main Theorem}

We can now give the main theorem about convergence to the Cusp-Airy kernel for our system of red particles in the interlacing model.

\begin{Thm}
\label{thm}
Assume that the sequence of empirical measures $\{\mu_n\}_n$ satisfies Assumptions \ref{A1}-\ref{A4}, and assume the scaling in Definition \ref{def:scaling}. Then
\begin{align} 
\lim_{n\to \infty}\frac{p_n(x_2,y_2)}{p_n(x_1,y_1)}\frac{c_0}{2}n^{1/3}K_{\mathcal{R}}^{(n)}((x_1,y_1),(x_2,y_2))=\mathcal{K}_{CA}((\xi,r),(\tau,s))
\end{align}
uniformly for $\xi$ and $\tau$ in some fixed compact subset of $\R$, where 
\begin{align}\label{Conjugationfactor}
p_n(x_1,y_1)=\bigg(d_0n^{\frac{2}{3}}\bigg)^{x_1+y_1-n-nt_c^{(n)}}Q_n(nt_c;x_1-nx_c^{(n)}). 
\end{align} 
Let $(\xi^r_j,r)$ be the rescaled coordinates for particles on line $r$. Fix $r_1,\dots,r_M\in\Z$ and let $\phi\,:\,\R\times\{r_1,\dots,r_M\}\to[0,1]$ be a bounded measurable function
with compact support. Let $\mathbb{E}_{\beta^{(n)}}$ denote the expectation with respect to the determinantal point process with kernel $K_{\mathcal{R}}^{(n)}$. Then,
\begin{align} \label{Weaklimit}
\lim_{n\to \infty}\mathbb{E}_{\beta^{(n)}}\bigg[\prod_{r\in\{r_1,\dots,r_m\}}\prod_{j}(1-\phi(\xi_j^r,r))\bigg]=\det(I-\phi \mathcal{K}_{CA})_{L^2(\R\times \{r_1,...,r_M\})}.
\end{align} 
\end{Thm}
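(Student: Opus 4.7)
The plan is to run a saddle point / steepest descent analysis on the double-contour formula given by Lemma \ref{RedCancel}. After multiplying by the conjugation factor $p_n(x_2,y_2)/p_n(x_1,y_1)$ and rescaling the $w$-integral by $z=nw$ (and similarly for the other variable), the integrand in both terms in \eqref{lemCancelKernel} takes the schematic form
\begin{equation*}
\frac{1}{(w-z)(z-n^{-1}x_2)}\,\frac{q_n(nw;r)}{q_n(nz;s)}\,\exp\!\bigl(n(g_{n,1}(w)-g_{n,2}(z))\bigr),
\end{equation*}
so that Assumption \ref{A4} together with Assumptions \ref{A1}--\ref{A3} lets me replace $f_n$ and $g_{n,j}$ by $f$ (the asymptotic function \eqref{AsympFuncCusp}) in a neighbourhood of $t_c$, at cost $o(1)$ once multiplied by $n$ on the critical $n^{-1/3}$-scale. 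Since $f'(t_c)=f''(t_c)=0$ and $f'''(t_c)=2/d_0^3>0$ by Lemma \ref{Derivative3}, the local expansion near $t_c$ is $f(w)-f(t_c)=\tfrac{1}{3d_0^3}(w-t_c)^3+O((w-t_c)^4)$.

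\textbf{Local change of variables.} On a small neighbourhood of $t_c$ I substitute $w=t_c+\tilde w/(d_0 n^{1/3})$ and $z=t_c+\tilde z/(d_0 n^{1/3})$. The cubic term becomes $\tfrac13 \tilde w^3/n$, so multiplying by $n$ yields $\tfrac13\tilde w^3-\tfrac13\tilde z^3$. The shifts $\xi_n,\tau_n$ built into the scaling \eqref{coord} via $\Delta x_j^{(n)}$ appear through $Q_n$ and contribute, after the conjugation by $p_n$, the linear factors $e^{-\xi\tilde w+\tau \tilde z}$ in the limit; here one uses $c_0=2(\chi_c-t_c)/d_0$ to match coefficients. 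The ratio $q_n(nw;r)/q_n(nz;s)$, which is a product of $r$ (resp.\ $s$) factors around $nt_c$, limits to $\tilde w^r/\tilde z^s$ after the change of variables, and the Jacobian together with the $1/(w-z)$ factor produces $d z\, dw/(\tilde w-\tilde z)$. The factor $1/(z-n^{-1}x_2)$ limits to a constant that is absorbed. This recovers exactly the integrand of \eqref{CAKernel}.

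\textbf{Contour deformation and tail estimates.} The main technical point is to deform $\Gamma_n,\gamma_n$ (resp.\ $\tilde\Gamma_n,\tilde\gamma_n$) to contours that pass through $t_c$ and, outside a shrinking neighbourhood of $t_c$, satisfy a global descent condition $\mathrm{Re}(f_n(w)-f_n(t_c))\le -\delta$ on the $w$-contour and the opposite inequality on the $z$-contour. Locally near $t_c$, the steepest descent directions for $\tfrac13 \tilde w^3$ emerge at angles $\pm\pi/3$ from the real axis, which is precisely how $\mathscr{L}_L$, $\mathscr{L}_R$, $\mathscr{C}_{in}$ and $\mathscr{C}_{out}$ are defined in figure \ref{figCuspContour}. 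Existence of such global descent contours for $f$ follows from the sign information in Lemma \ref{Derivative3} and the fact that all cusps point into $\LL$; for $f_n$ the same contour works for $n$ large by uniform convergence (Lemma \ref{UniformConv}) and Assumption \ref{A4}. The part of the contours outside the scale $n^{-1/3}$ neighbourhood of $t_c$ contributes $O(e^{-c n^\alpha})$ for some $c,\alpha>0$, and hence vanishes in the limit. This will be the principal obstacle: verifying that the global contour can be chosen to respect the pole constraints (enclosing $\{\beta_i^{(n)}\ge nt_1^{(n)}\}$ but not $\{\beta_i^{(n)}\le nt_c^{(n)}+s\}$, etc.) while still descending from $t_c$.

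\textbf{Residue contribution and weak convergence.} The indicator split between the two lines of \eqref{lemCancelKernel} is matched to the $-1_{\tau\ge\xi}1_{s>r}(\tau-\xi)^{s-r-1}/(s-r-1)!$ term of $\mathcal{K}_{CA}$ via the residue at $w=z$ picked up when moving from $\mathscr{L}_R+\mathscr{C}_{in}$ across $\mathscr{L}_L+\mathscr{C}_{out}$ in the limit; the residue is the polynomial appearing in \eqref{CAKernel}, as verified directly from the ratio $q_n(nw;r)Q_n(nw;\Delta x_1^{(n)})/\bigl(q_n(nw;s)Q_n(nw;\Delta x_2^{(n)})\bigr)$ after Taylor expansion. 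Finally, to upgrade pointwise/uniform convergence of the kernel on compact sets to the weak convergence statement \eqref{Weaklimit}, I expand the Fredholm determinant as
\begin{equation*}
\det(I-\phi K)_{L^2} = \sum_{k\ge 0} \frac{(-1)^k}{k!}\int \det\bigl(K((\xi_i,r_i),(\xi_j,r_j))\bigr)_{i,j=1}^k \prod_\ell \phi(\xi_\ell,r_\ell)\,d\xi_\ell,
\end{equation*}
and apply dominated convergence using a Hadamard-type bound for the entries, which follows from the integral representation \eqref{CAKernel} and the exponential decay of the saddle point integrand along the descent contours uniformly in $n$. This gives term-by-term convergence of the Fredholm series and completes the proof.
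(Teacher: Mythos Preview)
Your overall strategy is correct and matches the paper's approach, but there is a genuine gap at precisely the point you flag as the ``principal obstacle'': you do not actually explain how to produce contours that (a) respect the pole constraints coming from $q_n$ and $E_n$, and (b) limit to the four-contour structure $\mathscr{L}_L+\mathscr{C}_{out}$ and $\mathscr{L}_R+\mathscr{C}_{in}$. In your sketch the local analysis near $t_c$ would naturally give only two contours (the $\pi/3$-rays), and you never say where the small circles $\mathscr{C}_{in},\mathscr{C}_{out}$ around the origin come from. These circles are essential: when $r<0$ or $s>0$ the factor $w^r/z^s$ has poles at the origin, and the circles carry nontrivial contributions (indeed they encode the $r$-Airy and polynomial pieces in Proposition~\ref{rAirycorrelationformula}). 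The paper resolves this via an intermediate step (Proposition~\ref{IntKernel3}) that rewrites the kernel, \emph{before} any asymptotics, as $-1_{x_1\ge x_2}B_n+\tilde K_{\mathcal R}^{(n)}$ with $\tilde K_{\mathcal R}^{(n)}$ a double integral over a \emph{sum} of contours $\Gamma_n^1+\Gamma_n^2$ and $\gamma_n^1+\gamma_n^2$: the large contours $\Gamma_n^1,\gamma_n^1$ around $[t_1^{(n)},b_n]$ become $\mathscr{L}_L,\mathscr{L}_R$ after deformation to the global ascent/descent paths of $f$, while the small circles $\Gamma_n^2,\gamma_n^2$ around $t_c^{(n)}$ become $\mathscr{C}_{out},\mathscr{C}_{in}$. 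Establishing this decomposition requires a nontrivial sequence of contour manipulations (including, in the $x_1\ge x_2$ case, pushing a contour through infinity using $\nu_{n,2}(\R)>0$), and several residue cancellations that are not automatic.

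Relatedly, your description of the indicator term is not how it arises. The term $-1_{\tau\ge\xi}1_{s>r}(\tau-\xi)^{s-r-1}/(s-r-1)!$ does not come from ``moving $\mathscr{L}_R+\mathscr{C}_{in}$ across $\mathscr{L}_L+\mathscr{C}_{out}$ in the limit''; it is the limit of the explicit single-integral term $B_n$ in \eqref{ChangedContourKernel3}, which itself is produced at the finite-$n$ level by the $w=z$ residue picked up during the contour rearrangement of Proposition~\ref{IntKernel3} (specifically when comparing $J_{\Gamma_n^2\tilde\gamma_n^2}$ and $J_{\Gamma_n^2,-\gamma_n^2}$). Once the decomposition of Proposition~\ref{IntKernel3} is in place, your local analysis, tail estimates via the global descent contours of Lemma~\ref{GlobalDescentAsympFunc}, and Hadamard/dominated-convergence argument for \eqref{Weaklimit} are all in line with the paper.
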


\begin{ex}
\label{exPetrov}
Let the limiting measure $d\mu(t)=\chi_{[-1-a,-1]}(t)dt+\chi_{[0,a]}(t)dt+\chi_{[2a,1]}(t)dt$ where $a=\frac{-3+\sqrt{17}}{4}\approx 0.28$. In particular $\Vert \mu\Vert=1$ and $t_c=0\in R_2$. Let $t_1=a$. Then, with $I=(0,a)$,
\begin{align*}
C_I(t)&=\log\bigg\vert \frac{t+1+a}{t+1}\bigg\vert +\log\bigg\vert \frac{t-2a}{t-1}\bigg\vert.
\end{align*}
From equation (\ref{EqCusp2}) we then get
$
(\chi_{\mathcal{E}}(0),\eta_{\mathcal{E}}(0))=\bigg(\frac{1}{2(1+a)},1-\frac{1}{2(1+a)}\bigg).
$
Using that $a<\frac{1}{2(1+a)}<2a$ and (\ref{AsympFunc1}), we obtain
\begin{align*}
f''(0;\chi_{\mathcal{E}}(0),\eta_{\mathcal{E}}(0))=\int_{-1-a}^{-1}\frac{ds}{s^2}-\int_{a}^{\chi_{\mathcal{E}}(0)}\frac{ds}{s^2}+\int_{2a}^{1}\frac{ds}{s^2}
=\frac{1-a-8a^2-4a^3}{2a(1+a)}=0,
\end{align*}
since $a$ is a root of the polynomial $1-x-8x^2-4x^3$. Thus Assumption \ref{A2} is satisfied. We will now construct a sequence of empirical measures $\{\mu_n\}_n$ that satisfies Assumptions \ref{A1},\ref{A3} and \ref{A4}. 
Let
\begin{align*}
\mu_n=\frac{1}{n}\sum_{k=\lfloor -n(1-a)\rfloor}^{ -n+d_n}\delta_{k/n}+\frac{1}{n}\sum_{k=0}^{\lfloor na\rfloor}\delta_{k/n}+\frac{1}{n}\sum_{k=\lfloor 2na\rfloor}^{n}\delta_{k/n}
\end{align*}
where $d_n$ is chosen so that $ -n+d_n-\lfloor -n(1-a)\rfloor+\lfloor na\rfloor+n-\lfloor 2na\rfloor+3=n$. 
Clealy, $\mu_n\rightharpoonup \mu$, so Assumption \ref{A1} is satisfied. Now,
\begin{align*}
\text{supp}(\mu_n)=\frac{1}{n}\{\lfloor -n(1-a)\rfloor,...,-n+d_n\}\bigcup\frac{1}{n}\{0,...,\lfloor na\rfloor\}\bigcup\frac{1}{n}\{\lfloor 2na\rfloor,...,n\},
\end{align*}
so by construction $\{\mu_n\}_n$ satisfies Assumption \ref{A3}. Now, 
\begin{align*}
\nu_n=\frac{1}{n}\sum_{k=\lfloor -n(1-a)\rfloor}^{ -n+d_n}\delta_{k/n}-\frac{1}{n}\sum_{k=\lfloor na\rfloor+1}^{\big\lfloor \frac{n}{2(1+a)}\big\rfloor}\delta_{k/n}+\frac{1}{n}\sum_{k=\lfloor 2na\rfloor}^{n}\delta_{k/n},
\end{align*}
and thus
\begin{align}
\label{exConvf1}
f_n'(w)&=\int_{\R}\frac{d\nu_n(t)}{w-t}=\frac{1}{n}\sum_{k=\lfloor -n(1-a)\rfloor}^{ -n+d_n}\frac{1}{w-k/n}-\frac{1}{n}\sum_{k=\lfloor na\rfloor+1}^{\big\lfloor \frac{n}{2(1+a)}\big\rfloor}\frac{1}{w-k/n}+\frac{1}{n}\sum_{k=\lfloor 2na\rfloor}^{n}\frac{1}{w-k/n},
\end{align}
and
\begin{align}
\label{exConvf2}
f_n''(w)&=-\int_{\R}\frac{d\nu_n(t)}{(w-t)^2}=-\frac{1}{n}\sum_{k=\lfloor -n(1-a)\rfloor}^{ -n+d_n}\frac{1}{w-k/n}+\frac{1}{n}\sum_{k=\lfloor na\rfloor+1}^{\big\lfloor \frac{n}{2(1+a)}\big\rfloor}\frac{1}{(w-k/n)^2}-\frac{1}{n}\sum_{k=\lfloor 2na\rfloor}^{n}\frac{1}{(w-k/n)^2}.
\end{align}
Choose $r<a$. Then for $n$ large enough, $\frac{1}{w-t}$ and $-\frac{1}{(w-t)^2}$ are continuously differentiable in $B(0,r)$. Hence (\ref{exConvf1}) and (\ref{exConvf2}) are Riemann sums of smooth functions with equidistant partitions. Thus, there exists a constant $C>0$, such that
\begin{align*}
\vert f_n'(z)-f'(z)\vert \leq \frac{C}{n},
\end{align*}
holds uniformly in $B(0,r)$ for $n$ large enough. In particular Assumption \ref{A4} is satisfied. This example shows that we indeed can get the Cusp-Airy limit in a natural model of the type considered in \cite{Pet12}.
\end{ex}

\begin{rem}
The regularity assumption on  the sequence of empirical measures made in Assumption \ref{A3} are necessary for Theorem 1 to hold, and cannot be substantially relaxed. If one in particular try to perform the cancellation of factors as in Lemma \ref{RedCancel} without Assumption \ref{A3}, one immediately sees that $q_n(w;r)\neq1_{r>0}\prod_{k=nt_c^{(n)}+1}^{nt_c^{(n)}+r}(w-k)^{-1}+1_{r=0}+1_{r<0}\prod_{k=nt_c^{(n)}+r+1}^{nt_c^{(n)}}(w-k)$, in general, and that $q_n(w;r)$ would also depend on the sequence $\{\beta_i^{(n)}\}_n$. In particular the limit in Theorem \ref{thm} need not exist.
\end{rem}

\subsection{Random Top Line Measure}

Up to now we have assumed that the top line configuration of \emph{yellow} particles is fixed. However, for many models it is natural not to assume that the top line configuration is fixed, but instead is a random particle process.


Let $\Sigma\subset \R$ be finite union of closed, bounded intervals and write $\Sigma_n=\Z\cap n\Sigma$. Let $\mathcal{X}^{(n)}$ denote the set 
\begin{equation*}
\mathcal{X}^{(n)}=\{\beta^{(n)}\in\Sigma_n^n\,;\,\beta_1^{(n)}<\dots <\beta_n^{(n)}\}.
\end{equation*}
We will call $\mathcal{X}^{(n)}$ the set of all admissible top line configurations. Note that $\mathcal{X}^{(n)}$ is a finite set. We will now assume that we have a probability distribution
$p^{(n)}(\beta_1^{(n)},...,\beta_n^{(n)})$ on $\mathcal{X}^{(n)}$. Extended to $\Sigma_n^n$, we assume that $p^{(n)}(\beta_1^{(n)},...,\beta_n^{(n)})$ is a symmetric function which vanishes if $\beta_i^{(n)}=\beta_j^{(n)}$  for some $i\neq j$. Let $\mathbf{P}^{(n)}$ denote the probability and $\mathbf{E}^{(n)}$ the corresponding expectation given by
$p^{(n)}$. Also, let $\mathbb{E}_{\beta^{(n)}}$ denote the expectation with respect to the red particles in the uniform interlacing with fixed top line $\beta^{(n)}$ that we studied above.

Let $\mu$ with $\text{supp\,}\mu\subseteq\Sigma$ be given and let $f$ be defined by (\ref{eqasymptoticFunction}) as previously. We assume that Assumption \ref{A2} holds with this $f$. In order to transfer the Main theorem to the case with a random top line we will use the following assumption.
\begin{Assump}
\label{A5}
For each $n\ge 1$ there is a set $\mathcal{X}_{reg}^{(n)}\subseteq \mathcal{X}^{(n)}$ of regular top line configurations such that
the following holds:
\begin{itemize}
\item[(i)] $\mathbf{P}^{(n)}[\mathcal{X}_{reg}^{(n)}]\to 1$ as $n\to\infty$,

\item[(ii)] If $\beta^{(n)}\in\mathcal{X}_{reg}^{(n)}$, $n\ge 1$ is any sequence of regular top line configurations, we define $\mu_n$ and $f_n$ as previously. Then $\mu_n$ and $f_n$ satisfy Assumptions \ref{A1}, \ref{A3} and \ref{A4} above.
\end{itemize}
\end{Assump}

We now can now give a version of Theorem \ref{thm} when we have a random top line measure.
\begin{Thm}
\label{thm2}
Consider uniform interlacing with a random top line given by $p^{(n)}$ as above, and consider the red particle point process. Let $(\xi^r_j,r)$ be the rescaled coordinates for particles on line $r$. Fix $r_1,\dots,r_M\in\Z$ and let $\phi\,:\,\R\times\{r_1,\dots,r_M\}\to[0,1]$ be a bounded measurable function with compact support. Assume that Assumption \ref{A5} holds. Then

\begin{align} 
\lim_{n\to \infty}\mathbf{E}_n\bigg[\mathbb{E}_{\beta^{(n)}}\bigg[\prod_{r\in\{r_1,\dots,r_M\}}\prod_{j}(1-\phi(\xi_j^r,r))\bigg]\bigg]=\det(I-\phi \mathcal{K}_{CA})_{L^2(\R\times
\{r_1,...,r_M\})}.
\end{align} 
\end{Thm}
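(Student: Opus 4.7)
The plan is to condition on the realized top line and reduce to Theorem \ref{thm}. Define
\begin{equation*}
F_n(\beta^{(n)}) := \mathbb{E}_{\beta^{(n)}}\Big[\prod_{r\in\{r_1,\dots,r_M\}}\prod_{j}(1-\phi(\xi_j^r,r))\Big], \qquad D := \det(I-\phi \mathcal{K}_{CA})_{L^2(\R\times \{r_1,\dots,r_M\})},
\end{equation*}
so that the left-hand side of the theorem is $\mathbf{E}^{(n)}[F_n(\beta^{(n)})]$ and the goal is $\mathbf{E}^{(n)}[F_n]\to D$. Since $\phi$ takes values in $[0,1]$ and has compact support, only finitely many factors in the product are different from $1$ and each lies in $[0,1]$, so $F_n(\beta^{(n)})\in[0,1]$ uniformly in $n$ and in $\beta^{(n)}\in\mathcal{X}^{(n)}$.

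First I would split $\mathbf{E}^{(n)}[F_n]=\mathbf{E}^{(n)}[F_n \mathbf{1}_{\mathcal{X}_{reg}^{(n)}}]+\mathbf{E}^{(n)}[F_n\mathbf{1}_{\mathcal{X}^{(n)}\setminus\mathcal{X}_{reg}^{(n)}}]$. The second term is bounded in absolute value by $\mathbf{P}^{(n)}[\mathcal{X}^{(n)}\setminus\mathcal{X}_{reg}^{(n)}]$, which tends to $0$ by Assumption \ref{A5}(i), so it vanishes in the limit. For the first term, the key claim is the uniform bound
\begin{equation*}
\sup_{\beta\in\mathcal{X}_{reg}^{(n)}}\big|F_n(\beta)-D\big|\longrightarrow 0 \quad\text{as }n\to\infty,
\end{equation*}
which, combined with $\mathbf{P}^{(n)}[\mathcal{X}_{reg}^{(n)}]\to 1$, yields $\mathbf{E}^{(n)}[F_n\mathbf{1}_{\mathcal{X}_{reg}^{(n)}}]\to D$ and hence the theorem.

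The uniform claim follows by a standard subsequence contradiction. Suppose it fails. Then there exist $\epsilon>0$, indices $n_k\to\infty$, and regular configurations $\beta^{(n_k)}\in\mathcal{X}_{reg}^{(n_k)}$ with $|F_{n_k}(\beta^{(n_k)})-D|\ge \epsilon$. Since $\mathbf{P}^{(n)}[\mathcal{X}_{reg}^{(n)}]\to 1$, the sets $\mathcal{X}_{reg}^{(n)}$ are non-empty for all large $n$, so we may extend to a full sequence $\tilde\beta^{(n)}\in\mathcal{X}_{reg}^{(n)}$ by choosing an arbitrary regular configuration for the missing indices. By Assumption \ref{A5}(ii), the empirical measures $\mu_n$ and exponent functions $f_n$ associated to $\tilde\beta^{(n)}$ satisfy Assumptions \ref{A1}, \ref{A3} and \ref{A4}; Assumption \ref{A2} is fixed and given. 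Theorem \ref{thm} therefore applies along $\tilde\beta^{(n)}$ and gives $F_n(\tilde\beta^{(n)})\to D$, contradicting $|F_{n_k}(\tilde\beta^{(n_k)})-D|\ge\epsilon$.

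The principal obstacle is this passage from the pointwise convergence along an arbitrary fixed regular sequence (the content of Theorem \ref{thm}) to a supremum statement on the combinatorially large finite set $\mathcal{X}_{reg}^{(n)}$; no quantitative rate of convergence is assumed in Theorem \ref{thm}, so quantitative arguments are unavailable. The subsequence extraction works precisely because Assumption \ref{A5}(ii) is formulated so that \emph{every} sequence of regular configurations satisfies the hypotheses of Theorem \ref{thm}. Once the uniform bound is in hand, the remainder is routine truncation on the low-probability event $\mathcal{X}^{(n)}\setminus \mathcal{X}_{reg}^{(n)}$ using $F_n\in[0,1]$.
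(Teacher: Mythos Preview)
Your proof is correct and follows essentially the same route as the paper: split off the non-regular event using $F_n\in[0,1]$ and Assumption \ref{A5}(i), then reduce the regular part to Theorem \ref{thm} via a worst-case sequence of regular configurations. The only cosmetic difference is that the paper constructs this sequence directly by taking the argmax (resp.\ argmin) of $F_n$ over the finite set $\mathcal{X}_{reg}^{(n)}$ and bounds the $\limsup$ and $\liminf$ separately, whereas you arrive at the same conclusion through a subsequence-contradiction argument; both exploit exactly the observation you highlight, namely that Assumption \ref{A5}(ii) is phrased so that \emph{every} sequence of regular configurations feeds into Theorem \ref{thm}.
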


\begin{proof}
We can write
\begin{align}\label{rantopline}
&\mathbf{E}_n\bigg[\mathbb{E}_{\beta^{(n)}}\bigg[\prod_{r\in\{r_1,\dots,r_M\}}\prod_{j}(1-\phi(\xi_j^r,r))\bigg]\bigg]\notag\\
&=\mathbf{E}_n\bigg[1_{\mathcal{X}_{reg}^{(n)}}\mathbb{E}_{\beta^{(n)}}\bigg[\prod_{r\in\{r_1,\dots,r_M\}}\prod_{j}(1-\phi(\xi_j^r,r))\bigg]\bigg]
+\mathbf{E}_n\bigg[1_{\mathcal{X}^{(n)}\setminus\mathcal{X}_{reg}^{(n)}}\mathbb{E}_{\beta^{(n)}}\bigg[\prod_{r\in\{r_1,\dots,r_M\}}\prod_{j}(1-\phi(\xi_j^r,r))\bigg]\bigg].
\end{align}
By Assumption \ref{A5} the second term in the right hand side of (\ref{rantopline}) goes to $0$ as $n\to\infty$. There is a $\tilde{\beta}^{(n)}$ such that 
\begin{equation*}
\max_{\beta^{(n)}\in\mathcal{X}_{reg}^{(n)}}\mathbb{E}_{\beta^{(n)}}\bigg[\prod_{r\in\{r_1,\dots,r_M\}}\prod_{j}(1-\phi(\xi_j^r,r))\bigg]
\end{equation*}
is assumed at $\beta^{(n)}=\tilde{\beta}^{(n)}$, since the maximum is over a finite set. By Assumption \ref{A5} we can apply the Main theorem to the sequence $\tilde{\beta}^{(n)}$
and thus
\begin{align*}
&\limsup_{n\to\infty}\mathbf{E}_n\bigg[1_{\mathcal{X}_{reg}^{(n)}}\mathbb{E}_{\beta^{(n)}}\bigg[\prod_{r\in\{r_1,\dots,r_M\}}\prod_{j}(1-\phi(\xi_j^r,r))\bigg]\bigg]
\le \limsup_{n\to\infty}\mathbb{E}_{\tilde{\beta}^{(n)}}\bigg[\prod_{r\in\{r_1,\dots,r_M\}}\prod_{j}(1-\phi(\xi_j^r,r))\bigg]
\\&=\det(I-\phi \mathcal{K}_{CA})_{L^2(\R\times
\{r_1,...,r_M\})}.
\end{align*}
We can do the analogous argument for the lower limes, and in this way we obtain the desired result.

\end{proof}

\begin{rem}
\label{remDOPE}
Interlacing particle systems with a random top line occur in certain types of lozenge tiling models. More precisely, we are interested in those tiling models that can be decomposed into two regions, such that after possibly adding \emph{virtual particles}, these the regions become interlacing regions of the type describe in section 1.2, glued together along a common line as depicted in figure \ref{figInterlacingDOPE}. 
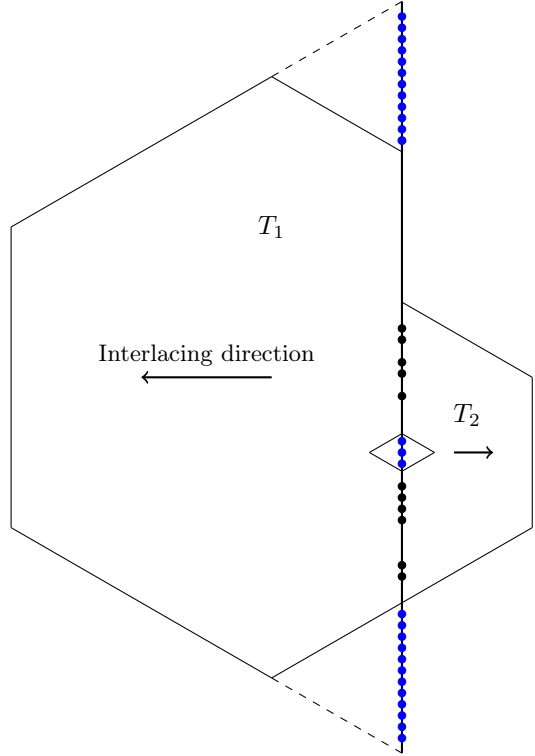
\begin{figure}[H]

\centering
\begin{tikzpicture}[xscale=1,yscale=1]
\draw (0,0)--(0,4);
\draw (0,4)--({2*sqrt(3)},6);
\draw ({2*sqrt(3)},6) -- ({3*sqrt(3)},5);
\draw ({3*sqrt(3)},5)-- ({3*sqrt(3)},3);
\draw ({3*sqrt(3)},3)-- ({4*sqrt(3)},2);
\draw ({4*sqrt(3)},2) -- ({4*sqrt(3)},0);
\draw ({4*sqrt(3)},0) -- ({2*sqrt(3)},-2);
\draw ({2*sqrt(3)},-2) -- (0,0);
\draw ({3*sqrt(3)},1.25)-- ({3.25*sqrt(3)},1);
\draw ({3.25*sqrt(3)},1)-- ({3*sqrt(3)},0.75);
\draw ({3*sqrt(3)},0.75)-- ({2.75*sqrt(3)},1);
\draw ({2.75*sqrt(3)},1) -- ({3*sqrt(3)},1.25);
\draw[thick] ({3*sqrt(3)}, 7) -- ({3*sqrt(3)},-3);
\draw[dashed] ({2*sqrt(3)},6) -- ({3*sqrt(3)},7);
\draw[dashed] ({2*sqrt(3)},-2) -- ({3*sqrt(3)},-3); 
\filldraw[blue] ({3*sqrt(3)},1) circle (0.5mm);
\filldraw[blue] ({3*sqrt(3)},1.15) circle (0.5mm);
\filldraw[blue] ({3*sqrt(3)},0.85) circle (0.5mm);

\filldraw[blue] ({3*sqrt(3)},5.15) circle (0.5mm);
\filldraw[blue] ({3*sqrt(3)},5.15) circle (0.5mm);
\filldraw[blue] ({3*sqrt(3)},5.30) circle (0.5mm);
\filldraw[blue] ({3*sqrt(3)},5.45) circle (0.5mm);
\filldraw[blue] ({3*sqrt(3)},5.60) circle (0.5mm);
\filldraw[blue] ({3*sqrt(3)},5.75) circle (0.5mm);
\filldraw[blue] ({3*sqrt(3)},5.9) circle (0.5mm);
\filldraw[blue] ({3*sqrt(3)},6.05) circle (0.5mm);
\filldraw[blue] ({3*sqrt(3)},6.20) circle (0.5mm);
\filldraw[blue] ({3*sqrt(3)},6.35) circle (0.5mm);
\filldraw[blue] ({3*sqrt(3)},6.5) circle (0.5mm);
\filldraw[blue] ({3*sqrt(3)},6.65) circle (0.5mm);
\filldraw[blue] ({3*sqrt(3)},6.8) circle (0.5mm);

\filldraw[blue] ({3*sqrt(3)},-{1.15}) circle (0.5mm);
\filldraw[blue] ({3*sqrt(3)},-{1.30}) circle (0.5mm);
\filldraw[blue] ({3*sqrt(3)},-{1.45}) circle (0.5mm);
\filldraw[blue] ({3*sqrt(3)},-{1.60}) circle (0.5mm);
\filldraw[blue] ({3*sqrt(3)},-{1.75}) circle (0.5mm);
\filldraw[blue] ({3*sqrt(3)},-{1.9}) circle (0.5mm);
\filldraw[blue] ({3*sqrt(3)},-{2.05}) circle (0.5mm);
\filldraw[blue] ({3*sqrt(3)},-{2.20}) circle (0.5mm);
\filldraw[blue] ({3*sqrt(3)},-{2.35}) circle (0.5mm);
\filldraw[blue] ({3*sqrt(3)},-{2.5}) circle (0.5mm);
\filldraw[blue] ({3*sqrt(3)},-{2.65}) circle (0.5mm);
\filldraw[blue] ({3*sqrt(3)},-{2.8}) circle (0.5mm);

\filldraw[black] ({3*sqrt(3)},1.75) circle (0.5mm);
\filldraw[black] ({3*sqrt(3)},2.05) circle (0.5mm);
\filldraw[black] ({3*sqrt(3)},2.2) circle (0.5mm);
\filldraw[black] ({3*sqrt(3)},2.5) circle (0.5mm);
\filldraw[black] ({3*sqrt(3)},2.65) circle (0.5mm);

\filldraw[black] ({3*sqrt(3)},0.55) circle (0.5mm);
\filldraw[black] ({3*sqrt(3)},0.4) circle (0.5mm);
\filldraw[black] ({3*sqrt(3)},0.25) circle (0.5mm);

\filldraw[black] ({3*sqrt(3)},0.1) circle (0.5mm);
\filldraw[black] ({3*sqrt(3)},-0.5) circle (0.5mm);
\filldraw[black] ({3*sqrt(3)},-0.65) circle (0.5mm);


\draw[<-,thick] ({1*sqrt(3)},2) -- ({2*sqrt(3)},2);
\draw ({1.5*sqrt(3)},2.3) node {\small Interlacing direction};
\draw ({2*sqrt(3)},4) node { $T_1$};
\draw[->,thick] ({3.4*sqrt(3)},1) -- ({3.7*sqrt(3)},1);
\draw ({3.5*sqrt(3)},1.5) node { $T_2$};

\end{tikzpicture}
\caption{\label{figInterlacingDOPE} Decomposition of a polygon into two interlacing regions $T_1$ and $T_2$ glued together along the thick black line. The blue dots indicate the positions of \emph{virtual} particles/tiles and the black dots indicate the positions of ordinary particles/tiles.}
\end{figure}

Recall that the number of interlacing configurations with a given top line configuration $(y_1,y_2,....,y_N)\in \Z^N$ is given by Weyl's dimension formula in \cite{Weyl} for the irreducible characters of the unitary group $U(n)$,  
\begin{align}
\label{nofInterlacing}
\mathcal{N}^{\sharp}(y_1,...,y_N)&=\frac{\prod_{1\leq i<j\leq N}\vert y_i-y_j\vert}{\prod_{1\leq i<j\leq N}\vert i-j\vert}:=\frac{1}{2C_N}\prod_{\substack{1\leq i,j\leq N\\ i\neq j}}\vert y_i-y_j\vert.
\end{align}
Let $(y_1,y_2,....,y_N)$ be the positions of the particles/tiles on the intersecting thick black line as in figure \ref{figInterlacingDOPE}. Let $\mathcal{V}$ be the index set for the \emph{virtual} or \emph{frozen} particles and let $\mathcal{F}$ be the index set for the \emph{free} particles, so that $\vert \mathcal{V}\vert+\vert \mathcal{F}\vert=N$, and $y_i$ is a \emph{virtual} particle if $i\in \mathcal{V}$ and \emph{free} otherwise. Assume that $\vert \mathcal{F}\vert=n$, and let $g:\{1,....,n\}\to \mathcal{F}$ be a set bijection such that $x_i:=y_{g(i)}$, and $x_1<x_2<...<x_n$. Furthermore, the \emph{virtual} particles are densely packed, which implies that they vill form wedge shaped frozen regions. However, the fact that the two interlacing regions $T_1$ and $T_2$ need not be symmetrical implies that we need not have frozen regions on both sides of the intersecting black line. Let $\mathcal{V}_L\subseteq \mathcal{V}$  be the index set of those \emph{virtual} particles such that they form a frozen region to the left, and let $\mathcal{V}_R\subseteq \mathcal{V}$  be the index set of those \emph{virtual} particles such that they form a frozen region to the right. Then by (\ref{nofInterlacing}), the number of interlacing configuration with a given fixed configuration of free particles at positions $(x_1,...,x_n)$ is given by
\begin{align*}
&\mathcal{N}^{\sharp}(x_1,...,x_n)=\frac{1}{2C_N}\prod_{1\leq i<j\leq N}\vert y_i-y_j\vert\\
&=\frac{1}{2C_N}\prod_{\substack{i,j\in \mathcal{F}\\i\neq j}}\vert y_i-y_j\vert^2\prod_{\substack{i\in \mathcal{F}\\j\in  \mathcal{V}_L}}\vert y_i-y_j\vert\prod_{\substack{i\in \mathcal{F}\\ j\in \mathcal{V}_R}}\vert y_i-y_j\vert\prod_{\substack{i,j\in\mathcal{V}_L\\i\neq j}}\vert y_i-y_j\vert\prod_{\substack{i,j\in\mathcal{V}_R\\i\neq j}}\vert y_i-y_j\vert\\
&=\frac{1}{2C_N}\prod_{\substack{1\leq k,l\leq n\\k\neq l}}\vert x_k-x_l\vert^2\prod_{k=1}^n\prod_{j\in \mathcal{V}_L}\vert x_k-y_j\vert\prod_{k=1}^n\prod_{j\in \mathcal{V}_R}\vert x_k-y_j\vert\prod_{\substack{i,j\in\mathcal{V}_L\\i\neq j}}\vert y_i-y_j\vert\prod_{\substack{i,j\in\mathcal{V}_R\\i\neq j}}\vert y_i-y_j\vert.
\end{align*}
Now, consider the set of all possible lozenge tessellations of the the original polygon. One easily sees that each such tessellation is in a bijective correspondence with two interlacing configurations on $T_1$ and $T_2$ with the same configuration of \emph{free} particles $x^{(n)}=(x_1,x_2,...,x_n)$ on their common top line. In particular, 
$x_i\in \Sigma_n$ for each $i=1,...,n$, where $\Sigma$ is a finite union of intervals. The set $\mathcal{X}_n$ then denotes the set of all configurations of \emph{free} particles. Consider the set of all lozenge tessellations of the polygon with uniform distribution. Then this induces a probability distribution on $\mathcal{X}_n$, given by
\begin{align*}
p^{(n)}(x_1,....,x_n)&=\mathbb{P}[\text{particles at positions $x_1,x_2,...,x_n$}]=\frac{\mathcal{N}^{\sharp}(x_1,...,x_n)}{\displaystyle\sum_{(x_1,...,x_n)\in \mathcal{X}_n}\mathcal{N}^{\sharp}(x_1,...,x_n)}.
\end{align*}
Let 
\begin{align*}
w_n(x):=\prod_{j\in \mathcal{V}_L}\vert x-y_j\vert\prod_{j\in \mathcal{V}_R}\vert x-y_j\vert.
\end{align*}
Then
\begin{align}
\label{DOPEdist}
p^{(n)}(x_1,....,x_n)=\frac{1}{Z_n}\prod_{1\leq i<j\leq n}(x_i-x_j)^2\prod_{i=1}^{n}w_n(x_i)=\frac{1}{Z_n}\Delta_n(x)^2\prod_{i=0}^nw_n(x_i),
\end{align}
where $Z_n$ is a normalization constant, and $\Delta_n(x)$ is the Vandermonde determinnat. Associated with a particular weight function $w_n(x)$ is a class of discrete orthogonal polynomials $\{p_{n,k}(x)\}_k$ satisfying\begin{align}
\label{DOPEpoly}
\sum_{x\in \Sigma_n}p_{n,k}(x)p_{n,l}(x)w_n(x)=\delta_{kl}.
\end{align}
Such particle processes are called \emph{discrete orthogonal polynomial ensembles}, DOPE, and have been studied e.g. in \cite{J.Baik}. In particular if one considers the random empirical measure $\mu_n=\frac{1}{n}\sum_{i=1}^n\delta_{x_i}$, then $\mu_n\rightharpoonup \mu_V^{\lambda}$, where the measure $\mu_V^{\lambda}\in \mathcal{M}_1^{\lambda}(\Sigma)$ is the unique solution of the constrained variational problem 

\begin{align}
\label{VarProb}
\min_{\nu\in \mathcal{M}_1^{\lambda}(\Sigma)}\{I_V[\nu]\}=\min_{\nu\in \mathcal{M}_1^{\lambda}(\Sigma)}\bigg\{\int_{\Sigma\times \Sigma}\log\vert x-y\vert^{-1}d\nu(x)d\nu(y)+\int_{\Sigma}V(x)d\nu(x)\bigg\},
\end{align}
where $V(x)=\lim_{n\to \infty}-n^{-1}\log(w_n(x))$. In the case of those problems originating from a random tiling model as above, the potential will be of the form
\begin{align}
\label{TilingPotential}
V(x)=U^{\chi_{I^r}}(x)+U^{\chi_{I^l}}(x),
\end{align}
where $I^r$ and $I^l$ are finite unions of closed intervals of $\R$, such that $(I^r\cup I^l)^{\circ}\cap\Sigma^{\circ}=\varnothing$. In particular $\lim_{n\to \infty}-n^{-1}\log(w_n(x))=V(x)$ uniformly on compact subsets of $\Sigma$ not containing the subset $\dv \Sigma\cap\dv (I^r\cup I^l)$. It follows from Theorem 2.1 in \cite{Dragnev} that the  minimizer $\mu_V^{\lambda}$ of the variational problem (\ref{VarProb}) has a unique the characterization in terms of the following variational inequalities.

There exists a constant $F_V^{\lambda}$ such that 
\begin{align}
\label{VarIneq1}
&2U^{\mu_V^{\lambda}}(x)+V(x)\geq F_V^{\lambda}\quad \text{for all $x\in\Sigma\backslash$supp$(\mu_V^{\lambda}):=I_V$}\\
\label{VarIneq2}
&2U^{\mu_V^{\lambda}}(x)+V(x)= F_V^{\lambda}\quad \text{for all $x\in$supp$(\mu_V^{\lambda})\cap$supp$(\lambda-\mu_V^{\lambda}):=I_B$}\\
\label{VarIneq3}
&2U^{\mu_V^{\lambda}}(x)+V(x)\leq F_V^{\lambda}\quad \text{for all $x\in\Sigma\backslash$(supp$(\mu_V^{\lambda})\cap$supp$(\lambda-\mu_V^{\lambda})):=I_S$}.
\end{align}
It follows from general \emph{large deviation estimates} for DOPE that we can define $\mathcal{X}_{reg}^{(n)}$ so that Assumption \ref{A3} is satisfied for large classes of potentials, see \cite{Feral}. In particular, in the class of potentials of the form (\ref{TilingPotential}) and associated weights $w_n$ coming from certain tiling models, this is proved in the upcoming review article \cite{Duse15d}. 

It is shown in \cite{Duse15d} that if $R_1\cup R_2\neq \varnothing$, with $\mu=\mu_V^{\lambda}$, then for $t_c\in R_1\cup R_2$, we must have $t_c\in \dv\Sigma$. Thus, if in particular $t_c\in R_2$, then there exists an interval $[t_2,t_c]$, such that $[t_2,t_c]\cap \text{supp}(\mu_n)=\varnothing$. This implies that the sequence of empirical measures $\{\mu_n\}_n$ automatically satisfies the left-sided part of regularity assumption in Assumption \ref{A3}.
Let 
\begin{align}
R_m^{(n)}(x_1,...,x_m)=\mathbb{P}[\text{there are particles at each of the nodes $x_1,...,x_m$}]
\end{align}
be the $m$:th correlation kernel with $1\leq m\leq n$. If the potential $V(x)$ is analytic in a complex convex neighbourhood of $\Sigma$, then it is proven in Theorem 3.3 and Theorem 3.5 in \cite{J.Baik} that for all subsets $V\in I_V$ and $S\in I_S$ such that $d_H(V,I_B)>0$ and $d_H(S,I_B)>0$ there exists constants $K=K(d_H(V,I_B))>0$ and $L=L(d_H(S,I_B))>0$, such that 
\begin{align}
\max_{x_1,...,x_m\in V}\vert R_m^{(n)}(x_1,...,x_m)\vert \leq C_V\frac{e^{-mnK_V}}{n^m}
\end{align}
and 
\begin{align}
\max_{x_1,...,x_m\in S}\vert 1-R_m^{(n)}(x_1,...,x_m)\vert \leq C_S\frac{e^{-mnL_S}}{n^m}
\end{align}
for some positive constants $C_V$ and $C_S$. In particular this implies that we can define $\mathcal{X}_{reg}^{(n)}$ so that Assumption \ref{A3} holds. Unfortunately, the class of potentials given by (\ref{TilingPotential}) need not be analytic in a neighborhood of $\Sigma$ due to the fact that we may have $\dv \Sigma\cap\dv (I^r\cup I^l)\neq 0$. However, the we believe that by additional local arguments around such points, one may generalize the methods used in the book \cite{J.Baik} to prove that Theorem 3.3 and Theorem 3.5 in \cite{J.Baik} also hold in this case. In particular, the effect of non-analyticity should only matter on a very small neighborhood around the points of $\dv \Sigma\cap\dv (I^r\cup I^l)$. Since $t_c\notin \dv \Sigma\cap\dv (I^r\cup I^l)$, and therefore is a macroscopic distance away from $\dv \Sigma\cap\dv (I^r\cup I^l)$, the effects of non-analyticity should not matter. 

Finally, let $A\subset \R$ be an interval and let $K$ be a compact subset of $\{z\in \C: d_H(z,A\cap \text{supp}(\mu_n))>0\}$. Then, for a ``generic'' potential $V$, and a sequence of weight functions $\{w_n(x)\}_n$, such that \newline
$\lim_{n\to \infty}-n^{-1}\log(w_n(x))=V(x)$ uniformly on compact subsets of $\Sigma$, we should have for every $\eps>0$ and every $z\in K$ that
\begin{align}
\lim_{n\to\infty}\mathbb{P}\bigg[\bigg\vert\int_{\R}\frac{\chi_A(t)d(\mu_n-\mu)(t)}{z-t}\bigg\vert>\frac{1}{n^{1-\eps}}\bigg]=0.
\end{align}
In particular, this implies that we can define $\mathcal{X}_{reg}^{(n)}$ so that Assumption \ref{A4} holds. This result should also follow from Riemann-Hilbert methods adapted to the class of potentials derived from tiling models, with the caveat that some of the local arguments need to be modified due to the possible non-analyticity of $V(x)$ at some of the points of $\dv \Sigma$. This questions has also been studied in \cite{Borodin} for very similar models by means of \emph{discrete loop equations}, though the assumptions in \cite{Borodin} are not satisfied in our models, and can therefore not be applied directly. However, in the special case when $\Sigma$ is an interval, the results in \cite{Borodin} do apply. In particular they apply to the model in Example \ref{CardiodEx}.
\end{rem}
\begin{rem}\label{remEx}
Let $z^{(r)}=(y_1^{(r)},...,y_r^{(r)},x_n+n-r-1,x_n+n-r-2,...,x_n)$. 
Then, using (\ref{DOPEdist}) the total probability distribution is given by
\begin{align*}
\nu_{tot}[(y^{1},...,y^{(n-1)},x^{(n)})]=\frac{1}{Z_{n,tot}}\Delta_n(x)^2\prod_{i=0}^nw_n(x_i)\prod_{r=0}^{n-1}
\det\left(1_{z_j^{(r+1)}\geq z_i^{(r)}}\right).
\end{align*}
Hence, by the Eynard-Mehta theorem, the total process is also a determinantal process. One could therefore try to derive the correlation kernel for the total process instead of the conditional process. However, the Eynard-Mehta formula for the kernel of this process
seems very difficult to analyze.
 In particular, we can not expect to get such a simple formula for the kernel as (\ref{Red}), as it would necessarily need to contain all the 
information about the DOPE, which is highly non-trivial.
\end{rem}

\begin{ex}
\label{CardiodEx}
Consider now the example discussed briefly in section \ref{sec:intro} see fig~\ref{figCardioid1}.
This can be approached via the tiling model illustrated in figure \ref{figCardioid2}. Elementary geometry gives $\delta=\frac{2}{\sqrt{3}}-2\kappa$, where $\kappa\in(0,1/\sqrt{3})$. In the figure we have added densely packed virtual particles in the intervals $[0,\kappa]$ and $[2\kappa+2\delta,3\kappa+2\delta]$. Moreover, the particles contained in the interval $[\kappa+\delta,2\kappa+2\delta]$ are distributed according to a \emph{discrete orthogonal polynomial ensemble}, DOPE, as is shown above. The fraction of particles contained in this interval as $n\to \infty$ equals $1-\sqrt{3}\kappa$. We will now make the symmetric parameter choice $\kappa=\delta=\frac{2}{3\sqrt{3}}$. Associated to the DOPE, there is an equilibrium measure with respect to an external field as above. See also Proposition 2.2 in \cite{Borodin}. Solving the minimization problem gives the density of the measure $\mu$,
\begin{align}
\rho(t)=\chi_{[0,\frac{2}{3\sqrt{3}}]}(t)+\chi_{[\frac{8}{3\sqrt{3}},\frac{10}{3\sqrt{3}}]}(t)+\phi(3t)\chi_{[\frac{4}{3\sqrt{3}},\frac{8}{3\sqrt{3}}]}(t),
\end{align}
where 
\begin{align}
\label{exEqDensity}
\phi(x)&=-\frac{\sqrt{(b-x)(x-a)}}{2\pi}\int_{0}^{2/\sqrt{3}}\frac{dt}{\sqrt{(t-a)(t-b)}(t-x)}-\frac{\sqrt{(b-x)(x-a)}}{\pi}\int_{4/\sqrt{3}}^{a}\frac{dt}{\sqrt{(t-a)(t-b)}(t-x)}\nonumber\\&+\frac{\sqrt{(b-x)(x-a)}}{2\pi}\int_{8/\sqrt{3}}^{10/\sqrt{3}}\frac{dt}{\sqrt{(t-a)(t-b)}}.
\end{align}
Here $(a,b)$ is the unique solution of 
\begin{align}
\label{exEqSupp1}
&-\frac{1}{2}\int_{0}^{2/\sqrt{3}}\frac{dt}{\sqrt{(t-a)(t-b)}}-\int_{4/\sqrt{3}}^{a}\frac{dt}{\sqrt{(t-a)(t-b)}}+\frac{1}{2}\int_{8/\sqrt{3}}^{10/\sqrt{3}}\frac{dt}{\sqrt{(t-a)(t-b)}}=0
\end{align}
and 
\begin{align}
\label{exEqSupp2}
&-\frac{1}{2}\int_{0}^{2/\sqrt{3}}\frac{tdt}{\sqrt{(t-a)(t-b)}}-\int_{4/\sqrt{3}}^{a}\frac{tdt}{\sqrt{(t-a)(t-b)}}+\frac{1}{2}\int_{8/\sqrt{3}}^{10/\sqrt{3}}\frac{tdt}{\sqrt{(t-a)(t-b)}}=1
\end{align}
satisfying $\frac{4}{\sqrt{3}}<a<b<\frac{8}{\sqrt{3}}$.
In particular it follows that $\rho\big\vert_{[\frac{4}{3\sqrt{3}},a]}\equiv1$. A direct verification that the Cauchy transform of $\rho$ satisfies Assumption \ref{A2} for $t_c=\frac{4}{3\sqrt{3}}$ is very difficult using (\ref{exEqDensity})-(\ref{exEqSupp2}) even in the case when the original polygon has an apparent symmetry. This is due to the artificial decomposition of the polygon into two interlacing regions which are glued together along a common boundary. After this decomposition has been done, the original symmetry is no longer apparent in the parametrization of the \emph{edge} $\mathcal{E}$. We will therefore prove that the asymptotic cusp condition holds by an indirect symmetry argument. Instead of considering figure \ref{figCardioid2}, we consider figure \ref{figCardioid3}. We see the yellow dashed line corresponds to the decomposition in figure \ref{figCardioid2} into two interlacing regions glued together along the yellow dashed line. However, we may equally well decompose our hexagon into two interlacing regions in blue particles glued together along the dashed blue line instead. Moreover there is bijective correspondence between each configuration of blue and yellow particles given by a reflection in the dashed black symmetry line. This implies in particular that the \emph{edge} $\EE$ must posses a reflection symmetry in the dashed black line. Now, the parametrization of the \emph{edge} $\mathcal{E}$, given by the density (\ref{exEqDensity}) and (\ref{Chi0})-(\ref{Eta1}) is smooth by Remark 2.1 in \cite{Duse14a}, in fact real analytic. Since the density $\rho$ of the limit measure $\mu$ satisfies $\rho\big\vert_{[\frac{4}{3\sqrt{3}},a]}=1$, it follows from Lemma 2.7 in \cite{Duse14a}, that $\mathcal{E}$, is tangent to the line $\chi+\eta-1=\frac{4}{3\sqrt{3}}$. However, by Theorem 2.3 in \cite{Duse14a}, the parametrization is injective. This together with the fact that the only singularities of the \emph{edge} $\EE$ are cusps implies the \emph{edge} $\mathcal{E}$ has a cusp at the tangent point with the line $\chi+\eta-1=\frac{4}{3\sqrt{3}}$. Lemma 2.8 and Lemma 2.9 in \cite{Duse14a}, implies that $f''(\frac{4}{3\sqrt{3}};\chi_\EE(\frac{4}{3\sqrt{3}}), \eta_\EE(\frac{4}{3\sqrt{3}}))=0$.
Hence, modulo the technical issues discussed above about how to construct $\mathcal{X}_{reg}^{(n)}$, we get a cusp-point where the scaling limit is given by the Cusp-Airy process.
\begin{figure}[H]
\centering
\begin{tikzpicture}[xscale=3/4,yscale=3/4]

\draw (-1.2,7.35) \lozy;
\draw (-0.7,7.35) \lozy;
\draw (-0.2,7.35) \lozy;
\draw (0.6,7.35) \lozy;
\draw (1.6,7.35) \lozy;
\draw (3.1,7.35) \lozy;
\draw (4.74,7.35) \lozy;
\draw ({4.74+0.5},7.35) \lozy;
\draw ({4.74+1},7.35) \lozy;
\draw ({4.74+1.5},7.35) \lozy;
\draw ({4.74+2},7.35) \lozy;
\draw ({4.74+2.5},7.35) \lozy;
\draw (-7.28,7.35) \lozy;
\draw ({-7.28+0.5},7.35) \lozy;
\draw ({-7.28+1},7.35) \lozy;
\draw ({-7.28+1.5},7.35) \lozy;
\draw ({-7.28+2},7.35) \lozy;
\draw ({-7.28+2.5},7.35) \lozy;
\filldraw[gray] ({4.74},7.794) circle (3pt);
\filldraw[gray] ({4.74+0.5},7.794) circle (3pt);
\filldraw[gray] ({4.74+1},7.794) circle (3pt);
\filldraw[gray] ({4.74+1.5},7.794) circle (3pt);
\filldraw[gray] ({4.74+2},7.794) circle (3pt);
\filldraw[gray] ({4.74+2.5},7.794) circle (3pt);
\filldraw[gray] ({-7.28},7.794) circle (3pt);
\filldraw[gray] ({-7.28+0.5},7.794) circle (3pt);
\filldraw[gray] ({-7.28+1},7.794) circle (3pt);
\filldraw[gray] ({-7.28+1.5},7.794) circle (3pt);
\filldraw[gray] ({-7.28+2},7.794) circle (3pt);
\filldraw[gray] ({-7.28+2.5},7.794) circle (3pt);
\draw (-1.2,7.794) circle (3pt);
\draw (-0.7,7.794) circle (3pt);
\draw (-0.2,7.794) circle (3pt);
\draw (0.6,7.794) circle (3pt);
\draw (1.6,7.794) circle (3pt);
\draw (3.1,7.794) circle (3pt);
\draw[thick] (-3,0)--(3,0)--(6,5.196)--(3,10.392)--(0,10.392)--(-1.5,7.794)--(-4.5,7.794)--(-6,5.196)--(-3,0);
\draw[thick,dashed] (-6,5.196)--(-7.5,7.794);
\draw[thick,dashed] (-7.5,7.794)--(-4.5,7.794);
\draw[thick,dashed] (6,5.196)--(7.5,7.794);
\draw[thick,dashed](-1.5,7.794)--(7.5,7.794);
\draw[thick](8,7.794) --(8.5,7.794);
\draw[thick](8,0) --(8.5,0);
\draw (-2.7,0) arc (0:120:3mm);
\draw (-2.6,0.5) node {$\frac{2\pi}{3}$};
\draw[<->] (8.25,0)--(8.25,7.794);
\draw (9.5,3.8) node {1 ($n$ lines)};
\draw (-7.2,7.794) arc (0:-60:3mm);
\draw (-7,7.5) node {$\frac{\pi}{3}$};
\draw (-4.9,6.65) node {$\kappa$};
\draw[thick,dashed] (-4.5,7.794) -- (-3,10.392)--(0,10.392);
\draw[thick,dashed,blue] (-3,10.392)--(3,0);
\draw (-3,7.5) node {$\delta$};
\draw (2,8.4) node {$\delta+\kappa$};

\end{tikzpicture}
\caption{\label{figCardioid2}}
\end{figure}

\begin{figure}[H]
\centering
\begin{tikzpicture}[xscale=0.5,yscale=0.5]

\draw[yellow,dashed,thick] (-{5*sqrt(3)},7) --(-{3*sqrt(3)},5);
\draw[yellow,dashed,thick] (-{3*sqrt(3)},5) -- ({sqrt(3)},1);
\draw[blue,dashed,thick] (-{2*sqrt(3)},4)  --(-{5*sqrt(3)},1);
\draw[blue,dashed,thick] (-{2*sqrt(3)},4)  --({sqrt(3)},7);
\draw[thick] (0,0) -- (0,4) --(-{sqrt(3)},5) -- (-{2*sqrt(3)},4) -- (-{3*sqrt(3)},5) -- (-{4*sqrt(3)},4) -- (-{4*sqrt(3)},0) -- (-{2*sqrt(3)},-2) --(0,0);
\draw[thick,dashed] (-1.73,5)--(-3.46,6) -- (-5.20,5);
\draw[thick,dashed] (-3.46,-3)--(-3.46,7);

\end{tikzpicture}
\caption{\label{figCardioid3}}
\end{figure}

\end{ex}

\subsection{Outline of the Paper}
We now give a brief outline of the paper. In section \ref{sec:cancellation} we prove Lemma \ref{RedCancel}. Due to the fact that $f'''(t_c,\chi_c,\eta_c)>0$, it will be necessary to change the integration contours of the correlation kernel given in Proposition \ref{RedCancel} to be able to deform the contours to the local steepest ascent/descent contours. This is done section \ref{sec:cancellation}.  In section \ref{sec:globalcontours} we prove the existence of global ascent/descent contours and finally in sections \ref{sec:estimates} and \ref{sec:local} we perform the local asymptotic analysis to arrive at the final result

In section \ref{sec:symmetry} we prove a certain reflection symmetry of the Cusp-Airy kernel in the axis $r=0$. In section \ref{sec:rAiry} we proceed to prove an alternative representation of the the Cusp-Airy kernel in terms of r-Airy integrals and certain polynomials. In section \ref{sec:kernel} we derive the integral representation of the kernel for the interlacing particle system, or the yellow particles and prove Proposition \ref{Transform}.

\section{Proof of Main Result}

\subsection{Discrete Cancellation in the Correlation Kernel}\label{sec:cancellation}
In this section we will perform the discrete cancellation of factors in the correlation kernel (\ref{Red}). In the continuum limit, this corresponds to the cancellation between the measures $\mu$ and $\lambda$ on the interval $[t_c, t_1]$, where $\mu\big\vert_{[t_c,t_1]}=\lambda$. The difference is that on the discrete level, the cancellation need no longer be exact. 

We now prove Lemma \ref{RedCancel}
\begin{proof}
Using Assumption \ref{A3} and the definition of $t_c^{(n)},t_1^{(n)}$ and $t_2^{(n)}$ we have
\begin{align*} 
&\frac{\prod_{i=1}^n(w-\beta_i^{(n)})}{\prod_{\substack{k=x_1+y_1-n}}^{x_1}(w-k)}=\frac{\prod_{\beta_i^{(n)}\leq nt_2^{(n)}}(w-\beta_i^{(n)})\prod_{k=nt_c^{(n)}}^{nt_1^{(n)}}(w-k)\prod_{nt_1^{(n)}<\beta_i^{(n)}}(w-\beta_i^{(n)})}{\prod_{\substack{k=x_1+y_1-n}}^{x_1}(w-k)}\\
&=\frac{\prod_{k=nt_c^{(n)}}^{nx_c^{(n)}}(w-k)}{\prod_{\substack{k=x_1+y_1-n}}^{x_1}(w-k)}\frac{\prod_{\beta_i^{(n)}\leq nt_2^{(n)}}(w-\beta_i^{(n)})\prod_{nt_1^{(n)}<\beta_i^{(n)}}(w-\beta_i^{(n)})}{\prod_{k=nt_1^{(n)}+1}^{nx_c^{(n)}}(w-k)}\\
&=\frac{\prod_{k=nt_c^{(n)}}^{x_1}(w-k)}{\prod_{\substack{k=x_1+y_1-n}}^{x_1}(w-k)}\frac{\prod_{k=nt_c^{(n)}}^{nx_c^{(n)}}(w-k)}{\prod_{\substack{k=nt_c^{(n)}}}^{x_1}(w-k)}\frac{\prod_{\beta_i^{(n)}\leq nt_2^{(n)}}(w-\beta_i^{(n)})\prod_{nt_1^{(n)}<\beta_i^{(n)}}(w-\beta_i^{(n)})}{\prod_{k=nt_1^{(n)}+1}^{nx_c^{(n)}}(w-k)}\\
&=q_n(w;r)Q_n(w;\Delta x_1^{(n)})E_n(w),
\end{align*}
where we have used the definitions of $q_n,Q_n$ and $E_n$. Similarly, we get
\begin{align*} 
&\frac{\prod_{i=1}^n(z-\beta_i^{(n)})}{\prod_{\substack{k=x_2+y_2-n}}^{x_2-1}(z-k)}=(z-x_2)q_n(z;r)Q_n(z;\Delta x_1^{(n)})E_n(z).
\end{align*}

Finally we rescale the integration variables according to $w\to \frac{1}{n}w$ and $z\to \frac{1}{n}z$. Set $\G_n=\frac{1}{n}\mathscr{Z}_n$, $\tilde{\G}_n=\frac{1}{n}\tilde{\mathscr{Z}}_n$, and $\g_n=\tilde{\g}_n=\frac{1}{n}\mathscr{W}_n$. 
Since we have cancelled out poles we may deform $\G_n$ to be a contour that contains the set $\{n^{-1}\beta_i^{(n)}\geq t_1^{(n)}\}$ but not the set $\{n^{-1}\beta_i^{(n)}\leq t_c^{(n)}+s\}$. Similarly, we may deform $\G'_n$ to contain the set $\{n^{-1}\beta_i^{(n)}\leq t_c^{(n)}+s\}$ but not the set $\{n^{-1}\beta_i^{(n)}\geq n^{-1}t_1^{(n)}\}$. 
\end{proof}

\subsection{Change of Integration Contours}\label{sec:contours}
In order to perform a steepest descent analysis in a later section, it will be necessary to change the integration contours so that they may be suitably deformed around the critical point.

\begin{Prop}
\label{IntKernel3}
The correlation kernel can be rewritten as
\begin{align} 
\label{ChangedContourKernel1}
K_{\mathcal{R}}^{(n)}((\xi_n,r)(\mu_n,s))&=-1_{x_1\geq x_2}B_n((x_1,y_1),(x_2,y_2))+\tilde{K}_{\mathcal{R}}^{(n)}((x_1,y_1),(x_2,y_2)),
\end{align}
where 
\begin{align}
\label{ChangedContourKernel2}
\tilde{K}_{\mathcal{R}}^{(n)}((x_1,y_1),(x_2,y_2))=\frac{1}{(2\pi i)^2}\oint_{\G_n^1+\G_n^2}dz \oint_{\g_n^1+\g_n^2}dw\frac{1}{z-n^{-1}x_2}\frac{q(nw;r)}{q(nz;s)}\frac{e^{n(f_n(w)-f_n(z))+n(h_{n,1}(w)-h_{n,2}(z)})}{w-z}
\end{align}
and
\begin{align}
\label{ChangedContourKernel3}
B_n((x_1,y_1),(x_2,y_2))=\frac{1}{2\pi i}\oint_{\Gamma_n^2}dz \frac{1}{z-n^{-1}x_2}\frac{q(nz;r)}{q(nz;s)}e^{n(h_{n,1}(z)-h_{n,2}(z))}.
\end{align}
Here the contours are as in figure \ref {figContour1}; more precisely:
\begin{itemize}
\item
$\G_n^1$ is a counter-clockwise oriented contour that contains the interval $[t_c^{(n)}-\max\{\vert r\vert, \vert s\vert\},b_n]$ and nothing else of the support of $\nu_n$. Furthermore, $\G_n^1$ contains the contours $\G_n^2$, $\g_n^2$ and $\g_n^3$\\
\item
$\G_n^2$ is a clockwise oriented contour that contains  $\g_n^2$ and the interval $[t_c^{(n)}-\max\{\vert r\vert, \vert s\vert\},t_c^{(n)}+\max\{\vert r\vert, \vert s\vert\}]$ and nothing else of the support of $\nu_n$.\\
\item
$\g_n^1$ is a clockwise oriented contour that contains the interval $[t_1^{(n)},b_n]$ and nothing else of the support of $\nu_n$.\\
\item
$\g_n^2$ is a clockwise oriented contour that contains the interval $[t_c^{(n)}-\max\{\vert r\vert, \vert s\vert\},t_c^{(n)}+\max\{\vert r\vert, \vert s\vert\}]$ and nothing else of the support of $\nu_n$.\\
\end{itemize}

\end{Prop}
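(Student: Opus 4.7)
The proof is a residue/contour-deformation calculation starting from Lemma~\ref{RedCancel}.

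First, I rewrite the integrand in exponential form. By the definitions (\ref{FuncNonAsymp1})--(\ref{FuncNonAsymp2}), $E_n(nw) = e^{nf_n(w)}$ and $Q_n(nw;\Delta x_j^{(n)}) = e^{h_{n,j}(w)}$, so
\begin{equation*}
\frac{Q_n(nw,\Delta x_1^{(n)})E_n(nw)}{Q_n(nz,\Delta x_2^{(n)})E_n(nz)} = \exp\bigl(n(f_n(w)-f_n(z)) + h_{n,1}(w) - h_{n,2}(z)\bigr),
\end{equation*}
which converts the integrand of Lemma~\ref{RedCancel} into the one appearing in $\tilde K_{\mathcal{R}}^{(n)}$.

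Next I would deform the contours from Lemma~\ref{RedCancel} into the split contours stated in the proposition. Setting $M := \max\{|r|,|s|\}$, the cycle $\G_n^1 + \G_n^2$, viewed with the stated CCW/CW orientations, is homologous to a CCW loop around the right block $[t_1^{(n)}, b_n]$, i.e.\ to the original $\G_n$. Similarly $\g_n^1 + \g_n^2$ (both CW) is a CW loop around the union of the right-block poles and the cusp window of radius $M/n$ around $t_c^{(n)}$; the orientation reversal versus the original $\g_n$ absorbs the overall minus sign in Lemma~\ref{RedCancel}. The choice $M=\max\{|r|,|s|\}$ is exactly what is needed so that $\g_n^2$ and $\G_n^2$ enclose every cusp-region pole of $q_n(\cdot;r)$ and $q_n(\cdot;s)$, while $\g_n^1, \G_n^1$ capture the poles from $E_n$ and $Q_n$ located near the right block.

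In the case $x_1 < x_2$, the new cycles are homologous to the originals and the two sign flips (from the orientation reversal of $\g_n$, and from the donut structure of the $z$-cycle) combine to turn the prefactor $-1$ of Lemma~\ref{RedCancel} into the $+1$ of $\tilde K_{\mathcal{R}}^{(n)}$; Cauchy's theorem then gives $K_{\mathcal{R}}^{(n)} = \tilde K_{\mathcal{R}}^{(n)}$. In the case $x_1 \geq x_2$, the original $z$-contour $\tilde\G_n$ instead encloses the \emph{left} block, so deforming it to $\G_n^1 + \G_n^2$ is not homologous-free: after matching the residues of the $z$-integrand at the $\beta_i^{(n)}$'s, there is an additional contribution from the movable pole $w = z$ of the inner $w$-integrand for $z$ in the cusp window. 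A direct calculation gives
\begin{equation*}
\mathrm{Res}_{w=z}\Bigl[\tfrac{1}{(w-z)(z-n^{-1}x_2)}\tfrac{q_n(nw;r)}{q_n(nz;s)}\, e^{n(f_n(w)-f_n(z)) + h_{n,1}(w)-h_{n,2}(z)}\Bigr] = \tfrac{1}{z-n^{-1}x_2}\tfrac{q_n(nz;r)}{q_n(nz;s)}\, e^{h_{n,1}(z)-h_{n,2}(z)},
\end{equation*}
and integrating this expression against the cycle $\G_n^2$ produces precisely the term $B_n((x_1,y_1),(x_2,y_2))$ with the sign $-1_{x_1 \geq x_2}$ appearing in (\ref{ChangedContourKernel1}).

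The principal difficulty is orientation bookkeeping: with mixed CCW/CW components across both the $z$- and $w$-contours, one has to carry every sign consistently through each residue computation to arrive at the signs stated. A secondary check is that the window of size $M/n$ around $t_c^{(n)}$ is wide enough to contain all cusp-region poles of $q_n$ (located at $t_c^{(n)} + k/n$ with $|k|\le M$) yet disjoint from the right-block window $[t_1^{(n)}, b_n]$; this is automatic for large $n$ since $r,s$ are fixed integers while $t_1^{(n)} - t_c^{(n)} \to t_1 - t_c > 0$.
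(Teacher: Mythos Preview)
Your sketch identifies the correct starting point (Lemma~\ref{RedCancel}) and the correct source of $B_n$ (the $w=z$ residue), but it is missing the two non-trivial steps that the paper's argument actually rests on.

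\textbf{Case $x_1<x_2$.} You assert that $\gamma_n^1+\gamma_n^2$ is ``homologous to the original'' $\gamma_n$. It is not: the original $\gamma_n$ from Lemma~\ref{RedCancel} \emph{contains} $\Gamma_n$, whereas the final $\gamma_n^1$ sits \emph{inside} $\Gamma_n^1$. In passing from one to the other you sweep the $w$-contour across the $z$-contour and pick up the $w=z$ residue for every $z\in\Gamma_n$. The reason the answer is nevertheless unchanged is an algebraic fact, not a topological one: the $w=z$ residue equals $\frac{q_n(nz;r)}{q_n(nz;s)}\prod_{k=x_1+1}^{x_2-1}(nz-k)$, which for $x_1<x_2$ is a polynomial times $q_n(nz;r)/q_n(nz;s)$ and therefore has no poles inside $\Gamma_n$. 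This is precisely the computation the paper does to obtain $J_{\Gamma_n,\gamma_n^0}=-J_{\Gamma_n,\gamma_n^1}$, and your sketch omits it.

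\textbf{Case $x_1\ge x_2$.} Here the original $z$-contour $\tilde\Gamma_n$ encircles the \emph{left} block $[a_n,t_2^{(n)}]$, and you need to end up with $\Gamma_n^1$ encircling the \emph{right} block. These two loops are separated by genuine poles of the $z$-integrand (the $\beta_i^{(n)}$ in both blocks), so no bounded homotopy connects them. The paper's device is to push $\tilde\Gamma_n^1$ through infinity, using that $e^{-ng_{n,2}(z)}$ decays like $|z|^{-n\nu_{n,2}(\R)}$ with $\nu_{n,2}(\R)\to\nu(\R)>0$; this converts the left-block loop into (minus) the right-block loop. Your sentence ``after matching the residues of the $z$-integrand at the $\beta_i^{(n)}$'s'' does not explain how this matching is done, and without the decay-at-infinity step the deformation is simply not valid.

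In short: your residue computation for $B_n$ is correct, but the orientation bookkeeping you flag as the ``principal difficulty'' is not the issue --- the real work is (i) showing the $w=z$ residue vanishes on $\Gamma_n$ when $x_1<x_2$, and (ii) justifying the left-to-right flip of the $z$-contour via decay at infinity when $x_1\ge x_2$.
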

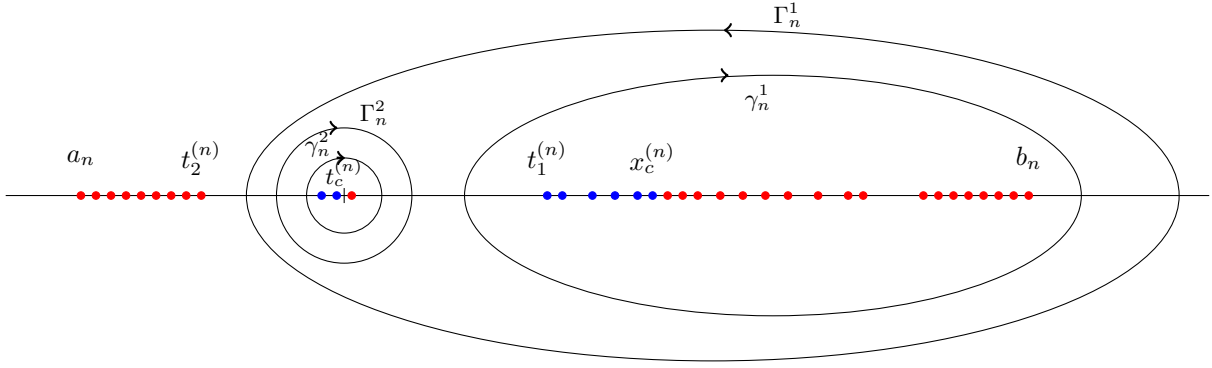
\begin{figure}[H]
\centering
\begin{tikzpicture}

\draw (0.4,0) ellipse (6.2cm and 2.2cm);
\draw (1.2,0) ellipse (4.1cm and 1.6cm);
\draw(-9,0) --++(16,0);
\draw (-4.5,0.0) circle (0.9cm);
\draw (-4.5,0.0) circle (0.5cm);

\draw[arrows=->,line width=1pt](0.56,2.2)--(0.55,2.2);
\draw (1.4,2.4) node {\small$\G_n^1$};
\draw[arrows=->,line width=1pt](0.6,1.6)--(0.61,1.6);
\draw (1.0,1.3) node {\small$\g_n^1$};
\draw[arrows=->,line width=1pt](-4.6,0.9)--(-4.59,0.9);
\draw (-4.85,0.68) node {\small$\g_n^2$};
\draw[arrows=<-,line width=1pt](-4.49,0.5)--(-4.6,0.5);
\draw (-4.1,1.1) node {\small$\G_n^2$};

\draw (-8,0.5) node {$a_n$};
\filldraw[fill=red, draw=red] (-8,0) circle (0.05 cm);
\filldraw[fill=red, draw=red] (-7.8,0) circle (0.05 cm);
\filldraw[fill=red, draw=red] (-7.6,0) circle (0.05 cm);
\filldraw[fill=red, draw=red] (-7.4,0) circle (0.05 cm);
\filldraw[fill=red, draw=red] (-7.2,0) circle (0.05 cm);
\filldraw[fill=red, draw=red] (-7.0,0) circle (0.05 cm);
\filldraw[fill=red, draw=red] (-6.8,0) circle (0.05 cm);
\filldraw[fill=red, draw=red] (-6.6,0) circle (0.05 cm);
\filldraw[fill=red, draw=red] (-6.4,0) circle (0.05 cm);
\draw (-6.4,0.5) node {$t_2^{(n)}$};
\draw (-4.5,0.3) node {\small$t_c^{(n)}$};
\draw(-4.5,-.1) --++(0,.2);
\filldraw[fill=red, draw=red] (-4.4,0) circle (0.05 cm);
\filldraw[fill=blue, draw=blue] (-4.6,0) circle (0.05 cm);
\filldraw[fill=blue, draw=blue] (-4.8,0) circle (0.05 cm);
\draw (-1.8,0.5) node {$t_1^{(n)}$};
\filldraw[fill=blue, draw=blue] (-1.8,0) circle (0.05 cm);
\filldraw[fill=blue, draw=blue] (-1.6,0) circle (0.05 cm);
\filldraw[fill=blue, draw=blue] (-1.2,0) circle (0.05 cm);
\filldraw[fill=blue, draw=blue] (-0.9,0) circle (0.05 cm);
\filldraw[fill=blue, draw=blue] (-0.6,0) circle (0.05 cm);
\filldraw[fill=blue, draw=blue] (-0.4,0) circle (0.05 cm);
\draw (-0.4,0.5) node {$x_c^{(n)}$};
\filldraw[fill=red, draw=red] (-0.2,0) circle (0.05 cm);
\filldraw[fill=red, draw=red] (0,0) circle (0.05 cm);
\filldraw[fill=red, draw=red] (0.2,0) circle (0.05 cm);
\filldraw[fill=red, draw=red] (0.5,0) circle (0.05 cm);
\filldraw[fill=red, draw=red] (0.8,0) circle (0.05 cm);
\filldraw[fill=red, draw=red] (1.1,0) circle (0.05 cm);
\filldraw[fill=red, draw=red] (1.4,0) circle (0.05 cm);
\filldraw[fill=red, draw=red] (1.8,0) circle (0.05 cm);
\filldraw[fill=red, draw=red] (2.2,0) circle (0.05 cm);
\filldraw[fill=red, draw=red] (2.4,0) circle (0.05 cm);

\filldraw[fill=red, draw=red] (3.2,0) circle (0.05 cm);
\filldraw[fill=red, draw=red] (3.4,0) circle (0.05 cm);
\filldraw[fill=red, draw=red] (3.6,0) circle (0.05 cm);
\filldraw[fill=red, draw=red] (3.8,0) circle (0.05 cm);
\filldraw[fill=red, draw=red] (4,0) circle (0.05 cm);
\filldraw[fill=red, draw=red] (4.2,0) circle (0.05 cm);
\filldraw[fill=red, draw=red] (4.4,0) circle (0.05 cm);
\filldraw[fill=red, draw=red] (4.6,0) circle (0.05 cm);
\draw (4.6,0.5) node {$b_n$};
\end{tikzpicture}
\caption{\label{figContour1} Integration contours}
\end{figure}

\begin{proof}
The starting point is lemma \ref{RedCancel}. The proof will consist of a series of deformations of the contours. It will be convenient to introduce the notation
\begin{align*}
J_{\G,\g}^{(n)}:=\frac{1}{(2\pi i)^2}\oint_{\G}dz \oint_{\g}dw\frac{1}{z-n^{-1}x_2}\frac{q_n(nw;r)}{q_n(nz;s)}\frac{Q_n(nw;\Delta x_1^{(n)})}{Q_n(nz;\Delta x_2^{(n)})}\frac{E_n(nw)E_n(nz)^{-1}}{w-z}
\end{align*}
for some contours $\G,\g$.
First assume that $x_1<x_2$ and consider the deformation of $\gamma_n$ in figure \ref{figCancelledContour1} into two contours $\g_n^2$ and $\g_n^0$ as shown in figure \ref{figDeform1}. Next consider adding the contour $\gamma_n^1$. This is shown in figure \ref{figDeform2}. Consider the contour $\g_n^0+\g_n^1$. Then the integrand of $K_{\mathcal{R}}^{(n)}$ in (\ref{lemCancelKernel}) has one residue inside the domain bounded by $\g_n^1$ and $\g_n^0$. Computing this residue gives
\begin{align*}
&\frac{1}{(2\pi i)^2}\oint_{\Gamma_n}dz \oint_{\g_n^0\cup \g_n^1}dw\frac{1}{z-n^{-1}x_2}\frac{q_n(nw;r)}{q_n(nz;s)}\frac{Q_n(nw;\Delta x_1^{(n)})}{Q_n(nz;\Delta x_2^{(n)})}\frac{E_n(nw)E_n(nz)^{-1}}{w-z}\\
&=\frac{1}{2\pi i}\oint_{\Gamma_n}\frac{\prod_{k=x_2+y_2-n}^{x_2-1}(nz-k)}{\prod_{\substack{k=x_1+y_1-n}}^{x_1}(nz-k)}dz
=\frac{1}{2\pi i}\oint_{\Gamma_n}\frac{\prod_{k=nt_c^{(n)}+s}^{x_2-1}(nz-k)}{\prod_{\substack{k=nt_c^{(n)}+r}}^{x_1}(nz-k)}dz\\
&=\frac{1}{2\pi i}\oint_{\Gamma_n}\frac{q_n(nz;r)}{q_n(nz;s)}\prod_{k=x_1+1}^{x_2-1}(nz-k)dz=0,
\end{align*}
since $\G_n$ contains no poles. Hence
\begin{align}
\label{Res}
J_{\G_n\g_n^0}^{(n)}=-J_{\G_n\g_n^1}^{(n)}
\end{align}
Next, we deform the contour $\G_n$ into the contours $\G_n^1$ and $\G_n^2$ according to Figure \ref{figDeform3}.  
This gives us
\begin{align*}
J_{\G_n\g_n}&=J_{\G_n\g_n^0}+J_{\G_n\g_n^2}\\
&=-J_{\G_n\g_n^1}+J_{\G_n\g_n^2}\quad \text{by (\ref{Res})}\\
&=-(J_{\G_n^1\g_n^1}+J_{\G_n^2\g_n^1})+(J_{\G_n^1,-\g_n^2}+J_{\G_n^2,-\g_n^2}).
\end{align*}
We now instead assume that $x_1\geq x_2$. We then deform deform the contour $\tilde{\G} _n$ into the contours $\tilde{\G} _n^1$ and $\tilde{\G} _n^2$ according to figure \ref{figDeform4}. Finally, we deform the contour $\tilde{\g}_n$ into the contours $\tilde{\g}_n^0$, $\tilde{\g}_n^2$ and $\tilde{\g}_n^1$, according to figure \ref{figDeform5}. However, using that the only residue contained in $\tilde{\g}_n^0$ is the pole $(w-z)^{-1}$, we get
\begin{align*}
J_{\tilde{\G}_n^1\tilde{\g}_n^0}^{(n)}=\frac{1}{2\pi i}\oint_{\tilde{\G}_n^1}\frac{q_n(nz;r)}{q_n(nz;s)}\prod_{k=x_1+1}^{x_2-1}(nz-k)dz=0,
\end{align*}
since the contour $\tilde{\G}_n^1$ contains no poles in $z$. Similarly, we get that 
\begin{align*}
J_{\tilde{\G}_n^2\tilde{\g}_n^0}^{(n)}=0
\end{align*}
since $\tilde{\g}_n^0$ contains no poles in $w$. This gives 
\begin{align*}
J_{\tilde{\G}_n\tilde{\g}_n}^{(n)}&=J_{\tilde{\G}_n^1\tilde{\g}_n^2}^{(n)}+J_{\tilde{\G}_n^1\tilde{\g}_n^1}^{(n)}+J_{\tilde{\G}_n^2\tilde{\g}_n^2}^{(n)}+J_{\tilde{\G}_n^2\tilde{\g}_n^1}^{(n)},
\end{align*}
where the contours are shown in figure \ref{figDeform6}. We now deform the contour $\tilde{\G}_n^1$ into $\tilde{\G}_n^3+C_R$
according to figure \ref{figDeform7}.  Clearly the contribution along $\ell$ vanish and to prove that the contribution from $C_R$ vanishes as $R\to\infty$ we observe that that $g_{n,2}(z)=\nu_{n,2}(\R)\log\vert z\vert+ O(\vert z\vert^{-1})$ and $\lim_{n\to\infty}\nu_{n,2}(\R)=\nu(\R)>0$. From this it is not difficult to see that
$\lim_{R\to \infty}\vert J_{C_R\tilde{\g}_n^i}^{(n)}\vert=0$.
 
We now have the contours as shown in figure \ref{figDeform8}. Using that  $\tilde{\G}_n^3=-\G_n^1$, $\tilde{\g}_n^1=-\g_n^1$and  $\tilde{\G}_n^2=-\G_n^2$, where the minus sign means orientation reversion, we get
\begin{align*}
J_{\tilde{\G}_n\tilde{\g}_n}^{(n)}&=J_{\tilde{\G}_n^3\tilde{\g}_n^1}^{(n)}+J_{\tilde{\G}_n^3\tilde{\g}_n^2}^{(n)}+J_{\tilde{\G}_n^2\tilde{\g}_n^1}^{(n)}+J_{\tilde{\G}_n^2\tilde{\g}_n^2}^{(n)}
\\&=J_{\G_n^1\g_n^1}^{(n)}-J_{\G_n^1\tilde{\g}_n^2}^{(n)}+J_{\G_n^2\g_n^1}^{(n)}-J_{\G_n^2\tilde{\g}_n^2}^{(n)}.
\end{align*}
 We see that $\g_n^2$ is inside $\G_n^2$ which is inside $\tilde{\g}_n^2$. Using the residue theorem we find
\begin{align*}
J_{\G_n^2\tilde{\g}_n^2}^{(n)}-J_{\G_n^2,-\g_n^2}^{(n)}&=\frac{1}{(2\pi i)^2}\oint_{\Gamma_n^2}dz \frac{1}{z-n^{-1}x_2}\frac{q_n(nz;r)}{q_n(nz;s)}\frac{Q_n(nz;\Delta x_1^n)}{Q_n(nz;\Delta x_2^{(n)})}\\
&=\frac{1}{(2\pi i)^2}\oint_{\Gamma_n^2}dz \frac{1}{z-n^{-1}x_2}\frac{q_n(nz;r)}{q_n(nz;s)}e^{n(h_{n,1}(w)-h_{n,2}(z))}\\
&:=B_n((x_1,y_1),(x_2,y_2)).
\end{align*}
Furthermore, $J_{\G_n^1\tilde{\g}_n^2}^{(n)}=J_{\G_n^1,-\g_n^2}^{(n)}$.
Together, this gives
\begin{align*}
J_{\tilde{\G}_n\tilde{\g}_n}^{(n)}&=J_{\G_n^1\g_n^1}^{(n)}-J_{\G_n^1,-\g_n^2}^{(n)}+J_{\G_n^2\g_n^1}^{(n)}-J_{\G_n^2,-\g_n^2}^{(n)}-B_n.
\end{align*}
Hence, by lemma \ref{RedCancel}, we get
\begin{align*}
&K_{\mathcal{R}}^{(n)}=-1_{x_1<x_2}J_{\G_n\g_n}^{(n)}+1_{x_1\geq x_2}J_{\tilde{\G}_n\tilde{\g}_n}^{(n)}\\
&=-1_{x_1<x_2}(-J_{\G_n^1\g_n^1}^{(n)}-J_{\G_n^2\g_n^1}^{(n)}+J_{\G_n^1,-\g_n^2}^{(n)}+J_{\G_n^2,-\g_n^2}^{(n)})+1_{x_1\geq x_2}(J_{\G_n^1\g_n^1}^{(n)}-J_{\G_n^1,-\g_n^2}^{(n)}+J_{\G_n^2\g_n^1}^{(n)}-J_{\G_n^2,-\g_n^2}^{(n)}-B_n)\\
&=-1_{x_1\geq x_2}B_n+J_{\G_n^1\g_n^1}^{(n)}+J_{\G_n^2\g_n^1}^{(n)}-J_{\G_n^1,-\g_n^2}^{(n)}-J_{\G_n^2,-\g_n^2}^{(n)}\\
&=-1_{x_1\geq x_2}B_n+J_{\G_n^1\g_n^1}^{(n)}+J_{\G_n^2\g_n^1}^{(n)}+J_{\G_n^1\g_n^2}^{(n)}+J_{\G_n^2\g_n^2}^{(n)}
\end{align*}
This gives us finally,
\begin{align*}
&K_{\mathcal{R}}^{(n)}((x_1,y_1),(x_2,y_2))=-1_{x_1\geq x_2}B_n((x_1,y_1),(x_2,y_2))+J_{(\G_n^1+\G_n^2)(\g_n^1+\g_n^2)}^{(n)}\\
&=-1_{x_1\geq x_2}\frac{1}{2\pi i}\oint_{\Gamma_n^2}dz \frac{1}{z-n^{-1}x_2}\frac{q_n(nz;r)}{q_n(nz;s)}e^{n(h_{n,1}(w)-h_{n,2}(z))}\\&+\frac{1}{(2\pi i)^2}\oint_{\G_n^1+\G_n^2}dz \oint_{\g_n^1+\g_n^2}dw\frac{1}{z-n^{-1}x_2}\frac{q_n(nw;r)}{q_n(nz;s)}\frac{e^{n(f_n(w)-nf_n(z))+n(h_{n,1}(w)-h_{n,2}(z))}}{w-z}
\end{align*}
by the definitions of $f_n$, $h_{n,1}$ and $h_{n,2}$.
\end{proof}

\begin{figure}[H]
\centering
\begin{tikzpicture}

\draw (1.2,0) ellipse (4.8cm and 2.2cm);
\draw (1.0,0) ellipse (4.2cm and 1.8cm);
\draw(-9,0) --++(16,0);
\draw (-4.5,0.0) circle (0.7cm);

\draw[arrows=->,line width=1pt](0.56,2.2)--(0.55,2.2);
\draw (1.4,2.4) node {$\g_n^0$};
\draw[arrows=->,line width=1pt](0.61,1.8)--(0.6,1.8);
\draw (1.0,1.3) node {$\G_n$};
\draw[arrows=->,line width=1pt](-4.59,0.7)--(-4.6,0.7);
\draw (-4.7,1.0) node {$-\g_n^2$};

\draw (-8,0.5) node {$a_n$};
\filldraw[fill=red, draw=red] (-8,0) circle (0.05 cm);
\filldraw[fill=red, draw=red] (-7.8,0) circle (0.05 cm);
\filldraw[fill=red, draw=red] (-7.6,0) circle (0.05 cm);
\filldraw[fill=red, draw=red] (-7.4,0) circle (0.05 cm);
\filldraw[fill=red, draw=red] (-7.2,0) circle (0.05 cm);
\filldraw[fill=red, draw=red] (-7.0,0) circle (0.05 cm);
\filldraw[fill=red, draw=red] (-6.8,0) circle (0.05 cm);
\filldraw[fill=red, draw=red] (-6.6,0) circle (0.05 cm);
\filldraw[fill=red, draw=red] (-6.4,0) circle (0.05 cm);
\draw (-6.4,0.5) node {$t_2^{(n)}$};
\draw (-4.5,0.3) node {$t_c^{(n)}$};
\draw(-4.5,-.1) --++(0,.2);
\filldraw[fill=red, draw=red] (-4.4,0) circle (0.05 cm);
\filldraw[fill=blue, draw=blue] (-4.6,0) circle (0.05 cm);
\filldraw[fill=blue, draw=blue] (-4.8,0) circle (0.05 cm);
\draw (-1.8,0.5) node {$t_1^{(n)}$};
\filldraw[fill=blue, draw=blue] (-1.8,0) circle (0.05 cm);
\filldraw[fill=blue, draw=blue] (-1.6,0) circle (0.05 cm);
\filldraw[fill=blue, draw=blue] (-1.2,0) circle (0.05 cm);
\filldraw[fill=blue, draw=blue] (-0.9,0) circle (0.05 cm);
\filldraw[fill=blue, draw=blue] (-0.6,0) circle (0.05 cm);
\filldraw[fill=blue, draw=blue] (-0.4,0) circle (0.05 cm);
\draw (-0.4,0.5) node {$x_c^{(n)}$};
\filldraw[fill=red, draw=red] (-0.2,0) circle (0.05 cm);
\filldraw[fill=red, draw=red] (0,0) circle (0.05 cm);
\filldraw[fill=red, draw=red] (0.2,0) circle (0.05 cm);
\filldraw[fill=red, draw=red] (0.5,0) circle (0.05 cm);
\filldraw[fill=red, draw=red] (0.8,0) circle (0.05 cm);
\filldraw[fill=red, draw=red] (1.1,0) circle (0.05 cm);
\filldraw[fill=red, draw=red] (1.4,0) circle (0.05 cm);
\filldraw[fill=red, draw=red] (1.8,0) circle (0.05 cm);
\filldraw[fill=red, draw=red] (2.2,0) circle (0.05 cm);
\filldraw[fill=red, draw=red] (2.4,0) circle (0.05 cm);

\filldraw[fill=red, draw=red] (3.2,0) circle (0.05 cm);
\filldraw[fill=red, draw=red] (3.4,0) circle (0.05 cm);
\filldraw[fill=red, draw=red] (3.6,0) circle (0.05 cm);
\filldraw[fill=red, draw=red] (3.8,0) circle (0.05 cm);
\filldraw[fill=red, draw=red] (4,0) circle (0.05 cm);
\filldraw[fill=red, draw=red] (4.2,0) circle (0.05 cm);
\filldraw[fill=red, draw=red] (4.4,0) circle (0.05 cm);
\filldraw[fill=red, draw=red] (4.6,0) circle (0.05 cm);
\draw (4.6,0.5) node {$b_n$};

\end{tikzpicture}
\caption{\protect \label{figDeform1} Integration contours}
\end{figure}

\begin{figure}[H]

\centering
\begin{tikzpicture}

\draw (1.2,0) ellipse (5.4cm and 2.6cm);
\draw (1.2,0) ellipse (4.8cm and 2.2cm);
\draw (1.2,0) ellipse (4.4cm and 1.6cm);
\draw(-9,0) --++(16,0);

\draw[arrows=->,line width=1pt](0.56,2.6)--(0.55,2.6);
\draw (1.4,2.8) node {$\g_n^0$};
\draw[arrows=->,line width=1pt](0.61,2.2)--(0.6,2.2);
\draw (1.0,2.0) node {$\G_n$};
\draw[arrows=->,line width=1pt](0.61,1.6)--(0.62,1.6);
\draw (1.0,1.2) node {$\g_n^{1}$};

\draw (-1.8,0.5) node {$t_1^{(n)}$};
\filldraw[fill=blue, draw=blue] (-1.8,0) circle (0.05 cm);
\filldraw[fill=blue, draw=blue] (-1.6,0) circle (0.05 cm);
\filldraw[fill=blue, draw=blue] (-1.2,0) circle (0.05 cm);
\filldraw[fill=blue, draw=blue] (-0.9,0) circle (0.05 cm);
\filldraw[fill=blue, draw=blue] (-0.6,0) circle (0.05 cm);
\filldraw[fill=blue, draw=blue] (-0.4,0) circle (0.05 cm);
\draw (-0.4,0.5) node {$x_c^{(n)}$};
\filldraw[fill=red, draw=red] (-0.2,0) circle (0.05 cm);
\filldraw[fill=red, draw=red] (0,0) circle (0.05 cm);
\filldraw[fill=red, draw=red] (0.2,0) circle (0.05 cm);
\filldraw[fill=red, draw=red] (0.5,0) circle (0.05 cm);
\filldraw[fill=red, draw=red] (0.8,0) circle (0.05 cm);
\filldraw[fill=red, draw=red] (1.1,0) circle (0.05 cm);
\filldraw[fill=red, draw=red] (1.4,0) circle (0.05 cm);
\filldraw[fill=red, draw=red] (1.8,0) circle (0.05 cm);
\filldraw[fill=red, draw=red] (2.2,0) circle (0.05 cm);
\filldraw[fill=red, draw=red] (2.4,0) circle (0.05 cm);

\filldraw[fill=red, draw=red] (3.2,0) circle (0.05 cm);
\filldraw[fill=red, draw=red] (3.4,0) circle (0.05 cm);
\filldraw[fill=red, draw=red] (3.6,0) circle (0.05 cm);
\filldraw[fill=red, draw=red] (3.8,0) circle (0.05 cm);
\filldraw[fill=red, draw=red] (4,0) circle (0.05 cm);
\filldraw[fill=red, draw=red] (4.2,0) circle (0.05 cm);
\filldraw[fill=red, draw=red] (4.4,0) circle (0.05 cm);
\filldraw[fill=red, draw=red] (4.6,0) circle (0.05 cm);
\draw (4.6,0.5) node {$b_n$};

\end{tikzpicture}
\caption{\label{figDeform2} Integration contours}
\end{figure}

\begin{figure}[H]

\centering
\begin{tikzpicture}

\draw (0.4,0) ellipse (6.2cm and 2.2cm);
\draw (1.2,0) ellipse (4.1cm and 1.6cm);
\draw(-9,0) --++(16,0);
\draw (-4.5,0.0) circle (0.9cm);
\draw (-4.5,0.0) circle (0.5cm);

\draw[arrows=->,line width=1pt](0.56,2.2)--(0.55,2.2);
\draw (1.4,2.4) node {\small$\G_n^1$};
\draw[arrows=->,line width=1pt](0.6,1.6)--(0.61,1.6);
\draw (1.0,1.3) node {\small$\g_n^1$};
\draw[arrows=<-,line width=1pt](-4.59,0.9)--(-4.6,0.9);
\draw (-4.85,0.68) node {\small$\g_n^2$};
\draw[arrows=->,line width=1pt](-4.59,0.5)--(-4.6,0.5);
\draw (-4.1,1.1) node {\small$\G_n^2$};

\draw (-8,0.5) node {$a_n$};
\filldraw[fill=red, draw=red] (-8,0) circle (0.05 cm);
\filldraw[fill=red, draw=red] (-7.8,0) circle (0.05 cm);
\filldraw[fill=red, draw=red] (-7.6,0) circle (0.05 cm);
\filldraw[fill=red, draw=red] (-7.4,0) circle (0.05 cm);
\filldraw[fill=red, draw=red] (-7.2,0) circle (0.05 cm);
\filldraw[fill=red, draw=red] (-7.0,0) circle (0.05 cm);
\filldraw[fill=red, draw=red] (-6.8,0) circle (0.05 cm);
\filldraw[fill=red, draw=red] (-6.6,0) circle (0.05 cm);
\filldraw[fill=red, draw=red] (-6.4,0) circle (0.05 cm);
\draw (-6.4,0.5) node {$t_2^{(n)}$};
\draw (-4.5,0.3) node {\small$t_c^{(n)}$};
\draw(-4.5,-.1) --++(0,.2);
\filldraw[fill=red, draw=red] (-4.4,0) circle (0.05 cm);
\filldraw[fill=blue, draw=blue] (-4.6,0) circle (0.05 cm);
\filldraw[fill=blue, draw=blue] (-4.8,0) circle (0.05 cm);
\draw (-1.8,0.5) node {$t_1^{(n)}$};
\filldraw[fill=blue, draw=blue] (-1.8,0) circle (0.05 cm);
\filldraw[fill=blue, draw=blue] (-1.6,0) circle (0.05 cm);
\filldraw[fill=blue, draw=blue] (-1.2,0) circle (0.05 cm);
\filldraw[fill=blue, draw=blue] (-0.9,0) circle (0.05 cm);
\filldraw[fill=blue, draw=blue] (-0.6,0) circle (0.05 cm);
\filldraw[fill=blue, draw=blue] (-0.4,0) circle (0.05 cm);
\draw (-0.4,0.5) node {$x_c^{(n)}$};
\filldraw[fill=red, draw=red] (-0.2,0) circle (0.05 cm);
\filldraw[fill=red, draw=red] (0,0) circle (0.05 cm);
\filldraw[fill=red, draw=red] (0.2,0) circle (0.05 cm);
\filldraw[fill=red, draw=red] (0.5,0) circle (0.05 cm);
\filldraw[fill=red, draw=red] (0.8,0) circle (0.05 cm);
\filldraw[fill=red, draw=red] (1.1,0) circle (0.05 cm);
\filldraw[fill=red, draw=red] (1.4,0) circle (0.05 cm);
\filldraw[fill=red, draw=red] (1.8,0) circle (0.05 cm);
\filldraw[fill=red, draw=red] (2.2,0) circle (0.05 cm);
\filldraw[fill=red, draw=red] (2.4,0) circle (0.05 cm);

\filldraw[fill=red, draw=red] (3.2,0) circle (0.05 cm);
\filldraw[fill=red, draw=red] (3.4,0) circle (0.05 cm);
\filldraw[fill=red, draw=red] (3.6,0) circle (0.05 cm);
\filldraw[fill=red, draw=red] (3.8,0) circle (0.05 cm);
\filldraw[fill=red, draw=red] (4,0) circle (0.05 cm);
\filldraw[fill=red, draw=red] (4.2,0) circle (0.05 cm);
\filldraw[fill=red, draw=red] (4.4,0) circle (0.05 cm);
\filldraw[fill=red, draw=red] (4.6,0) circle (0.05 cm);
\draw (4.6,0.5) node {$b_n$};

\end{tikzpicture}
\caption{\label{figDeform3} Integration contours}
\end{figure}

\begin{figure}[H]

\centering
\begin{tikzpicture}

\draw (-2,0) ellipse (7.6cm and 2.6cm);
\draw (-7.1,0) ellipse (1.2cm and 1.2cm);
\draw(-10,0) --++(16,0);
\draw (-4.5,0) circle (0.8cm);

\draw[arrows=->,line width=1pt](-2,2.6)--(-2.02,2.6);
\draw (2,2.5) node {$\tilde{\g}_n$};
\draw (-7.1,0.8) node {\small$\tilde{\G}_n^1$};
\draw[arrows=->,line width=1pt](-7.09,1.2)--(-7.1,1.2);
\draw[arrows=->,line width=1pt](-4.49,0.8)--(-4.5,0.8);
\draw (-4.4,0.55) node {\small$\tilde{\G}_n^2$};
\draw (-8,0.3) node {$a_n$};

\filldraw[fill=red, draw=red] (-8,0) circle (0.05 cm);
\filldraw[fill=red, draw=red] (-7.8,0) circle (0.05 cm);
\filldraw[fill=red, draw=red] (-7.6,0) circle (0.05 cm);
\filldraw[fill=red, draw=red] (-7.4,0) circle (0.05 cm);
\filldraw[fill=red, draw=red] (-7.2,0) circle (0.05 cm);
\filldraw[fill=red, draw=red] (-7.0,0) circle (0.05 cm);
\filldraw[fill=red, draw=red] (-6.8,0) circle (0.05 cm);
\filldraw[fill=red, draw=red] (-6.6,0) circle (0.05 cm);
\filldraw[fill=red, draw=red] (-6.4,0) circle (0.05 cm);
\draw (-6.4,0.5) node {$t_2^{(n)}$};
\draw (-4.5,0.3) node {\small$t_c^{(n)}$};
\draw(-4.5,-.1) --++(0,.2);
\filldraw[fill=red, draw=red] (-4.4,0) circle (0.05 cm);
\filldraw[fill=blue, draw=blue] (-4.6,0) circle (0.05 cm);
\filldraw[fill=blue, draw=blue] (-4.8,0) circle (0.05 cm);
\draw (-1.6,0.5) node {$t_1^{(n)}$};
\filldraw[fill=blue, draw=blue] (-1.8,0) circle (0.05 cm);
\filldraw[fill=blue, draw=blue] (-1.6,0) circle (0.05 cm);
\filldraw[fill=blue, draw=blue] (-1.2,0) circle (0.05 cm);
\filldraw[fill=blue, draw=blue] (-0.9,0) circle (0.05 cm);
\filldraw[fill=blue, draw=blue] (-0.6,0) circle (0.05 cm);
\filldraw[fill=blue, draw=blue] (-0.4,0) circle (0.05 cm);
\draw (-0.4,0.5) node {$\chi_c^{(n)}$};
\filldraw[fill=red, draw=red] (-0.2,0) circle (0.05 cm);
\filldraw[fill=red, draw=red] (0,0) circle (0.05 cm);
\filldraw[fill=red, draw=red] (0.2,0) circle (0.05 cm);
\filldraw[fill=red, draw=red] (0.5,0) circle (0.05 cm);
\filldraw[fill=red, draw=red] (0.8,0) circle (0.05 cm);
\filldraw[fill=red, draw=red] (1.1,0) circle (0.05 cm);
\filldraw[fill=red, draw=red] (1.4,0) circle (0.05 cm);
\filldraw[fill=red, draw=red] (1.8,0) circle (0.05 cm);
\filldraw[fill=red, draw=red] (2.2,0) circle (0.05 cm);
\filldraw[fill=red, draw=red] (2.4,0) circle (0.05 cm);

\filldraw[fill=red, draw=red] (3.2,0) circle (0.05 cm);
\filldraw[fill=red, draw=red] (3.4,0) circle (0.05 cm);
\filldraw[fill=red, draw=red] (3.6,0) circle (0.05 cm);
\filldraw[fill=red, draw=red] (3.8,0) circle (0.05 cm);
\filldraw[fill=red, draw=red] (4,0) circle (0.05 cm);
\filldraw[fill=red, draw=red] (4.2,0) circle (0.05 cm);
\filldraw[fill=red, draw=red] (4.4,0) circle (0.05 cm);
\filldraw[fill=red, draw=red] (4.6,0) circle (0.05 cm);
\draw (4.45,0.3) node {$b_n$};

\end{tikzpicture}
\caption{\protect\label{figDeform4} Integration contours}
\end{figure}

\begin{figure}[H]

\centering
\begin{tikzpicture}

\draw (1.4,0) ellipse (3.6cm and 1.6cm);
\draw (-7.1,0) ellipse (1.2cm and 1.2cm);
\draw (-7.1,0) ellipse (1.4cm and 1.4cm);
\draw(-10,0) --++(16,0);
\draw (-4.5,0) circle (0.8cm);
\draw (-4.5,0) circle (1.0cm);

\draw[arrows=<-,line width=1pt](1.39,1.6)--(1.4,1.6);
\draw (2,2) node {$\tilde{\g}_n^1$};
\draw[arrows=->,line width=1pt](-7.09,1.4)--(-7.1,1.4);
\draw (-7.1,0.8) node {\small$\tilde{\G}_n^1$};
\draw[arrows=->,line width=1pt](-7.09,1.2)--(-7.1,1.2);
\draw (-6.9,1.7) node {\small$\tilde{\g}_n^0$};
\draw[arrows=->,line width=1pt](-4.49,0.8)--(-4.5,0.8);
\draw (-4.4,0.55) node {\small$\tilde{\G}_n^2$};
\draw[arrows=->,line width=1pt](-4.49,1)--(-4.5,1);
\draw (-4.3,1.4) node {\small$\tilde{\g}_n^2$};
\draw (-8,0.3) node {$a_n$};

\filldraw[fill=red, draw=red] (-8,0) circle (0.05 cm);
\filldraw[fill=red, draw=red] (-7.8,0) circle (0.05 cm);
\filldraw[fill=red, draw=red] (-7.6,0) circle (0.05 cm);
\filldraw[fill=red, draw=red] (-7.4,0) circle (0.05 cm);
\filldraw[fill=red, draw=red] (-7.2,0) circle (0.05 cm);
\filldraw[fill=red, draw=red] (-7.0,0) circle (0.05 cm);
\filldraw[fill=red, draw=red] (-6.8,0) circle (0.05 cm);
\filldraw[fill=red, draw=red] (-6.6,0) circle (0.05 cm);
\filldraw[fill=red, draw=red] (-6.4,0) circle (0.05 cm);
\draw (-6.4,0.5) node {$t_2^{(n)}$};
\draw (-4.5,0.3) node {\small$t_c^{(n)}$};
\draw(-4.5,-.1) --++(0,.2);
\filldraw[fill=red, draw=red] (-4.4,0) circle (0.05 cm);
\filldraw[fill=blue, draw=blue] (-4.6,0) circle (0.05 cm);
\filldraw[fill=blue, draw=blue] (-4.8,0) circle (0.05 cm);
\draw (-1.6,0.5) node {$t_1^{(n)}$};
\filldraw[fill=blue, draw=blue] (-1.8,0) circle (0.05 cm);
\filldraw[fill=blue, draw=blue] (-1.6,0) circle (0.05 cm);
\filldraw[fill=blue, draw=blue] (-1.2,0) circle (0.05 cm);
\filldraw[fill=blue, draw=blue] (-0.9,0) circle (0.05 cm);
\filldraw[fill=blue, draw=blue] (-0.6,0) circle (0.05 cm);
\filldraw[fill=blue, draw=blue] (-0.4,0) circle (0.05 cm);
\draw (-0.4,0.5) node {$x_c^{(n)}$};
\filldraw[fill=red, draw=red] (-0.2,0) circle (0.05 cm);
\filldraw[fill=red, draw=red] (0,0) circle (0.05 cm);
\filldraw[fill=red, draw=red] (0.2,0) circle (0.05 cm);
\filldraw[fill=red, draw=red] (0.5,0) circle (0.05 cm);
\filldraw[fill=red, draw=red] (0.8,0) circle (0.05 cm);
\filldraw[fill=red, draw=red] (1.1,0) circle (0.05 cm);
\filldraw[fill=red, draw=red] (1.4,0) circle (0.05 cm);
\filldraw[fill=red, draw=red] (1.8,0) circle (0.05 cm);
\filldraw[fill=red, draw=red] (2.2,0) circle (0.05 cm);
\filldraw[fill=red, draw=red] (2.4,0) circle (0.05 cm);

\filldraw[fill=red, draw=red] (3.2,0) circle (0.05 cm);
\filldraw[fill=red, draw=red] (3.4,0) circle (0.05 cm);
\filldraw[fill=red, draw=red] (3.6,0) circle (0.05 cm);
\filldraw[fill=red, draw=red] (3.8,0) circle (0.05 cm);
\filldraw[fill=red, draw=red] (4,0) circle (0.05 cm);
\filldraw[fill=red, draw=red] (4.2,0) circle (0.05 cm);
\filldraw[fill=red, draw=red] (4.4,0) circle (0.05 cm);
\filldraw[fill=red, draw=red] (4.6,0) circle (0.05 cm);
\draw (4.45,0.3) node {$b_n$};

\end{tikzpicture}
\caption{\protect\label{figDeform5} Integration contours}
\end{figure}

\begin{figure}[H]
\centering
\begin{tikzpicture}

\draw (1.4,0) ellipse (3.6cm and 1.6cm);
\draw (-7.5,0) ellipse (1.6cm and 1.4cm);
\draw(-10,0) --++(16,0);
\draw (-4.5,0) circle (0.8cm);
\draw (-4.5,0) circle (1.0cm);

\draw[arrows=<-,line width=1pt](1.39,1.6)--(1.4,1.6);
\draw (2,2) node {$\tilde{\g}_n^1$};
\draw[arrows=->,line width=1pt](-7.49,1.4)--(-7.5,1.4);
\draw (-7.5,1.8) node {\small$\tilde{\G}_n^1$};
\draw[arrows=->,line width=1pt](-4.49,0.8)--(-4.5,0.8);
\draw (-4.4,0.55) node {\small$\tilde{\G}_n^2$};
\draw[arrows=->,line width=1pt](-4.49,1)--(-4.5,1);
\draw (-4.3,1.4) node {\small$\tilde{\g}_n^2$};
\draw (-8,0.3) node {$a_n$};

\filldraw[fill=red, draw=red] (-8,0) circle (0.05 cm);
\filldraw[fill=red, draw=red] (-7.8,0) circle (0.05 cm);
\filldraw[fill=red, draw=red] (-7.6,0) circle (0.05 cm);
\filldraw[fill=red, draw=red] (-7.4,0) circle (0.05 cm);
\filldraw[fill=red, draw=red] (-7.2,0) circle (0.05 cm);
\filldraw[fill=red, draw=red] (-7.0,0) circle (0.05 cm);
\filldraw[fill=red, draw=red] (-6.8,0) circle (0.05 cm);
\filldraw[fill=red, draw=red] (-6.6,0) circle (0.05 cm);
\filldraw[fill=red, draw=red] (-6.4,0) circle (0.05 cm);
\draw (-6.4,0.5) node {$t_2^{(n)}$};
\draw (-4.5,0.3) node {\small$t_c^{(n)}$};
\draw(-4.5,-.1) --++(0,.2);
\filldraw[fill=red, draw=red] (-4.4,0) circle (0.05 cm);
\filldraw[fill=blue, draw=blue] (-4.6,0) circle (0.05 cm);
\filldraw[fill=blue, draw=blue] (-4.8,0) circle (0.05 cm);
\draw (-1.6,0.5) node {$t_1^{(n)}$};
\filldraw[fill=blue, draw=blue] (-1.8,0) circle (0.05 cm);
\filldraw[fill=blue, draw=blue] (-1.6,0) circle (0.05 cm);
\filldraw[fill=blue, draw=blue] (-1.2,0) circle (0.05 cm);
\filldraw[fill=blue, draw=blue] (-0.9,0) circle (0.05 cm);
\filldraw[fill=blue, draw=blue] (-0.6,0) circle (0.05 cm);
\filldraw[fill=blue, draw=blue] (-0.4,0) circle (0.05 cm);
\draw (-0.4,0.5) node {$x_c^{(n)}$};
\filldraw[fill=red, draw=red] (-0.2,0) circle (0.05 cm);
\filldraw[fill=red, draw=red] (0,0) circle (0.05 cm);
\filldraw[fill=red, draw=red] (0.2,0) circle (0.05 cm);
\filldraw[fill=red, draw=red] (0.5,0) circle (0.05 cm);
\filldraw[fill=red, draw=red] (0.8,0) circle (0.05 cm);
\filldraw[fill=red, draw=red] (1.1,0) circle (0.05 cm);
\filldraw[fill=red, draw=red] (1.4,0) circle (0.05 cm);
\filldraw[fill=red, draw=red] (1.8,0) circle (0.05 cm);
\filldraw[fill=red, draw=red] (2.2,0) circle (0.05 cm);
\filldraw[fill=red, draw=red] (2.4,0) circle (0.05 cm);

\filldraw[fill=red, draw=red] (3.2,0) circle (0.05 cm);
\filldraw[fill=red, draw=red] (3.4,0) circle (0.05 cm);
\filldraw[fill=red, draw=red] (3.6,0) circle (0.05 cm);
\filldraw[fill=red, draw=red] (3.8,0) circle (0.05 cm);
\filldraw[fill=red, draw=red] (4,0) circle (0.05 cm);
\filldraw[fill=red, draw=red] (4.2,0) circle (0.05 cm);
\filldraw[fill=red, draw=red] (4.4,0) circle (0.05 cm);
\filldraw[fill=red, draw=red] (4.6,0) circle (0.05 cm);
\draw (4.45,0.3) node {$b_n$};

\end{tikzpicture}

\caption{\label{figDeform6} Integration contours}
\end{figure}

\begin{figure}[H]
\centering
\begin{tikzpicture}

\draw (-8,0) --(8,0);
\draw[thick] (0,0) circle (4cm);
\draw[thick,->] (0,4) -- (-0.04,4);
\draw[thick] (1,0) ellipse (2cm and 1cm);
\draw[thick,<-] (1,1) -- (0.96,1);
\draw[thick] (3,0) -- (4,0);
\draw[->] (3.25,0.1) -- (3.75,0.1);
\draw[<-] (3.25,-0.1) -- (3.75,-0.1);
\draw (3.5,0.3) node {$\ell$};
\draw (1,1.5) node {$\tilde{\G}_n^3$};
\draw (0.7,3.7) node {$C_R$};
\filldraw (-1.5,0) circle (0.04cm);
\filldraw (-0.5,0) circle (0.04cm);
\filldraw (2.5,0) circle (0.04cm);
\filldraw (-3,0) circle (0.04cm);

\draw (-1.5,0.4) node {\small$t_2^{(n)}$};
\draw (-0.5,0.4) node {\small$t_c^{(n)}$};
\draw (2.5,0.4) node {\small$b_n$};
\draw (-3,0.4) node {\small$a_n$};

\end{tikzpicture}

\caption{\label{figDeform7} Integration contours}
\end{figure}

\begin{figure}[H]

\centering
\begin{tikzpicture}

\draw (0.4,0) ellipse (6.2cm and 2.2cm);
\draw (1.2,0) ellipse (4.1cm and 1.6cm);
\draw(-9,0) --++(16,0);
\draw (-4.5,0.0) circle (0.9cm);
\draw (-4.5,0.0) circle (0.5cm);

\draw[arrows=<-,line width=1pt](0.56,2.2)--(0.55,2.2);
\draw (1.4,2.4) node {\small$\tilde{\G}_n^3$};
\draw[arrows=<-,line width=1pt](0.6,1.6)--(0.61,1.6);
\draw (1.0,1.3) node {\small$\tilde{\g}_n^1$};
\draw[arrows=->,line width=1pt](-4.59,0.9)--(-4.6,0.9);
\draw (-4.85,0.68) node {\small$\tilde{\G}_n^2$};
\draw[arrows=->,line width=1pt](-4.59,0.5)--(-4.6,0.5);
\draw (-4.1,1.1) node {\small$\tilde{\g}_n^2$};

\draw (-8,0.5) node {$a_n$};
\filldraw[fill=red, draw=red] (-8,0) circle (0.05 cm);
\filldraw[fill=red, draw=red] (-7.8,0) circle (0.05 cm);
\filldraw[fill=red, draw=red] (-7.6,0) circle (0.05 cm);
\filldraw[fill=red, draw=red] (-7.4,0) circle (0.05 cm);
\filldraw[fill=red, draw=red] (-7.2,0) circle (0.05 cm);
\filldraw[fill=red, draw=red] (-7.0,0) circle (0.05 cm);
\filldraw[fill=red, draw=red] (-6.8,0) circle (0.05 cm);
\filldraw[fill=red, draw=red] (-6.6,0) circle (0.05 cm);
\filldraw[fill=red, draw=red] (-6.4,0) circle (0.05 cm);
\draw (-6.4,0.5) node {$t_2^{(n)}$};
\draw (-4.5,0.3) node {\small$t_c^{(n)}$};
\draw(-4.5,-.1) --++(0,.2);
\filldraw[fill=red, draw=red] (-4.4,0) circle (0.05 cm);
\filldraw[fill=blue, draw=blue] (-4.6,0) circle (0.05 cm);
\filldraw[fill=blue, draw=blue] (-4.8,0) circle (0.05 cm);
\draw (-1.8,0.5) node {$t_1^{(n)}$};
\filldraw[fill=blue, draw=blue] (-1.8,0) circle (0.05 cm);
\filldraw[fill=blue, draw=blue] (-1.6,0) circle (0.05 cm);
\filldraw[fill=blue, draw=blue] (-1.2,0) circle (0.05 cm);
\filldraw[fill=blue, draw=blue] (-0.9,0) circle (0.05 cm);
\filldraw[fill=blue, draw=blue] (-0.6,0) circle (0.05 cm);
\filldraw[fill=blue, draw=blue] (-0.4,0) circle (0.05 cm);
\draw (-0.4,0.5) node {$x_c^{(n)}$};
\filldraw[fill=red, draw=red] (-0.2,0) circle (0.05 cm);
\filldraw[fill=red, draw=red] (0,0) circle (0.05 cm);
\filldraw[fill=red, draw=red] (0.2,0) circle (0.05 cm);
\filldraw[fill=red, draw=red] (0.5,0) circle (0.05 cm);
\filldraw[fill=red, draw=red] (0.8,0) circle (0.05 cm);
\filldraw[fill=red, draw=red] (1.1,0) circle (0.05 cm);
\filldraw[fill=red, draw=red] (1.4,0) circle (0.05 cm);
\filldraw[fill=red, draw=red] (1.8,0) circle (0.05 cm);
\filldraw[fill=red, draw=red] (2.2,0) circle (0.05 cm);
\filldraw[fill=red, draw=red] (2.4,0) circle (0.05 cm);

\filldraw[fill=red, draw=red] (3.2,0) circle (0.05 cm);
\filldraw[fill=red, draw=red] (3.4,0) circle (0.05 cm);
\filldraw[fill=red, draw=red] (3.6,0) circle (0.05 cm);
\filldraw[fill=red, draw=red] (3.8,0) circle (0.05 cm);
\filldraw[fill=red, draw=red] (4,0) circle (0.05 cm);
\filldraw[fill=red, draw=red] (4.2,0) circle (0.05 cm);
\filldraw[fill=red, draw=red] (4.4,0) circle (0.05 cm);
\filldraw[fill=red, draw=red] (4.6,0) circle (0.05 cm);
\draw (4.6,0.5) node {$b_n$};

\end{tikzpicture}
\caption{\label{figDeform8} Integration contours}
\end{figure}

\subsection{Global Choice of Contours}\label{sec:globalcontours}

Recall the asymptotic function $f(w;\chi_c)$ given by (\ref{AsympFuncCusp}),
\begin{align*}
f(w;\chi_c)=\int_{\R}\log(w-t)d\nu(t)=U^{\nu}(w)+i\int_{\R}\text{arg}(w-t)d\nu(t).
\end{align*}
For every $\delta>0$ we let
$
\Omega_{\delta}:=\{w\in\C: d(w,\text{supp}(\nu))>\delta \}
$.

\begin{Lem}
\label{UniformConv}
The functions $f_n(w),g_{n,1}(w)$ and $g_{n,2}(w)$ converge uniformly to $f(w;\chi_c)$ on $\overline{\Omega}_{\delta}\bigcap \overline{B(0,R)}$, for any $\delta>0$ and any $R>0$.
\end{Lem}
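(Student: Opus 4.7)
The plan is to reduce the uniform convergence of $f_n$ to $f(\cdot;\chi_c)$ to (i) weak convergence of the signed measures $\nu_n\to\nu$ together with (ii) a normal-family/equicontinuity argument, and then to treat $g_{n,j}$ as a perturbation of $f_n$ by a vanishing term.

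First I would establish $\nu_n\rightharpoonup\nu$ as signed Borel measures on $\R$. Comparing the three pieces of (\ref{SignedMeasure}) with the three pieces of (\ref{AsympMeasureCusp}), the left block $\tfrac1n\sum_{\beta_i^{(n)}\le nt_2^{(n)}}\delta_{\beta_i^{(n)}/n}$ converges to $\chi_{[a,t_2]}\varphi\,d\lambda$ by Assumption~\ref{A1} together with $t_2^{(n)}\to t_2$, and analogously for the right block. The middle (negative) block in (\ref{SignedMeasure}) is, by Assumption~\ref{A3}, precisely the difference between uniform counting measure on $\{nt_1^{(n)}+1,\dots,nx_c^{(n)}\}$ and the counting measure of $\beta_i^{(n)}$ in the same window; weak convergence then yields $-(1-\varphi)\chi_{[t_1,\chi_c]}\,d\lambda$ in the limit. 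This gives $\nu_n\rightharpoonup\nu$, and in particular $\supp(\nu_n)$ is eventually contained in any prescribed $\epsilon$-neighborhood of $\supp(\nu)$.

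Next, fix $\delta>0$ and $R>0$, and set $K:=\overline{\Omega}_\delta\cap\overline{B(0,R)}$. For $n$ large, $\supp(\nu_n)\subset\overline{B(0,R+1)}\setminus\Omega_{\delta/2}$, so $(w,t)\mapsto\log(w-t)$ (with a fixed branch consistent with the one used in defining $f_n$ and $f$) is jointly continuous and bounded on $K\times\bigl(\overline{B(0,R+1)}\setminus\Omega_{\delta/2}\bigr)$. Weak convergence of $\nu_n$ against this continuous bounded integrand gives pointwise convergence $f_n(w)\to f(w;\chi_c)$ on $K$. To upgrade to uniform convergence, I would note that on a slightly thickened compact set $K'\supset K$ (still inside $\overline{\Omega}_{\delta/2}\cap\overline{B(0,R+1)}$), the derivatives
\begin{equation*}
f_n'(w)=\int_\R\frac{d\nu_n(t)}{w-t}
\end{equation*}
are uniformly bounded, since $|w-t|\ge\delta/2$ and $|\nu_n|(\R)$ is uniformly bounded. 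Hence $\{f_n\}$ is equicontinuous on $K$, and Arzelà–Ascoli upgrades pointwise to uniform convergence.

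Finally, for $g_{n,j}=\tfrac{1}{n}h_{n,j}+f_n$, it suffices to show $\tfrac{1}{n}h_{n,j}(w)\to 0$ uniformly on $K$. But $h_{n,j}(w)=\log Q_n(nw;\Delta x_j^{(n)})$ is, by (\ref{FuncCont}), a signed sum of at most $|\Delta x_j^{(n)}|$ terms of the form $\pm\log(nw-k)$ with $k$ near $nx_c^{(n)}$. For $w\in K$ each such term is $O(\log n)$ uniformly, and by the scaling of Definition~\ref{def:scaling} one has $|\Delta x_j^{(n)}|=O(n^{1/3})$. Therefore $|h_{n,j}(w)|=O(n^{1/3}\log n)$ uniformly on $K$, and dividing by $n$ yields the claim. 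The main subtle point is the weak convergence step, which relies crucially on Assumption~\ref{A3} to guarantee that the discrete cancellation on $[t_c,t_1]$ is exact so that no uncontrolled boundary mass survives in the limit; once this is settled, the remaining arguments are standard complex-analytic normal-family manipulations.
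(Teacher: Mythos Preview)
Your argument is correct and follows essentially the same route as the paper. The paper's proof is a single line invoking weak convergence of $\mu_n$ together with Vitali's theorem; your version unpacks this by first establishing $\nu_n\rightharpoonup\nu$, then obtaining pointwise convergence of the log-potentials, and finally upgrading to uniform convergence via an equicontinuity/Arzel\`a--Ascoli argument based on the uniform bound on $f_n'$. This last step is precisely the content of Vitali's theorem for holomorphic functions (locally bounded plus pointwise convergence on a set with a limit point implies locally uniform convergence), so the two compactness arguments are the same idea in different packaging. The one place where you go beyond the paper is in treating $g_{n,j}-f_n=\tfrac1n h_{n,j}$ explicitly: your bound $|h_{n,j}(w)|=O(n^{1/3}\log n)$ on $K$ is exactly what is needed and the paper leaves this step implicit.
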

\begin{proof}
This is a standard consequence of the weak convergence of $\mu_n$ and
Vitali's theorem, (see \cite{Remmert} page 157). 
\end{proof}

\begin{Lem}
\label{lemArgFunc}
Consider the function $\nu([x,+\infty))$, where the measure $\nu$ is as defined in (\ref{AsympMeasureCusp}). Then $\nu([x,+\infty))$ is
monotonically decreasing on $(-\infty,t_2)$, constant in $(t_2,t_1)$,
monotonically increasing on $(t_1,\chi_c)$, and
monotonically decreasing on $(\chi_c,+\infty)$.
More precisely, 
\begin{align}
\label{Arg}
\nu([x,+\infty))=\left\{
 \begin{array}{ll}
 \eta_c & \text{if } x\le a\\
 \mu([x,+\infty))-(\chi_c-t_c)  & a<x<t_2\\
 \mu([t_c,+\infty))- (\chi_c-t_c) & t_2<x<t_1\\
 \mu([\chi_c,+\infty))- \int_{x}^{\chi_c}(1-\varphi(t))dt& t_1\leq x\leq \chi_c\\
 \mu([x,+\infty)) & \chi_c<x<b\\
  0 & x\in (b,+\infty)
 \end{array} \right. .
\end{align}
\end{Lem}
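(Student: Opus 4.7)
The plan is to evaluate $\nu([x,+\infty))$ by direct integration in each of the six regions listed in (\ref{Arg}), using only the definition (\ref{AsympMeasureCusp}) of $\nu$ together with the two structural facts that $\varphi=0$ on $(t_2,t_c)$ and $\varphi=1$ on $(t_c,t_1)$ (which follow from Assumption~\ref{A3} and the definition of $t_1,t_2$). After that, monotonicity in each region will be read off from the sign of the density of $\nu$.

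First I would establish the ``total mass'' identity, i.e.\ case $x\le a$. Splitting $\int_a^b\varphi\,dx=1$ according to the subintervals $(a,t_2),(t_2,t_c),(t_c,t_1),(t_1,\chi_c),(\chi_c,b)$ and using the two structural facts gives
\begin{equation*}
\int_a^{t_2}\varphi + \int_{t_1}^{\chi_c}\varphi + \int_{\chi_c}^{b}\varphi = 1-(t_1-t_c).
\end{equation*}
Plugging this into $\nu(\R)=\int_a^{t_2}\varphi+\int_{\chi_c}^{b}\varphi-\int_{t_1}^{\chi_c}(1-\varphi)$ and using $\chi_c+\eta_c-1=t_c$ from Assumption~\ref{A2} yields $\nu(\R)=1+t_c-\chi_c=\eta_c$.

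Next, for each of the remaining cases I would carry out the analogous bookkeeping. For $a<x<t_2$, write $\mu([x,+\infty))$ as the integral of $\varphi$ over the five subintervals above (with the lower endpoint $x$ in place of $a$) and subtract; the ``full'' part $t_1-t_c$ from $(t_c,t_1)$ combines with $\chi_c-t_1$ from the negative piece of $\nu$ to give exactly $\chi_c-t_c$, which is the desired correction. For $t_2<x<t_1$, the contribution of $\nu$ to $(x,t_1)$ is zero, and the same bookkeeping now produces $\mu([t_c,+\infty))-(\chi_c-t_c)$ (treating the subcases $x<t_c$ and $x>t_c$ separately and observing that the difference is absorbed by $\varphi=1$ on $(t_c,t_1)$). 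For $t_1\le x\le\chi_c$, $\nu$ restricted to $(x,+\infty)$ equals $\mu|_{[\chi_c,b]}-(1-\varphi)\,dx|_{[x,\chi_c]}$, which is the stated expression. For $\chi_c<x<b$ only the $\chi_{[\chi_c,b]}\varphi\,dx$ piece survives, giving $\mu([x,+\infty))$; and for $x>b$ the integral is empty.

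Finally, monotonicity follows because the density of $\nu$ is $\varphi\ge 0$ on $[a,t_2]\cup[\chi_c,b]$ and $-(1-\varphi)\le 0$ on $[t_1,\chi_c]$, and $\nu$ vanishes on $(t_2,t_1)\cup(b,+\infty)\cup(-\infty,a)$; hence $\frac{d}{dx}\nu([x,+\infty))=-\varphi(x)\le 0$ on $(-\infty,t_2)$, zero on $(t_2,t_1)$, $\,1-\varphi(x)\ge 0$ on $(t_1,\chi_c)$, and $-\varphi(x)\le 0$ on $(\chi_c,+\infty)$, matching the monotonicity claims. There is no real obstacle here; the only mildly delicate point is the bookkeeping in case $t_2<x<t_1$, where one must verify that the two different expressions coming from $x<t_c$ and $x>t_c$ both reduce to $\mu([t_c,+\infty))-(\chi_c-t_c)$ by using $\varphi\equiv 1$ on $(t_c,t_1)$.
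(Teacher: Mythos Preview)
Your proof is correct and uses the same ingredients as the paper (the definition of $\nu$ together with $\varphi=0$ on $(t_2,t_c)$ and $\varphi=1$ on $(t_c,t_1)$). The paper streamlines your case-by-case bookkeeping into a single identity: using those two structural facts one can replace $\chi_{[t_1,\chi_c]}$ by $\chi_{[t_c,\chi_c]}$ in both pieces of (\ref{AsympMeasureCusp}) to get
\[
\nu([x,+\infty))=\mu([x,+\infty))-\int_x^{\infty}\chi_{[t_c,\chi_c]}(t)\,dt,
\]
from which all six cases of (\ref{Arg}) and the monotonicity statements are read off immediately, without the separate total-mass computation or the $x\lessgtr t_c$ subcases you mention.
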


\begin{proof}
By definition of $\nu$ we have
\begin{align*}
\nu([x,+\infty))&=\int_{x}^{\infty}(\chi_{[a,t_2]}(t)+\chi_{[t_1,\chi_c]}(t)+\chi_{[\chi_c,b]}(t))\varphi(t)dt-\int_{x}^{\infty}\chi_{[t_1,\chi_c]}dt\\
&=\int_{x}^{\infty}(\chi_{[a,t_2]}(t)+\chi_{[t_c,\chi_c]}(t)+\chi_{[\chi_c,b]}(t))\varphi(t)dt-\int_{x}^{\infty}\chi_{[t_c,\chi_c]}dt\\
&=\mu([x,+\infty))-\int_{x}^{\infty}\chi_{[t_c,\chi_c]}(t)dt,
\end{align*}
from which (\ref{Arg}) follows. The monotonicity properties are immediate from these formulas.
\end{proof}

Since $f(\overline{w};\chi_c)=\overline{f(w;\chi_c)}$ it is sufficient to prove the existence of the contours in the upper-half plane $\Hp$.
\begin{Lem}
\label{GlobalDescentAsympFunc}
The asymptotic function $f(w;\chi_c)$ has global steepest ascent/descent contours in the upper half plane as shown in figure \ref{figAscentAsymp}. 
\end{Lem}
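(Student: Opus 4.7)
The strategy is to construct the global contours by following level curves of $\mathrm{Im}(f(\cdot;\chi_c)-f(t_c;\chi_c))$ in $\Hp$. Since the measure $\nu$ from (\ref{AsympMeasureCusp}) is supported on $\R$, the function $f$ is holomorphic in $\Hp$, so $\mathrm{Im}(f)$ is harmonic there and its level sets are real-analytic curves away from critical points of $f$.

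First, I would do the local analysis at $t_c$. Using Assumption \ref{A2} and Lemma \ref{Derivative3} to expand
\begin{equation*}
f(w;\chi_c)-f(t_c;\chi_c)=\frac{f'''(t_c;\chi_c)}{6}(w-t_c)^3+O((w-t_c)^4),
\end{equation*}
with $f'''(t_c;\chi_c)>0$, and writing $w-t_c=re^{i\theta}$, one sees that the level set $\mathrm{Im}(f-f(t_c))=0$ consists locally of six analytic arcs emanating from $t_c$ at angles $\theta=k\pi/3$, $k=0,\ldots,5$, and that $\mathrm{Re}(f-f(t_c))$ alternates sign on adjacent arcs. In particular, in the closed upper half plane one obtains two arcs going into $\Hp$ at angles $\pi/3$ (descent) and $2\pi/3$ (ascent), together with segments of the real axis to the left and right of $t_c$.

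Next, I would extend each of these two arcs globally in $\Hp$ along the connected component of $\{\mathrm{Im}(f)=\mathrm{Im}(f(t_c))\}\cap\Hp$ to which it belongs. The crucial input is that $f'$ has no zero in $\Hp$: by the characterization of $\mathcal{L}$ from \cite{Duse14a} recalled above, a point $(\chi,\eta)\in\mathcal{P}$ lies in $\mathcal{L}$ iff $f'_{(\chi,\eta)}$ has a root in $\Hp$, and since $(\chi_c,\eta_c)\in\mathcal{E}=\partial\mathcal{L}$ with the single root sitting at $t_c\in\R$, there can be no additional root in $\Hp$. Hence each maximally extended arc carries no critical point of $f$ in its interior and must terminate either on $\R$ or at $\infty$.

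To identify the endpoints, I would use the boundary value formula $\lim_{\varepsilon\to 0^+}\mathrm{Im}(f(x+i\varepsilon))=\pi\cdot\nu([x,+\infty))$ together with the explicit description of $x\mapsto\nu([x,+\infty))$ given in Lemma \ref{lemArgFunc}. Setting this equal to $\pi\cdot\nu([t_c,+\infty))$ and using the monotonicity properties described there, together with the cusp conditions (\ref{CupCond1})--(\ref{CupCond2}) and the relation $\chi_c-t_c=(t_1-t_c)e^{-C_I(t_c)}$, one reads off the locations on $\R$ where each arc lands, which must match figure \ref{figAscentAsymp}. Monotonicity of $\mathrm{Re}(f)$ along each arc is then automatic: on a level curve of $\mathrm{Im}(f)$, the Cauchy--Riemann equations give $|\nabla\mathrm{Re}(f)|=|f'|>0$ tangent to the curve away from critical points.

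The main obstacle is controlling the global topology: one must rule out that the two arcs leaving $t_c$ into $\Hp$ rejoin to form a closed loop, or land at endpoints incompatible with the contours of figure \ref{figContour1}. This is handled by the explicit computation of boundary values via Lemma \ref{lemArgFunc} (which shows that $\nu([x,+\infty))$ is strictly monotone on each of $(-\infty,t_2)$, $(t_1,\chi_c)$, $(\chi_c,+\infty)$, so at most one endpoint per such interval exists), combined with the fact that $\mathrm{Re}(f(w))\to+\infty$ along any path with $w\to\infty$ in $\Hp$ (since $\nu(\R)>0$), forcing exactly one of the ascent arcs to escape to $\infty$ and the other branches to land at the predicted real-axis endpoints.
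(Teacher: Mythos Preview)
Your approach is essentially the same as the paper's: local cubic expansion at $t_c$, following level curves of $\mathrm{Im}(f)$ in $\Hp$, boundary values $\pi\nu([x,+\infty))$ via Lemma~\ref{lemArgFunc}, and the growth $\mathrm{Re}(f)\to+\infty$ at infinity. Your observation that $f'$ has no zero in $\Hp$ (since $(\chi_c,\eta_c)\notin\mathcal{L}$) is correct and is the same content as the paper's appeal to Theorem~3.1 of \cite{Duse14a}.

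There is, however, a genuine gap in your endpoint analysis. By Lemma~\ref{lemArgFunc}, $\nu([x,+\infty))$ is \emph{constant} on the whole interval $(t_2,t_1)$, equal to $\nu([t_c,+\infty))$. Hence $t_1$ and $t_2$ are both solutions of the boundary equation $\pi\nu([x,+\infty))=\mathrm{Im}(f(t_c))$, and your strict-monotonicity argument on $(-\infty,t_2)$, $(t_1,\chi_c)$, $(\chi_c,+\infty)$ does not exclude them. Nothing you wrote prevents the descent arc from landing at $t_1$ or the ascent arc from landing at $t_2$; the cusp relations (\ref{CupCond1})--(\ref{CupCond2}) do not help here.

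The paper closes this gap with a maximum-principle argument: if, say, the descent arc $\mathcal{D}$ landed at $t_1$, then $\mathcal{D}\cup[t_c,t_1]$ would bound a region in $\overline{\Hp}$ on whose entire boundary $\mathrm{Im}(f)=\mathrm{Im}(f(t_c))$ (since $\nu([x,+\infty))$ is constant on $[t_c,t_1]$), forcing the harmonic function $\mathrm{Im}(f)$ to be constant in the interior --- a contradiction. The same argument rules out $t_2$. Finally, the ascent arc cannot share the endpoint $t_e$ with the descent arc because $U^\nu$ is continuous there and $\mathrm{Re}(f)$ is strictly increasing along the ascent and strictly decreasing along the descent. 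You should add these two short arguments; once they are in place your proof matches the paper's.
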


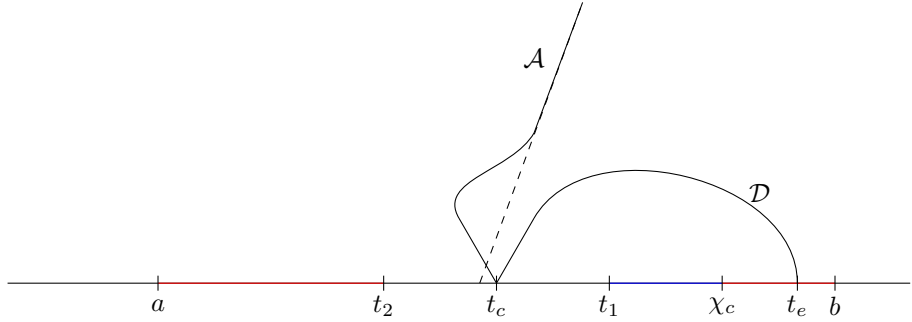
\begin{figure}[H]
\centering
\begin{tikzpicture}
\draw (-6,0) --(6,0);
\draw[red] (-4,0) -- (-1,0);
\draw[blue] (2,0) -- (3.5,0);
\draw[red] (3.5,0) -- (5,0);
\draw (-4,0.1)--(-4,-0.1);
\draw (-1,0.1)--(-1,-0.1);
\draw (0.5,0.1)--(0.5,-0.1);
\draw (2,0.1)--(2,-0.1);
\draw (3.5,0.1)--(3.5,-0.1);
\draw (4.5,0.1)--(4.5,-0.1);
\draw (5,0.1)--(5,-0.1);
\draw (-4,-0.3) node {$a$};
\draw (-1,-0.3) node {$t_2$};
\draw (0.5,-0.3) node {$t_c$};
\draw (2,-0.3) node {$t_1$};
\draw (3.5,-0.3) node {$\chi_c$};
\draw (4.5,-0.3) node {$t_e$};
\draw (5,-0.3) node {$b$};
\draw (0.5,0) -- (1,{0.5*sqrt (3)});
\draw (0.5,0) -- (0,{0.5*sqrt (3)});
\draw (1,{0.5*sqrt (3)}) to [out=60,in=90] (4.5,0);
\draw (0,{0.5*sqrt (3)}) to [out=120,in=240] (1,2);
\draw (1,2) --({0.277+4*0.342},{4*0.934});
\draw[dashed] (0.277,0) -- ({0.277+4*0.342},{4*0.934});
\draw (4,1.2) node {$\mathcal{D}$};
\draw (1,3) node {$\mathcal{A}$};
\end{tikzpicture}
\caption{\label{figAscentAsymp} Steepest ascent and descent paths for the asymptotic function $f(w;\chi_c)$. The support of $\nu^{+}$ is indicated by a red line and the support of $\nu^{-}$ by a blue line.}

\end{figure}

\begin{proof}
Since $f(\overline{w};\chi_c)=\overline{f(w;\chi_c)}$ it is sufficient to prove the existence of the contours in the upper-half plane $\Hp$. We note that $U^{\nu}$ is real analytic in $\C\backslash \text{supp}(\nu)$ and that $\int_{\R}\arg(w-t)d\nu(t)$ is real analytic in $\C\backslash (-\infty,b]$. Moreover, the boundary values on the real axis are given by
\begin{align*}
\lim_{\substack{w\to x\in \R\\ w\in \Hp}} U^{\nu}(w)=U^{\nu}(x)
\end{align*}
and
\begin{align*}
\lim_{\substack{w\to x\in \R\\ w\in \Hp}} \int_{\R}\arg(x-t)d\nu(t)=\pi\nu([x,+\infty)).
\end{align*}
In particular, $U^{\nu}(w)$ has a continuous extension to all of $\C$. 

Since $f'''(t_c;\chi_c,\eta_c)>0$ the local steepest ascent/descent structure around $t_c$ is as in figure \ref{figLocalContourAsymp}.

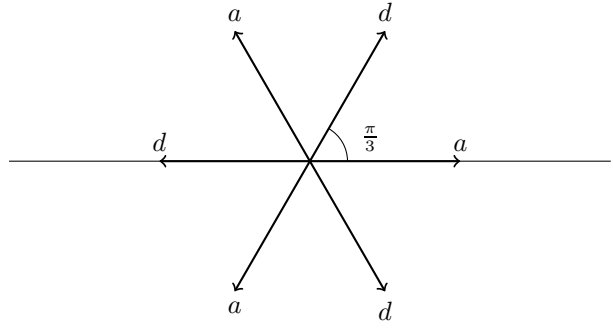
\begin{figure}[H]
\centering
\begin{tikzpicture}
\draw (-4,0) --(4,0);
\draw[thick,->,above] (0,0)--(2,0) node {$a$};
\draw[thick,->,above] (0,0)--(1,{sqrt(3)}) node {$d$};
\draw[thick,->,above] (0,0)--(-1,{sqrt(3)}) node {$a$};
\draw[thick,->,above] (0,0)--(-2,0) node {$d$};
\draw[thick,->,below] (0,0)--(1,-{sqrt(3)}) node {$d$};
\draw[thick,->,below] (0,0)--(-1,-{sqrt(3)}) node {$a$};
\draw (0.5,0) arc (0:60:0.5);
\draw (0.8,0.3) node {\small$\frac{\pi}{3}$};
\end{tikzpicture}
\caption{\protect\label{figLocalContourAsymp}Local steepest ascent/descent contours of the asymptotic function $f(\omega,\chi_c)$, where $a$ denotes ascent contour and $d$ denotes descent contour.}

\end{figure}

Recall that the contours of steepest ascent/descent are those for which $\text{Im\,}f(w)=\text{Im\,}f(t_c)=\pi\mu([t_c,b])$. Note that $U^{\nu}(w)=\nu(\R)\log\vert w\vert+O(\vert w\vert^{-1})$ as $\vert w\vert\to \infty$. Therefore $\lim_{\substack{w\to \infty\\ w\in \Hp}}U^{\nu}(w)=+\infty$. This implies that the descent contour have to be contained in some ball $B(0,R)$, $R$ sufficiently large. On the other hand since the function $\text{Im\,}f(w)$ is real analytic, the curve $\text{Im\,}f(w)=\text{Im\,}f(t_c)$ has to be either a closed curve in $B(0,R)\backslash \text{supp}(\nu)$ or end somewhere in $\text{supp}(\nu)$. Assume the first case. Then, clearly there has to be a point $t_p\neq t_c$ on the curve such that $f'(t_p)=0$. By Theorem 3.1 in \cite{Duse14a}, we must for such a $t_p$ have $t_p\in \R\backslash \supp(\nu)$ and $f''(t_p)\neq 0$. At such a point we have descent contour exiting at angle $\pm \pi/2$ and ascent contours exiting at $0$ and $\pi$. This gives a contradiction. We may therefore assume the second case holds

It follows from Lemma \ref{lemArgFunc} that if $\pi\nu([t_c,\infty))<0$ then the equation $\pi\nu([x,\infty))=\pi\mu([t_c,+\infty))$ has no solution and the steepest
descent contour from $t_c$ has to go to infinity, which is impossible. If $\pi\nu([t_c,\infty))=0$, then we have to be in the first case above which is impossible.
Thus, $\pi\nu([t_c,\infty))>0$ and Lemma \ref{lemArgFunc} implies that the equation $\pi\nu([x,\infty))=\pi\mu([t_c,+\infty))$ 
has at least one solution $t_e$ for $x\in (\chi_c,b)$ and no solution for $x\in(-\infty,t_2)\cup (t_1,\chi_c)\cup[b,\infty)$. In figure \ref{figArgument} we give a plot of what the function $\pi\nu([x,+\infty))$ may look like. If $\nu([x,\infty))$ is strictly monotonically decreasing at $t_e$, then $t_e$ is the unique solution to the equation above. Otherwise, by the monotonicity of $\nu([x,\infty))$, there exists an interval $[t_e^-,t_e^+]$ such that $\pi\nu([x,\infty))=\pi\mu([t_c,+\infty))$ for all $x\in[t_e^-,t_e^+]$. In particular, $(t_e^-,t_e^+)\cap\text{supp}(\mu)=\varnothing$.

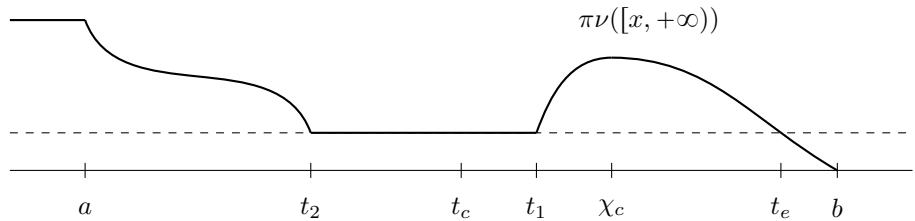
\begin{figure}[H]
\centering
\begin{tikzpicture}
\draw (-6,0) --(6,0);
\draw[dashed] (-6,0.5)--(6,0.5);
\draw (-5,-0.5) node {$a$};
\draw (-2,-0.5) node {$t_2$};
\draw (-5,-0.1) --(-5,0.1);
\draw (-2,-0.1) --(-2,0.1);
\draw (0,-0.5) node {$t_c$};
\draw (0,-0.1) --(0,0.1);
\draw (1,-0.5) node {$t_1$};
\draw (1,-0.1) --(1,0.1);
\draw (2,-0.5) node {$\chi_c$};
\draw (2,-0.1) --(2,0.1);
\draw (4.25,-0.5) node {$t_e$};
\draw (4.25,-0.1) --(4.25,0.1);
\draw (5,-0.5) node {$b$};
\draw (5,-0.1) --(5,0.1);
\draw[thick] (-6,2) --(-5,2);
\draw[thick] (-2,0.5) --(1,0.5);
\draw[thick] (-5,2) to [out=290,in=110] (-2,0.5);
\draw[thick] (1,0.5) to [out=70,in=180] (2,1.5);
\draw[thick] (2,1.5) to [out=0,in=150] (5,0);
\draw (2.5,2) node {$\pi \nu([x,+\infty))$};
\end{tikzpicture}
\caption{An example of a plot of the function $\int \arg(x-t)d\nu(t)$ for some possible $\nu$.}
\label{figArgument}
\end{figure}

By Lemma \ref{lemArgFunc} and the discussion above, the only possible end points are $t_2,t_1$ and $t_e$ or $t_2,t_1,t_e^-$ and $t_e^+$. Assume that it ends at $t_1$. Then we get a closed contour in $\overline{\Hp}$ containing the interval $[t_c,t_1]$, and such that the boundary value of $\text{Im}[f(w)]$ equals $\text{Im}[f(t_c)]=\pi\mu([t_c,b])$ everywhere on the curve. However, since $\text{Im}[f(w)]$ is harmonic inside the domain bounded by the curve, this implies that $\text{Im}[f(w)]$ is constant, a contradiction. Similarly, the descent contour cannot end in $t_2$. Thus it has to end either at $t_e$ or one of $t_e^-$ and $t_e^+$. 


This proves the existence of the global steepest descent path of $f(w,\chi_c)$. We now consider the ascent path. Recall that the ascent and descent paths cannot intersect. By considering the local ascent and descent contours we know it cannot end at $t_1$. Suppose that it ends a $t_2$. However, by a similar argument as before, this is not possible. Moreover, by the continuity of $U^{\nu}(w)$ it cannot end at $t_e$. Thus, the ascent contour will become an asymptote of the line $\{te^{i\theta}:t\in[0,+\infty), \theta=\pi\mu([t_c,b])\}$ since $\int_{\R}\arg(w-t)d\nu(t)=\nu(\R)\arg(w)+O(\vert w\vert^{-1})$ as $\vert w\vert\to \infty$.  Now assume that the decent path ends at $t_e^-$ and that the ascent path ends at $t_e^+$. Again by forming a closed contour containing the interval $[t_e^-,t_e^+]$ and exploiting the harmonicity of $\text{Im}[f(w)]$, we get a contradiction. Thus as before, contour will become an asymptote of the line $\{te^{i\theta}:t\in[0,+\infty), \theta=\pi\mu([t_c,b])\}$. 

\end{proof}



\subsection{Estimates and localization}\label{sec:estimates}

We start with some preliminary results that we will need. By lemma \ref{UniformConv} and Assumption \ref{A4}, if we take $\delta_0$ small enough, then $|g_{n,i}(w)|\le C$
for $|w-t_c|\le \delta_0$, where $C$ is a constant. We have the Taylor expansion
\begin{align}
\label{LocalGfunc}
g_{n,i}(z)=g_{n,i}(t_c)+g_{n,i}'(t_c)(z-t_c)+\frac{1}{2}g_{n,i}''(t_c)^2(z-t_c)^2+\frac{1}{6}g_{n,i}'''(t_c)^3(z-t_c)^3+r_{n,i}(z)(z-t_c)^4,
\end{align}
where
\begin{align}
\label{LocalGerror}
r_{n,i}(z)=\frac{1}{2\pi i}\int_{\vert w-t_c\vert=\delta_3}\frac{g_{n,i}(w)}{(w-t_c)^5(1-\frac{z-t_c}{w-t_c})}dw.
\end{align}
From (\ref{LocalGerror}) we see that, if we take $\delta_1\leq \delta_0/2$, then there is a constant $C(\delta_0)$ so that
\begin{align}
\label{LocalAbsError}
\vert r_{n,i}(z)\vert \leq C(\delta_0),
\end{align}
for all $z\in B(t_c,\delta_1)$.  Consider a curve
\begin{equation}\label{Zedcurve}
z(t)=t_c+\zeta(t),
\end{equation}
$t\in I$, such that $|\zeta(t)|\le Ct\le\delta_1$ for all $t\in I$, $I$ an interval.
From (\ref{LocalGfunc}) we obtain

\begin{align}
\label{RealTaylorExp}
\text{Re}[g_{n,i}(z(t))-g_{n,i}(t_c)]&=g_{n,i}'(t_c)\text{Re}[\zeta(t)]+\frac{1}{2}g_{n,i}''(t_c)\text{Re}[\zeta(t)^2]+\frac{1}{6}g_{n,i}'''(t_c)\text{Re}[\zeta(t)^3]&\\&+\text{Re}[r_{n,i}(z(t))\zeta(t)^4].\nonumber
\end{align}
Note that, by the assumption on $\zeta(t)$,

\begin{align}
\label{RealPart}
\vert \text{Re}[\zeta(t)]\vert\leq Ct,\quad \vert \text{Re}[\zeta(t)^2]\vert\leq Ct^2,\quad \vert \text{Re}[r_{n,i}(z(t))\zeta(t)^4]\vert\leq Ct^4,
\end{align}
for all $t\in I$, where the constant $C$ is independent of $\delta_1$. 

By the definition (\ref{FuncNonAsymp2}), we have that
\begin{align*}
h_{n,i}(z)=\sum_{k=\Delta x_i^{(n)}}^{-1}\log\bigg(z-\bigg(\frac{x_c^{(n)}}{n}+\frac{k}{n}\bigg)\bigg)
\end{align*}
if $\Delta x_i^{(n)}<0$, $h_{n,i}(z)=0$ if $\Delta x_i^{(n)}=0$ and 
\begin{align*}
h_{n,i}(z)=-\sum_{k=1}^{\Delta x_i^{(n)}}\log\bigg(z-\bigg(\frac{x_c^{(n)}}{n}+\frac{k}{n}\bigg)\bigg),
\end{align*}
if $\Delta x_i^{(n)}>0$. Write $\kappa_i=\text{sgn}(\Delta x_i^{(n)})$ $(=0$ if $\Delta x_i^{(n)}=0)$. Then,

\begin{align}
\label{hDerivaive}
h_{n,i}'(z)=-\kappa_i\sum_{k=1}^{\vert \Delta x_i^{(n)}\vert}\frac{1}{z-\big(\frac{x_c^{(n)}}{n}+\kappa_i\frac{k}{n}\big)}.
\end{align}
From this, and (\ref{ScalingDeltaX}), it follows that, if $\vert z-t_c\vert\leq \delta_1$, with $\delta_1$ small enough, then
\begin{align}
\label{hDerivaiveIneq}
\vert h_{n,i}'(z)\vert=Cn^{1/3}, \quad \vert h_{n,i}''(z)\vert\leq Cn^{1/3}.
\end{align} 

Similarly to (\ref{LocalGfunc}), we get 

\begin{align}
\label{hTaylorExp}
 h_{n,i}(z(t))= h_{n,i}(t_c)+h_{n,i}'(t_c)\zeta(t)+s_{n,i}(z(t))\zeta(t)^2,
\end{align} 
where
\begin{align}
\label{hError}
\vert s_{n,i}(z(t))\vert\leq Cn^{1/3},
\end{align} 
and $z(t)$ is given by (\ref{Zedcurve}).

We will now discuss the localization of the asymptotic analysis of the kernel to a neighbourhood of $t_c$. Let $\delta_1>0$ and let $B_2=B(t_c,\delta_1)$ be a (small)
ball around $t_c$. The descent contour $\mathcal{D}$ from Lemma \ref{GlobalDescentAsympFunc} intersects $\dv B_2\cap \Hp$ at the point $D_2$, se figure 
\ref{figLocalContourAsympfn}

\begin{figure}[H]

\centering
\begin{tikzpicture}
\draw (-4,0) --(4,0);
\filldraw (0,0) circle (0.4mm);
\draw (0,-0.2) node {$t_c$};
\draw (0,0) circle (1cm);
\draw (1.2,-0.6) node {$B_2$};
\draw (0,0) to [out=80,in=210] (0.574,0.812); 
\filldraw (0.574,0.812) circle (0.4mm); 
\draw[->] (0.574,0.812) to [out=30,in=180] (1.8,1.2); 
\draw (1.8,1.2) to [out=0,in=180] (2.1,1.2); 
\draw (1.8,1.5) node {$\mathcal{D}$};
\draw (0.574,1.1) node {$D_2$}; 
\end{tikzpicture}
\caption{\protect\label{figLocalContourAsympfn} The global descent path $\mathcal{D}$ close to $t_c$}

\end{figure}
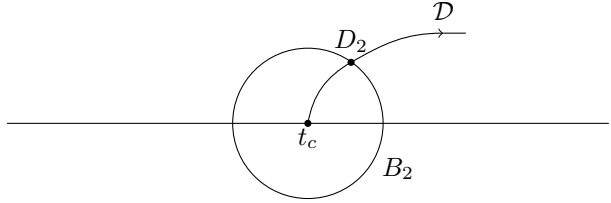

We now formulate a lemma that will allow us to neglect the contribution from $\mathcal{D}$ outside $B_2$.
\begin{Lem}
\label{lemDisc2}
If we choose $\delta_1$ sufficiently small, there is a constant $b_0(\delta_1)>0$ such that for
$n$ large enough
\begin{equation}
\label{IneqDisc1RealPartg}
\text{Re\,}[g_{n,i}(D_2)-g_{n,i}(t_c)]\leq -b_0(\delta_1).
\end{equation}
\end{Lem}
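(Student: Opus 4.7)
The plan is to reduce the estimate to the global descent property of the \emph{asymptotic} function $f$, and then to argue that $g_{n,i}$ is a small perturbation of $f$ on the ball $B_2$.

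First, I would exploit the definition of $D_2$: since $D_2$ lies on the global steepest descent contour $\mathcal{D}$ of $f(\cdot;\chi_c)$ constructed in Lemma \ref{GlobalDescentAsympFunc}, and $\mathrm{Re}\,f$ is strictly decreasing along $\mathcal{D}$ away from $t_c$, we have $\mathrm{Re}[f(D_2)-f(t_c)]<0$. More quantitatively, by Assumption \ref{A2} we have $f'(t_c;\chi_c,\eta_c)=f''(t_c;\chi_c,\eta_c)=0$ and by Lemma \ref{Derivative3} $f'''(t_c;\chi_c,\eta_c)>0$, while the local descent analysis at the cubic saddle (figure \ref{figLocalContourAsympfn}) shows that $\mathcal{D}$ leaves $t_c$ tangent to the direction $\theta=\pi/3$, so that $D_2 = t_c + \delta_1 e^{i\pi/3}+O(\delta_1^2)$. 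A Taylor expansion then yields
\begin{equation*}
\mathrm{Re}[f(D_2)-f(t_c)] = \frac{1}{6}f'''(t_c;\chi_c,\eta_c)\,\mathrm{Re}[(D_2-t_c)^3] + O(\delta_1^4) = -\frac{1}{6}f'''(t_c;\chi_c,\eta_c)\,\delta_1^3 + O(\delta_1^4).
\end{equation*}
Choosing $\delta_1$ small enough, we obtain $\mathrm{Re}[f(D_2)-f(t_c)] \le -c_1(\delta_1)$ for a constant $c_1(\delta_1)>0$.

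Next, I would compare $g_{n,i}$ with $f$ on a small ball around $t_c$. Pick $\delta_1$ smaller than $\min(t_c-t_2,\,t_1-t_c,\,(\chi_c-t_c)/2)$. The first two bounds ensure that $\overline{B_2}\cap\mathrm{supp}(\nu)=\varnothing$ (recall $\mathrm{supp}(\nu)\subset [a,t_2]\cup[t_1,\chi_c]\cup[\chi_c,b]$), so Lemma \ref{UniformConv} gives $f_n\to f$ uniformly on $\overline{B_2}$; in particular
\begin{equation*}
\mathrm{Re}[f_n(D_2)-f_n(t_c)] = \mathrm{Re}[f(D_2)-f(t_c)] + o(1).
\end{equation*}
The third bound ensures that the poles $x_c^{(n)}/n + \kappa_i k/n$ of $h_{n,i}$ all lie at distance bounded below by a positive constant from $\overline{B_2}$, uniformly in $n$. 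Hence each of the $|\Delta x_i^{(n)}|=O(n^{1/3})$ logarithms in $h_{n,i}$ is uniformly bounded on $\overline{B_2}$, giving
\begin{equation*}
\frac{1}{n}\bigl|h_{n,i}(D_2)-h_{n,i}(t_c)\bigr| \le \frac{C\,|\Delta x_i^{(n)}|}{n}=O(n^{-2/3}).
\end{equation*}

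Combining the two bounds with the decomposition $g_{n,i}=f_n+\frac{1}{n}h_{n,i}$ we conclude
\begin{equation*}
\mathrm{Re}[g_{n,i}(D_2)-g_{n,i}(t_c)] \le -c_1(\delta_1) + o(1),
\end{equation*}
so for all $n$ sufficiently large the right-hand side is bounded above by $-b_0(\delta_1):=-c_1(\delta_1)/2$. The main subtlety is the justification of the Taylor expansion of $f_n$ at the boundary point $t_c$ in the first step: this is legitimate because Assumption \ref{A3} guarantees that $t_c\in (t_2^{(n)},t_1^{(n)})$, so $f_n$ is analytic in the branch chosen at $t_c$, and the convergence $f_n^{(k)}(t_c)\to f^{(k)}(t_c)$ of the low-order derivatives (needed if one wants a version of the local argument that works directly for $f_n$) follows from Vitali's theorem applied on a slightly larger disk together with Assumption \ref{A4}; however, for the global bound only uniform convergence on $\overline{B_2}$ is needed, which is already supplied by Lemma \ref{UniformConv}.
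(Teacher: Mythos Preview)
Your proof is correct, and it takes a slightly different route from the paper's.  The paper works directly with $g_{n,i}$: it parametrizes the straight segment from $t_c$ to $D_2$ by $\zeta(t)=te^{i\theta(\delta_1)}$, Taylor expands $g_{n,i}$ to third order using the pre-established estimates \eqref{RealTaylorExp}--\eqref{LocalAbsError}, and then invokes the convergence $g_{n,i}^{(k)}(t_c)\to f^{(k)}(t_c)$ for $k=1,2,3$ (which follows from Lemma~\ref{UniformConv} via Cauchy's integral formula) to bound the first two derivatives by an arbitrarily small $\varepsilon_1$ and the third from below by a positive constant $c_1$.  In this way the paper never separates $g_{n,i}$ into $f_n$ and $\frac{1}{n}h_{n,i}$ at this stage.

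Your approach is more modular: you first get the bound for the fixed function $f$ from the descent property of $\mathcal{D}$ (quantified by the cubic Taylor expansion), then transfer it to $f_n$ via the uniform convergence of Lemma~\ref{UniformConv}, and finally discard the $\frac{1}{n}h_{n,i}$ contribution by the crude $O(n^{-2/3})$ bound.  This buys you a cleaner argument that uses only \emph{uniform} convergence of $f_n\to f$ on $\overline{B_2}$, not convergence of derivatives at $t_c$, and it makes no use of the Taylor apparatus \eqref{LocalGfunc}--\eqref{LocalAbsError} for $g_{n,i}$.  The paper's approach, on the other hand, sets up exactly the same Taylor framework that is reused later in the local analysis of Section~\ref{sec:local}, so it is economical in that sense.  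Both proofs hinge on the same geometric fact that $\theta(\delta_1)\to\pi/3$, giving $\mathrm{Re}[(D_2-t_c)^3]\sim -\delta_1^3$.
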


\begin{proof}
We take $\zeta(t)=te^{i\theta(\delta_1)}$, $0\le t\le\delta_1$, where $\theta(\delta_1)$ is chosen so that $\zeta(\delta_1)=D_2$. It follows
from Lemma \ref{UniformConv}, $f'(t_c)=f''(t_c)=0$, and $f'''(t_c)>0$ that, given $\eps_1>0$, we have
\begin{align}
\label{GfuncDerivativeEst}
\vert g_{n,i}'(t_c)\vert \leq \eps_1,\quad \vert g_{n,i}''(t_c)\vert \leq \eps_1 
\end{align}
for large $n$, and there is a $c_1>0$ so that
\begin{align}
\label{GfuncDerivative3}
g_{n,i}'''(t_c)\geq c_1 
\end{align}
for all sufficiently large $n$. Also, since $f'(t_c)=f''(t_c)=0$ and $f'''(t_c)>0$, a local Taylor expansion of $f$ shows that 
$\theta(\delta_1)\to\pi/3$ as $\delta_1\to 0$. Consequently,
\begin{equation}
\label{zedcube}
\text{Re\,}\zeta(t)^3=t^3\cos 3\theta(\delta_1)\le -\frac 12 t^3
\end{equation}
for $0\le t\le\delta_1$ if $\delta_1$ is sufficiently small. From (\ref{RealTaylorExp}), (\ref{RealPart}), (\ref{GfuncDerivativeEst}), and
(\ref{GfuncDerivative3}) we see that
 \begin{align}
\label{GfuncEndpoint}
 \text{Re\,}[g_{n,i}(D_2)- g_{n,i}(t_c)]\geq -\frac{1}{12}c_1\delta_1^3+C\eps_1(\delta_1+\delta_1^2)+C\delta_1^4\nonumber\\
 = c_1\delta_1^3\bigg[-\frac{1}{12}-C'\eps_1(\delta_1^{-2}+\delta_1^{-1})+C'\delta_1\bigg].
\end{align}
We can now choose $\delta_1$ so that $C'\delta_1\le1/48$ and then $\epsilon_1$ so that $C'\eps_1(\delta_1^{-2}+\delta_1^{-1})\le 1/48$. We then get 
(\ref{IneqDisc1RealPartg}) with $b_0(\delta_1)=c_1\delta_1^3/24$ and the lemma is proved.
\end{proof}

Let $B_3=B(t_e,\delta_2)$ be a small ball around $t_e$. The descent contour $\mathcal{D}$ intersects $\dv B_3\cap \Hp$ at the point $D_3$. Let $\mathcal{C}$ be as in figure \ref{figLocalContourEndfn} and $\mathcal{D}'$ be the part of $\mathcal{D}$ outside $B_2$ and $B_3$, so that  $\mathcal{D}'$ lies strictly in $\Hp$. Hence, from 
Lemma \ref{UniformConv}, it follows that $g_{n,i}(z)\to f(z)$ uniformly on $\mathcal{D}'$ as $n\to\infty$.
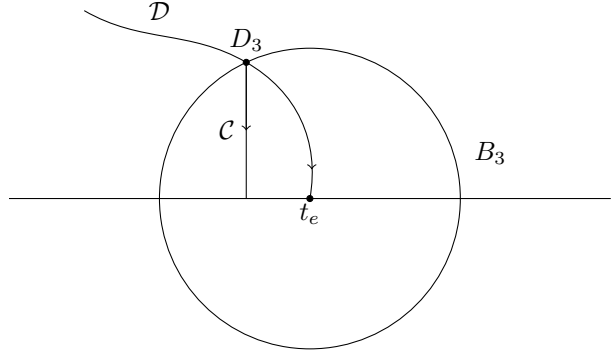
\begin{figure}[H]

\centering
\begin{tikzpicture}
\draw (-4,0) --(4,0);
\filldraw (0,0) circle (0.4mm);
\draw (0,-0.2) node {$t_e$};
\draw (0,0) circle (2cm);
\draw (-2,2.5) node {$\mathcal{D}$};
\draw (0,0) to [out=80,in=330] (-{2*0.423},{2*0.906}); 
\draw (-{2*0.423},{2*0.906}) to [out=150,in=330] (-3,2.5); 
\draw[->] (0.025,0.4) --(0.025,0.39);
\draw (-{2*0.423},{2*0.906}) -- (-{2*0.423},0); 
\draw[->] (-{2*0.423},{2*0.906}) -- (-{2*0.423},{0.906}); 
\draw(-1.1,{0.906}) node {$\mathcal{C}$}; 
\filldraw (-{2*0.423},{2*0.906}) circle (0.4mm); 
\draw (2.4,0.6) node {$B_3$};
\draw (-{2*0.423},{0.3+2*0.906}) node {$D_3$}; 
\end{tikzpicture}
\caption{\protect\label{figLocalContourEndfn}The global descent path $\mathcal{D}$ and $\mathcal{C}$ close to $t_e$}

\end{figure}

The next lemma gives the estimate we need on $\mathcal{C}$.
\begin{Lem}
\label{lemDisc3}
There is a constant $b_1(\delta_2)$ such that $b_1(\delta_2)\to 0$ as $\delta_2\to 0$ and
\begin{align}
\label{IneqDisc3}
\text{Re\,}[g_{n,i}(z)-g_{n,i}(D_3)]\leq b_1(\delta_2)
\end{align}
for all $z\in \mathcal{C}$ if $n$ is sufficiently large.
\end{Lem}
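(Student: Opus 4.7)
The plan is to reduce the lemma to an analogous estimate for the asymptotic function $f(z;\chi_c)$ and then transfer the bound to $g_{n,i}$ using the uniform convergence of Lemma \ref{UniformConv}. The key observation is that since $D_3,z\in\overline{B(t_e,\delta_2)}$, the diameter of the relevant region shrinks with $\delta_2$, and $\text{Re}[f]=U^{\nu}$ is continuous across all of $\mathbb{C}$ (even on $\text{supp}(\nu)$) because $\nu$ has bounded density (between $-1$ and $1$).

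First I would show that $|\text{Re}[f(z;\chi_c)-f(D_3;\chi_c)]|\le \omega(2\delta_2)$ for all $z\in\mathcal{C}$, where $\omega$ is a modulus of continuity for $U^{\nu}$ on the compact set $\overline{B(t_e,1)}$, so $\omega(2\delta_2)\to 0$ as $\delta_2\to 0$. This handles the asymptotic function.

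Next, I would use the decomposition $g_{n,i}(z)=f_n(z)+\tfrac{1}{n}h_{n,i}(z)$. Since $t_e\in(\chi_c,b)$ is bounded away from $\chi_c$, if $\delta_2$ is small enough then for $z\in\mathcal{C}$ the distances $|z-(x_c^{(n)}+k/n)|$ appearing in the definition (\ref{hDerivaive})--type representation of $h_{n,i}$ are all bounded below by some $c>0$; since $|\Delta x_i^{(n)}|=O(n^{1/3})$, this yields $|\tfrac{1}{n}h_{n,i}(z)|=O(n^{-2/3}\log n)=o(1)$ uniformly on $\mathcal{C}$. For $f_n$, choose $\mathcal{C}$ so that its real endpoint avoids $\text{supp}(\nu_n)$ (this is possible because that support is a $1/n$-lattice). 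Split $\mathcal{C}=\mathcal{C}'\cup\mathcal{C}''$ where $\mathcal{C}'$ is the part with imaginary coordinate $\ge\eta_n$ for a suitable $\eta_n\downarrow 0$ slowly, and $\mathcal{C}''$ is the small tail near the real axis. On $\mathcal{C}'$, Lemma \ref{UniformConv} provides uniform convergence $f_n\to f$. On the tail $\mathcal{C}''$, the density of $\nu_n$ is uniformly bounded by Assumption \ref{A3}, and a standard estimate for logarithmic potentials of bounded-density measures gives equicontinuity of $U^{\nu_n}$ up to the real axis; weak convergence of $\nu_n$ to $\nu$ then promotes this to uniform convergence of $\text{Re}[f_n]=U^{\nu_n}$ to $\text{Re}[f]=U^{\nu}$ on $\mathcal{C}''$.

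Combining the two steps yields
\begin{equation*}
\text{Re}[g_{n,i}(z)-g_{n,i}(D_3)]\le \omega(2\delta_2)+o(1)
\end{equation*}
uniformly in $z\in\mathcal{C}$, so setting $b_1(\delta_2)=2\omega(2\delta_2)$ gives the claim for $n$ sufficiently large. The main obstacle will be the transfer on the tail $\mathcal{C}''$ near the real endpoint, where $f_n$ is not analytic and Lemma \ref{UniformConv} does not directly apply; this requires leaning on the logarithmic potential viewpoint and the uniform bound on the density of $\nu_n$ afforded by Assumption \ref{A3} near $t_e$.
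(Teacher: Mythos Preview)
Your strategy---prove the bound first for $f$ using continuity of $U^{\nu}$, then transfer to $g_{n,i}$ by uniform convergence---is different from the paper's, and the transfer step has a real gap near the real axis.

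The paper never passes through the limiting function. It parametrizes $\mathcal{C}$ as $z(t)=x_0+i(y_0-t)$, $0\le t\le y_0\le\delta_2$, and differentiates $\text{Re}\,f_n(z(t))=U^{\nu_n}(z(t))$ directly. Splitting $\nu_n$ into its positive and negative parts, the positive part contributes $\frac{d}{dt}\log|z(t)-a|\le 0$ for every real atom $a$, while the negative part is supported in $[t_1^{(n)},x_c^{(n)}]$ and hence at horizontal distance $\ge x_0-\chi_c>0$ from $\mathcal{C}$; its contribution to the $t$-derivative is therefore bounded by $C(y_0-t)\le C\delta_2$. Integrating gives $\text{Re}[f_n(z)-f_n(D_3)]\le C\delta_2$ for all $z\in\mathcal{C}$, uniformly in $n$, with no convergence argument needed. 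Your treatment of $\tfrac{1}{n}h_{n,i}$ matches the paper's.

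The gap in your argument is the claim that Assumption~\ref{A3} gives ``bounded density of $\nu_n$'' and hence ``equicontinuity of $U^{\nu_n}$ up to the real axis.'' The measures $\nu_n$ are purely atomic (point masses of weight $1/n$), so $U^{\nu_n}$ has a logarithmic singularity at every atom and is not equicontinuous on any neighbourhood meeting $\text{supp}(\nu_n)$. What one can show instead is that, because the atoms are $1/n$-separated, $U^{\nu_n}$ is within $O(n^{-1}\log n)$ of $U^{\nu}$ at points that stay at distance $\gtrsim 1/n$ from every atom---a Riemann-sum comparison, not a bounded-density equicontinuity lemma. Your remark about placing the foot of $\mathcal{C}$ off $\text{supp}(\nu_n)$ gestures at this, but the actual estimate needs to be carried out, and your splitting $\mathcal{C}=\mathcal{C}'\cup\mathcal{C}''$ with $\eta_n\downarrow 0$ must then be balanced against the fact that Lemma~\ref{UniformConv} only gives uniform convergence on sets at fixed positive distance from $\text{supp}(\nu)$. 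The paper's monotonicity computation avoids all of this.
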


\begin{proof}
Let $D_3=x_0+iy_0$ and set 
\begin{align*}
z(t)=x_0+i(y_0-t), \quad0\leq t \leq y_0\leq \delta_2.
\end{align*}

We have chosen $\delta_2$ so small that $x_0>\chi_c$. From (\ref{FuncNonAsymp1}) we see that 
\begin{align*}
\frac{d}{dt}\text{Re}[f_n(z(t))]&=\frac{d}{dt}\bigg(\frac{1}{n}\sum_{\beta_i^{(n)}\leq t_2^{(n)}}\log\vert z(t)-n^{-1}\beta_i^{(n)}\vert+\frac{1}{n}\sum_{\beta_i^{(n)}>t_1^{(n)}}\log\vert z(t)-n^{-1}\beta_i^{(n)}\vert\bigg)\\&-\frac{d}{dt}\frac{1}{n}\sum_{k=t_1^{(n)}+1}^{x_c^{(n)}}\log\vert z(t)-n^{-1}k\vert=u_1^{(n)}(t)+u_2^{(n)}(t). 
\end{align*}
Note that $u_1^{(n)}(t)\leq 0$ and 

\begin{align*}
u_2^{(n)}(t)&=-\frac{d}{dt}\frac{1}{2n}\sum_{k=t_1^{(n)}+1}^{x_c^{(n)}}\log\bigg(\bigg(x_0-\frac{k}{n}\bigg)^2-(y_0-t)^2\bigg)\\
&=\frac{1}{n}\sum_{k=t_1^{(n)}+1}^{x_c^{(n)}}\frac{y_0-t}{\big(x_0-\frac{k}{n}\big)^2+(y_0-t)^2}\leq \frac{\delta_2}{n}\sum_{k=t_1^{(n)}+1}^{x_c^{(n)}}\frac{1}{\big(x_0-\frac{k}{n}\big)^2}\leq C\delta_2,
\end{align*}
since $x_c^{(n)}/n\to\chi_c<x_0$ as $n\to \infty$. Thus, 
\begin{equation*}
\text{Re\,}[f_n(z)-f_n(D_3)]\le C\delta_2
\end{equation*}
for all $z\in\mathcal{C}$ if $n$ is sufficiently large. 
It remains to estimate  $\frac{1}{n}\text{Re}[h_{n,i}(z(t))]$. Consider $h_{n,1}$ and assume $\Delta x_1^{(n)}>0$, the other cases are similar. From
(\ref{FuncNonAsymp2}) and (\ref{FuncCont}) we obtain
\begin{align*}
\frac{1}{n}\bigg\vert\text{Re\,}[h_{n,1}(z(t))] \bigg\vert \leq \frac{\log n}{n}\Delta x_1^{(n)}+\frac 1{2n}\sum_{k=nx_c^{(n)}+1}^{nx_c^{(n)}+\Delta x_1^{(n)}}
\log\bigg\vert\bigg(x_0-\frac kn\bigg)^2+(y_0-t)^2\bigg\vert\le \frac{C\log n}{n^{2/3}}\le \delta_2
\end{align*}
if $n$ is sufficiently large. Here we used again the fact that $x_c^{(n)}/n\to\chi_c<x_0$. This proves the lemma.
\end{proof}

From Proposition \ref{IntKernel3} and (\ref{Conjugationfactor}) we see that

\begin{align}
\label{CIAsym}
&\frac{p_n(x_2,y_2)}{p_n(x_1,y_1)}\frac{c_0n^{1/3}}{2}\tilde{K}_{\mathcal{R}}^{(n)}((x_1,y_1),(x_2,y_2))\notag
\\&=\frac{1}{(2\pi i)^2}\oint_{\G_n^1+\G_n^2}dz \oint_{\g_n^1+\g_n^2}dw\frac{c_0n^{1/3}}{z-n^{-1}x_2}\frac{(d_0n^{2/3})^{r}q(nw;r)}{(d_0n^{2/3})^{s}q(nz;s)}
\frac{e^{n(g_{n,1}(w)-g_{n,1}(t_c))-n(g_{n,2}(z)-g_{n,2}(t_c))}}{w-z}.
\end{align}
Note also that
\begin{equation}\label{qnAsym}
(d_0n^{2/3})^{r}q_n(nw;r)=\prod_{k=0}^{r-1}\left(\frac{n^{1/3}(w-t_c)-k/n^{1/3}}{d_0}\right)
\end{equation}
if $r>0$; there is an analogous formula if $r<0$ and if $r=0$ the expression is $=1$.

By uniform convergence we have that $\vert g_{n,i}(z)-f(z)\vert\leq \frac{1}{4}b_0(\delta_1)$ for all $z\in \mathcal{D}'$ if $n$ is large enough.
Thus, if $z\in \mathcal{D}'$ we have 

\begin{align*}
\text{Re}[g_{n,i}(z)-g_{n,i}(D_2)]\leq \text{Re}[g_{n,i}(z)-f(z)]+ \text{Re}[f(D_2)-g_{n,i}(D_2)]+ \underbrace{\text{Re}[f(z)-f(D_2)]}_{\leq0}
\leq \frac{1}{2}b_0(\delta_1),
\end{align*}
where we have used that $\mathcal{D}'$ is a descent curve. Combining this with Lemma \ref{lemDisc2}, we obtain
\begin{align}\label{gniest}
\text{Re}[g_{n,i}(z)-g_{n,i}(t_c)]&\leq -\frac{1}{2}b_{0}(\delta_1).
\end{align}
for all $z\in \mathcal{D}'$. Given $\delta_1$ we choose $\delta_2$ so small that $b_1(\delta_2)\le b_0(\delta_1)/4$. Since $D_3\in\mathcal{D}'$ we find,
using Lemma \ref{lemDisc3} and (\ref{gniest}) that
\begin{align}\label{gniest2}
\text{Re}[g_{n,i}(z)-g_{n,i}(t_c)]&\leq -\frac{1}{4}b_{0}(\delta_1).
\end{align}
for all $z\in\mathcal{C}$ if $n$ is sufficiently large.

Note that
\begin{equation}\label{Infest}
\text{Re\,}g_{n,2}(z)=\nu_{n,2}(\R)\log|z|+O(|z|^{-1})
\end{equation}
as $|z|\to\infty$, and $\nu_{n,2}(\R)\to\nu(\R)>0$ as $n\to\infty$. In (\ref{CIAsym}) we can let $\G_n^2$ and $\g_n^2$ be small circles around $t_c$ inside
$B_2$ and deform $\g_n^1$ to the descent contour $\mathcal{D}'+\mathcal{C}$ (and its reflection image in the lower half plane).  Using (\ref{Infest}) we see that we can
deform $\G_n^1$ to the ascent contour $\mathcal{A}$. We can now use the estimates (\ref{gniest}), (\ref{gniest2}) and (\ref{Infest}) to see that in the integral
(\ref{CIAsym}) we can ignore $\mathcal{D}'+\mathcal{C}$ and the part of $\mathcal{A}$ outside $B_2$ in the limit. More precisely, we also have to
combine this estimate with the estimates and computations inside $B_2$ that we will do in the next section. We leave out the details.

\subsection{Local Analysis}\label{sec:local}

Let $\{M_n\}_{n\ge 1}$ be a sequence satisfying
\begin{equation}\label{MnAss}
M_n\to\infty,\quad \frac{M_n}{n^{1/12}}\to 0,\quad n^{2/3}M_n|f_n'(t_c)|\to 0
\end{equation}
as $n\to\infty$, which exists by Assumption \ref{A4}. Consider the ball $B_1=B(t_c,M_n n^{-1/3})$. Again we will only discuss the descent contour; the ascent case is analogous.
As above, we take $\zeta(t)=te^{i\theta(\delta_1)}$ in (\ref{Zedcurve}) with $M_n/n^{1/3}\le t\le \delta_1$, so that $z(\delta_1)=t_c+\zeta(\delta_1)=D_2$, 
see figure \ref{figLocalContourB2}.

\begin{figure}[H]

\centering
\begin{tikzpicture}[xscale=0.5,yscale=0.5]
\draw (-5,0) --(5,0);
\filldraw[white] (0,0) circle (1cm);
\draw (0,0) circle (1cm);
\draw (0,0) circle (4cm);
\filldraw (0,0) circle (0.4mm);
\filldraw (0,0.4) node {\small$t_c$};
\filldraw ({cos(55)},{sin(55)}) circle (0.4mm);
\filldraw ({cos(55)},{-sin(55)}) circle (0.4mm);
\filldraw ({cos(125)},{sin(125)}) circle (0.4mm);
\filldraw ({cos(235)},{sin(235)}) circle (0.4mm);
\filldraw ({4*cos(60)},{4*sin(60)}) circle (0.4mm);
\filldraw ({4*cos(60)},{-4*sin(60)}) circle (0.4mm);
\filldraw ({4*cos(120)},{4*sin(120)}) circle (0.4mm);
\filldraw ({4*cos(240)},{4*sin(240)}) circle (0.4mm);
\draw[thick] ({cos(55)},{sin(55)}) -- ({4*cos(60)},{4*sin(60)});
\draw[thick,->] ({1.97*cos(59)},{1.97*sin(59)}) -- ({2*cos(59)},{2*sin(59)});
\draw[thick]  ({cos(55)},{-sin(55)}) -- ({4*cos(60)},{-4*sin(60)});
\draw[thick]  ({cos(125)},{sin(125)}) -- ({4*cos(120)},{4*sin(120)}); 
\draw[thick,->]  ({1.97*cos(121)},{1.97*sin(121)}) -- ({2*cos(121)},{2*sin(121)}); 
\draw[thick]  ({cos(235)},{sin(235)}) --({4*cos(240)},{4*sin(240)});
\draw ({4.5*cos(30)},{-4.5*sin(30)}) node {$B_2$};
\draw ({1.5*cos(30)},{-1.5*sin(30)}) node {$B_1$};
\draw ({4.4*cos(60)+0.8},{4.4*sin(60)}) node {$z(\delta_1)=D_2$};

\end{tikzpicture}
\caption{\label{figLocalContourB2} Local contours in the region $B_2\backslash B_1$}
\end{figure}

It follows from (\ref{RealTaylorExp}),(\ref{RealPart}) and (\ref{GfuncDerivative3}) that there is a constant $C$ so that
\begin{align}
\label{RealPartGfuncEst}
\text{Re}[g_{n,i}(z(t))-g_{n,i}(t_c)]&\leq -t^3\bigg[\frac{1}{12}c_1-\frac{C\vert g_{n,i}'(t_c)\vert}{t^2}-\frac{C\vert g_{n,i}''(t_c)\vert}{t}-Ct\bigg].
\end{align} 
It follows from Assumption \ref{A4} that 
$n^{2/3}f_n'(t_c)\to 0$ and $n^{2/3}f_n''(t_c)\to 0$
as $n\to \infty$. Combining this with (\ref{hDerivaiveIneq}) we see that, for $t\in[M_n/n^{1/3},\delta_1]$,
\begin{align}
\label{gDerivativesConv}
\frac{\vert g_{n,i}'(t_c)\vert}{t^2}\leq C\frac{n^{2/3}}{M_n^{2}}(\vert f_n'(t_c)\vert+Cn^{-2/3})
\end{align}  
and
\begin{align}
\label{gDerivativesConv2}
\frac{\vert g_{n,i}''(t_c)\vert}{t}\leq C\frac{n^{2/3}}{M_n}(\vert f_n''(t_c)\vert+Cn^{-2/3}).
\end{align}  
Using (\ref{MnAss}) we see that the right hand sides of (\ref{gDerivativesConv}) and (\ref{gDerivativesConv2}) are $\le c_1/72$ if $n$ is large enough. Also, we can assume that
$\delta_1$ has been chosen so small that $Ct$ in (\ref{RealPartGfuncEst}) satisfies $Ct\le C\delta_1\le c_1/72$. It then follows from (\ref{RealPartGfuncEst}) that
\begin{align}
\label{RealPartgFunc1}
\text{Re\,}[g_{n,i}(z(t))-g_{n,i}(t_c)]\leq -\frac{1}{24}c_1t^3
\end{align}  
for $t\in [M_n/n^{1/3},\delta_1]$. If we let
\begin{equation*}
z(t)=t_c+d_0n^{-1/3}te^{i\theta(\delta_1)},
\end{equation*}
$t\in [M_n,\delta_1n^{1/3}]$, then (\ref{RealPartgFunc1}) gives the estimate
\begin{equation}\label{B2B1est}
n\text{Re\,}[g_{n,i}(t_c+d_0n^{-1/3}te^{i\theta(\delta_1)})-g_{n,i}(t_c)]\le -\frac{d_0^3c_1}{24}t^3.
\end{equation}
This estimate can be used together with the corresponding estimate for the ascent contour to control, in the limit, the contribution from the parts of the descent and ascent
contours that lie in $B_2\backslash B_1$. We find that we can neglect the contributions from $B_2\backslash B_1$.

We now define the local contours that will be used in $B_1$. Let $D_1=t_c+M_nn^{-1/3}e^{i\theta(\delta_1)}$. Fix $x>0$ and $y<0$. We define $\overline{\g}_n^1$ in the upper
half plane by
\begin{equation}\label{zt}
z(t)=t_c+d_0n^{-1/3}u_n(t):=t_c+d_0n^{-1/3}(x+te^{i\theta_n(\delta_1)}),
\end{equation}
where $0\le t\le M_n'$. Here $\theta_n(\delta_1)$ and $M_n'$ are such that $z(M_n')=D_1$. Note that $M_n'/M_n\to 1$ as $n\to\infty$. I the lower half plane
we just take the mirror image. We define $\overline{\G}_n^1$ analogously corresponding to the ascent contour, see figure \ref{figLocalContourB_1}. Also,
we define $\overline{\G}_n^2$ by $z(t)=t_c+d_0n^{-1/3}r_1e^{it}$, $0\le t\le 2\pi$ and $\overline{\g}_n^2$ by $z(t)=t_c+d_0n^{-1/3}r_2e^{-it}$, $0\le t\le 2\pi$,
where $0<r_2<r_1<\min(x,-y)$, see figure \ref{figLocalContourB_1}.

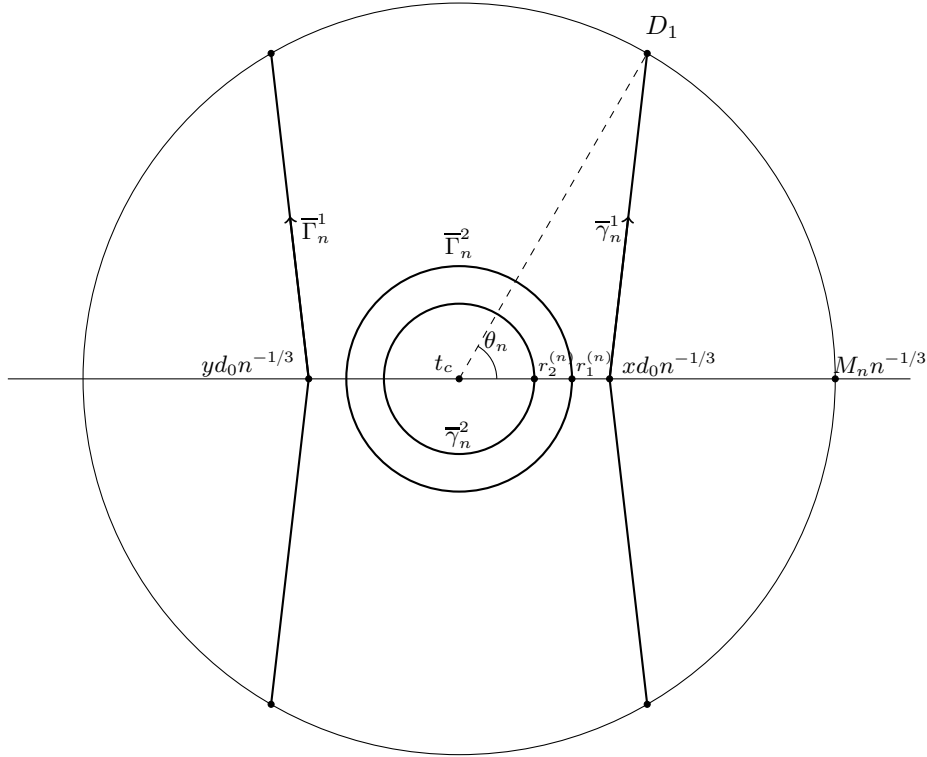
\begin{figure}[H]

\centering
\begin{tikzpicture}
\draw (-6,0) --(6,0);
\draw (0,0) circle (5cm);
\draw[thick] (0,0) circle (1cm);
\draw[thick] (0,0) circle (1.5cm);
\filldraw (0,0) circle (0.4mm);
\filldraw (-2,0) circle (0.4mm);
\filldraw (2,0) circle (0.4mm);
\filldraw (1,0) circle (0.4mm);
\filldraw (1.5,0) circle (0.4mm);
\filldraw (5,0) circle (0.4mm);
\filldraw ({5*cos(60)},{5*sin(60)}) circle (0.4mm);
\filldraw ({5*cos(120)},{5*sin(120)}) circle (0.4mm);
\filldraw ({5*cos(240)},{5*sin(240)}) circle (0.4mm);
\filldraw ({5*cos(300)},{5*sin(300)}) circle (0.4mm);
\draw[thick]  (-2,0) -- ({5*cos(120)},{5*sin(120)});
\draw[thick,->]  (2,0) -- ({1+2.5*cos(60)},{2.5*sin(60)});
\draw[thick,->]  (-2,0) -- ({-1-2.5*cos(60)},{2.5*sin(60)});
\draw[thick]  (-2,0) -- ({5*cos(240)},{5*sin(240)});
\draw[thick]  (2,0) -- ({5*cos(60)},{5*sin(60)});
\draw[thick]  (2,0) -- ({5*cos(300)},{5*sin(300)});
\draw (1.3,0.2) node {\scriptsize$r_2^{(n)}$};
\draw (1.8,0.2) node {\scriptsize$r_1^{(n)}$};
\draw (2.8,0.2) node {\small$xd_0n^{-1/3}$};
\draw (-2.8,0.2) node {\small$yd_0n^{-1/3}$};
\draw (5.6,0.2) node {\small$M_nn^{-1/3}$};
\draw (-0.2,0.2) node {\small$t_c$};
\draw (0,-0.75) node {\small$\overline{\g}_n^2$};
\draw (0,1.8) node {\small$\overline{\G}_n^2$};
\draw (2,2) node {\small$\overline{\g}_n^1$};
\draw (-1.9,2) node {\small$\overline{\G}_n^1$};
\draw[dashed] (0,0) -- ({5*cos(60)},{5*sin(60)});
\draw (0.5,0) arc (0:60:0.5cm);
\draw (0.5,0.5) node {\small$\theta_n$};
\draw ({5.4*cos(60)},{5.4*sin(60)}) node {$D_1$};
\end{tikzpicture}
\caption{\protect\label{figLocalContourB_1} Local contours in $B_1$. Here $r_i^{(n)}=n^{-1/3}r_i$ for $i=1,2$.}

\end{figure}

Write
\begin{align}
\label{Lkernel}
&\tilde{L}_{\mathcal{R}}^{(n)}((x_1,y_1),(x_2,y_2))\nonumber\\
&=\frac{1}{(2\pi i)^2}\oint_{-\overline{\G}_n^1-\overline{\G}_n^2}dz \oint_{\g_n^1+\g_n^2}dw\frac{c_0n^{1/3}}{z-n^{-1}x_2}\frac{(d_0n^{2/3})^{r}q(nw;r)}{(d_0n^{2/3})^{s}q(nz;s)}
\frac{e^{n(g_{n,1}(w)-g_{n,1}(t_c))-n(g_{n,2}(z)-g_{n,2}(t_c))}}{w-z}.
\end{align}
From (\ref{CIAsym}) and the discussion above it follows that
\begin{equation*}
\lim_{n\to\infty}\frac{p_n(x_2,x_1)}{p_n(x_1,y_1)}\frac{c_0n^{1/3}}{2}\tilde{L}_{\mathcal{R}}^{(n)}((x_1,y_1),(x_2,y_2))
=\lim_{n\to\infty}\tilde{K}_{\mathcal{R}}^{(n)}((x_1,y_1),(x_2,y_2)).
\end{equation*}

Consider the contribution from $\overline{\g}_n^1$. Write
\begin{align*}
n(g_{n,1}(w)-g_{n,1}(t_c))=n(f_n(w)-f_n(t_c))+h_{n,1}(w)-h_{n,1}(t_c).
\end{align*}

In analogy with (\ref{LocalGfunc}) we have, with $z(t)$ given by ({\ref{zt}),
\begin{align}
\label{FTaylorExpand}
n(f_{n}(z(t))-f(t_c))=n^{2/3}f_n'(t_c)d_0u_n(t)+\frac{1}{2}n^{1/3}f_n''(t_c)d_0^2u_n(t)^2+\frac{1}{6}f_n'''(t_c)d_0^3u_n(t)^3+n^{-1/3}\tilde{r}_n(z(t))d_0^4u_n(t)^4,
\end{align}
where $\vert \tilde{r}_n(z(t))\vert\leq C$ for $0\le t\le M_n'$. Since $\vert u_n(t)\vert\leq CM_n$,
\begin{align}
\label{RfuncEst}
\vert n^{-1/3}\tilde{r}_n(z(t))d_0^4u_n(t)^4\vert \le Cn^{-1/3}M_n^4\to 0,
\end{align}
as $n\to \infty$ for $0\leq t\leq M_n,$ by (\ref{MnAss}). Also,
\begin{align}
\label{FderivativeTaylorExpand}
&\vert n^{2/3}f_n'(t_c)d_0u_n(t)\vert \leq CM_nn^{2/3}\vert f_n'(t_c)\vert\\
&\vert n^{1/3}f_n''(t_c)d_0^2u_n(t)^2\vert \leq CM_n^2n^{1/3}\vert f_n''(t_c)\vert.\nonumber
\end{align}
By (\ref{MnAss}) and Assumption \ref{A4} it follows that the right hand sides of (\ref{FderivativeTaylorExpand}) go to $0$ as $n\to\infty$. Furthermore it follows
from Assumption \ref{A4} that $n^{2/3}\vert f_n'''(t_c)-f'''(t_c) \vert\to 0$ as $n\to \infty$. Hence, we have shown that
\begin{align}
\label{fConv2}
n(f_n(t_c)-f(t_c))=\frac{d_0^3}{6}f'''(t_c)u_n(t)^3+o(1)=\frac 13u_n(t)^3+o(1)
\end{align}
uniformly for $t\in[0,M_n']$ as $n\to \infty$, by our choice (\ref{DefD0}).
From the definition of $\theta_n(\delta_1)$ it follows that $n^{1/3}\vert \theta_n(\delta_1)-\theta(\delta_1)\vert$ is bounded, and thus $M_n^{3}\vert \theta_n(\delta_1)-\theta(\delta_1)\vert\to 0$ as $n\to \infty$ by (\ref{MnAss}). From this we see that
\begin{align}
\label{fConv3}
n(f_n(z(t))-f(t_c))=\frac{1}{3}(x+te^{i\theta(\delta_1)})^3+o(1),
\end{align}
uniformly for $t\in[0,M_n']$ as $n\to \infty$. From (\ref{hTaylorExp}) we obtain
\begin{align}
\label{hConvExp}
h_{n,1}(z(t))-h_{n,1}(t_c)=h_{n,1}'(t_c)d_0n^{-1/3}u_n(t)+s_{n,1}(z(t))d_0^2n^{-2/3}u_n(t)^2,
\end{align}
and by (\ref{hError}),
\begin{align}
\label{hErrorEst}
\vert n^{-2/3}d_0^2s_{n,1}(z(t))u_n(t)^2\vert\leq Cn^{-1/3}M_n^2\to 0,
\end{align}
uniformly for $0\leq t\leq M_n'$ as $n\to \infty$ by (\ref{MnAss}). Recall (\ref{hDerivaive}) and consider the case when $\Delta x_1^{(n)}>0$. From (\ref{hDerivaive}) we see that
\begin{align*}
\left| n^{-1/3}h_{n,1}'(t_c)+\frac{\Delta x_1^{(n)}}{(t_c-\chi_c)n^{1/3}}\right|&\le \frac{1}{n^{1/3}}\left|\sum_{k=1}^{\Delta x_1^{(n)}}-\frac{1}{t_c-\frac{x_c^{(n)}}{n}-\frac kn}+
\frac{1}{t_c-\chi_c}\right| \\
&\le\frac{C}{n^{1/3}}\sum_{k=1}^{\Delta x_1^{(n)}}\left|\frac{\frac kn+\frac{x_c^{(n)}}{n}-\chi_c}{t_c-\frac{t_c-x_c^{(n)}}{n}-\frac kn}\right|\le\frac C{n^{2/3}}
\end{align*}
since $|\Delta x_1^{(n)}|\le |r/2-c_0n^{1/3}\xi_n|\le Cn^{1/3}$, $|x_c^{(n)}/n-\chi_c|\le 1/n$, $\xi_n\to\xi$ as $n\to\infty$ and $\chi_c>t_c$. It follows that
\begin{align}
\label{hFuncConv}
n^{-1/3}h_{n,1}'(t_c)-\frac{c_0\xi}{2(t_c-\chi_c)} \to 0
\end{align}
as $n\to\infty$.
Using this and the control on $\theta_n(\delta_1)$, it follows from (\ref{hConvExp}), (\ref{hErrorEst}) and (\ref{hFuncConv}) that
\begin{align}
\label{hDifference}
h_{n,1}(z(t))-h_{n,1}(t_c)=-\frac{c_0d_0\xi}{2(t_c-\chi_c)}(x+te^{i\theta(\delta_1)})+o(1),
\end{align}
uniformly for $0\le t\le M_n$ as $n\to \infty$. We have chosen $c_0$, (\ref{DefC0}), so that
\begin{align}
\label{c0Constant}
\frac{c_0d_0}{2(t_c-\chi_c)}=-1.
\end{align}
Thus
\begin{align}
h_{n,1}(z(t))-h_{n,1}(t_c)=-\xi(x+te^{i\theta(\delta_1)})+o(1),
\end{align}
uniformly for $0\le t\le M_n$ as $n\to \infty$. Now, by (\ref{qnAsym}), for $r>0$,
\begin{align*}
(d_0n^{2/3})^rq_n(nz(t);r)=\prod_{k=0}^{r-1}\frac{n^{1/3}(d_0n^{-1/3}(x+te^{i\theta_n(\delta_1)})+n^{1/3}(t_c-t_c^{(n)})-k/n^{2/3}}{d_0}
=\left(x+te^{i\theta(\delta_1)}\right)^re^{o(1)} 
\end{align*}
uniformly for $0\le t\le M_n$ as $n\to \infty$.

We can do the same type of computations for $\overline{\G}_n^1$, $\overline{\G}_n^2$ and $\overline{\g}_n^2$. This gives us
\begin{align}
\label{LKernelConv}
&\lim_{n\to \infty}\frac{c_0d_0}{2(t_c-\chi_c)(2\pi i)^2}\tilde{L}_{\mathcal{R}}^{(n)}((x_1,y_1),(x_2,y_2))\nonumber\\
&=\frac{c_0d_0}{2(t_c-\chi_c)(2\pi i)^2}\int_{-\mathscr{L}_L-\mathscr{C}_{out}}dz\int_{\mathscr{L}_R+\mathscr{C}_{in}}dw\frac{1}{w-z}\frac{w^r}{z^s}e^{\frac{1}{3}w^3-\frac{1}{3}z^3-\xi w+\tau z}\nonumber\\
&=\frac{1}{(2\pi i)^2}\int_{\mathscr{L}_L+\mathscr{C}_{out}}dz\int_{\mathscr{L}_R+\mathscr{C}_{in}}dw\frac{1}{w-z}\frac{w^r}{z^s}e^{\frac{1}{3}w^3-\frac{1}{3}z^3-\xi w+\tau z}.
\end{align}

It remains to consider the asymptotics of $B_n$ in (\ref{ChangedContourKernel2}).
\begin{align}
\label{BKernel2}
&\lim_{n\to\infty}\frac{p_n(x_2,y_2)}{p_n(x_1,y_1)}\frac{c_0n^{1/3}}{2}B_n((x_1,y_1),(x_2,y_2))\nonumber\\
&=\lim_{n\to\infty}1_{x_1\geq x_2}\frac{1}{2\pi i}\int_{-\overline{\G}_n^2}dz\frac{c_0n^{1/3}}{z-x_2/n}\frac{(d_0n^{2/3})^rq_n(nz;r)}{(d_0n^{2/3})^sq_n(nz;s)}
e^{h_{n,1}(z)-h_{n,1}(t_c)-(h_{n,2}(z)-h_{n,2}(t_c)}
\end{align} 
since $\overline{\G}_n^2$ has the opposite orientation to $\G_n^2$.
Using the same asymptotics for $q_n$ and $h_n$ as above we get
\begin{equation}
\label{BKernelLimit}
-1_{-\xi\geq-\tau}\frac{1}{2\pi i}\int_{-\mathscr{C}_{out}}z^{r-s}e^{(\tau-\xi)z}\,dz\nonumber =1_{\tau\geq\xi}1_{s>r}\frac{(\tau-\xi)^{s-r-1}}{(s-r-1)!}.
\end{equation} 
This gives us finally the complete \emph{Cusp-Airy} kernel

\begin{align}
\label{CAKernel1}
\mathcal{K}_{CA}((\xi,r),(\tau,s))=-1_{\tau\geq \xi}1_{s>r}\frac{(\tau-\xi)^{s-r-1}}{(s-r-1)!}+\frac{1}{(2\pi i)^2}\int_{\mathscr{L}_L+\mathscr{C}_{out}}dz\int_{\mathscr{L}_R+\mathscr{C}_{in}}dw\frac{1}{w-z}\frac{w^r}{z^s}e^{\frac{1}{3}w^3-\frac{1}{3}z^3-\xi w+\tau z}.
\end{align} 

It is not difficult to check that the asymptotics above is uniform for $\xi,\tau$ in compact subsets of $\R$. A standard argument using the Hadamard inequality then proves 
(\ref{Weaklimit}). This completes the proof of the Main Theorem.

\section{Representations and Properties of The Cusp-Airy Kernel}

\subsection{Symmetry Property of The Cusp-Airy Kernel}\label{sec:symmetry}

From the geometry of the problem we expect that the Cusp-Airy process should be symmetric around the line $r=0$.
This is indeed seen in the kernel as the next proposition shows.

\begin{Prop}
\label{Reflection}
The Cusp-Airy kernel satisfies 
\begin{align} 
\mathcal{K}_{CA}((\xi,-r),(\tau,-s))=(-1)^{s-r}\mathcal{K}_{CA}((\tau,s),(\xi,r)).
\end{align}
In particular, this implies that the correlation functions satisfies the reflection symmetry
\begin{align} 
\rho_n((\xi_1,-r_1),(\xi_2,-r_2),...,(\xi_n,-r_n))=\rho_n((\xi_1,r_1),(\xi_2,r_2),...,(\xi_n,r_n))
\end{align}
for all $n$.
\end{Prop}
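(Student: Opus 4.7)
The plan is to make the substitution $w=-\tilde z$, $z=-\tilde w$ inside the double contour integral appearing in the definition of $\mathcal{K}_{CA}((\xi,-r),(\tau,-s))$, show that the integrand becomes $(-1)^{s-r}$ times the integrand of $\mathcal{K}_{CA}((\tau,s),(\xi,r))$, and then deform the resulting contours back to the standard ones of Definition \ref{CAkerneldefinition}; the residues picked up from the simple pole $w=z$ during this deformation will exactly match the discrepancy between the two indicator prefactors.

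For the integrand, one checks directly that
\[
\frac{w^{-r}}{z^{-s}}=(-1)^{s-r}\frac{\tilde w^{s}}{\tilde z^{r}},\quad\frac{w^3-z^3}{3}=\frac{\tilde w^3-\tilde z^3}{3},\quad -\xi w+\tau z=\xi\tilde z-\tau\tilde w,\quad\frac{1}{w-z}=\frac{1}{\tilde w-\tilde z},
\]
so the new integrand is exactly $(-1)^{s-r}$ times the one in $\mathcal{K}_{CA}((\tau,s),(\xi,r))$. The delicate part is the contours. The reflection $w\mapsto-\tilde z$ sends $\mathscr{L}_R$ (rays at $\pm\pi/3$ from $x>0$) to the oriented reverse of $\mathscr{L}_L$, but this reversal is cancelled by the Jacobian sign from $dw=-d\tilde z$, giving $\int_{\mathscr{L}_R}^{(w)}=\int_{\mathscr{L}_L}^{(\tilde z)}$; analogously $\int_{\mathscr{L}_L}^{(z)}=\int_{\mathscr{L}_R}^{(\tilde w)}$. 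On the concentric circles, however, the reflection preserves orientation (it is a rotation by $\pi$), so the Jacobian sign survives and $\mathscr{C}_{in},\mathscr{C}_{out}$ are effectively replaced by their orientation reversals.

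After renaming $\tilde w\leftrightarrow w,\tilde z\leftrightarrow z$, the integral in $\mathcal{K}_{CA}((\xi,-r),(\tau,-s))$ equals $(-1)^{s-r}/(2\pi i)^2$ times the integral of the integrand of $\mathcal{K}_{CA}((\tau,s),(\xi,r))$ but over the contours $\mathscr{L}_L-\mathscr{C}_{in}$ in $z$ and $\mathscr{L}_R-\mathscr{C}_{out}$ in $w$, rather than the target $\mathscr{L}_L+\mathscr{C}_{out}$ in $z$ and $\mathscr{L}_R+\mathscr{C}_{in}$ in $w$. Expanding the product of cycles and subtracting the target, the two mixed $\mathscr{L}\cdot(\mathscr{C}_{in}+\mathscr{C}_{out})$ terms vanish because the cycle $\mathscr{C}_{in}+\mathscr{C}_{out}$ is null-homologous in the annulus between them and the integrand is holomorphic there (the ray variable sits at distance $\geq 2$ from the origin, hence far from both $0$ and the diagonal $w=z$). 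The only surviving discrepancy is the doubly circular term, where swapping which of $\mathscr{C}_{in}$ and $\mathscr{C}_{out}$ is inside crosses the simple pole $w=z$ once. A direct computation gives
\[
\mathrm{Res}_{w=z}\Bigl[\tfrac{w^s}{z^r}\tfrac{1}{w-z}e^{(w^3-z^3)/3-\tau w+\xi z}\Bigr]=z^{s-r}e^{(\xi-\tau)z},
\]
after which a residue at $z=0$ (nonzero only when $r>s$) yields $1_{r>s}(\tau-\xi)^{r-s-1}/(r-s-1)!$.

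The final step is to check that this residue is precisely the difference between the LHS indicator $-1_{\tau\ge\xi}1_{r>s}\frac{(\tau-\xi)^{r-s-1}}{(r-s-1)!}$ and the prefactored RHS indicator $-(-1)^{s-r}1_{\xi\ge\tau}1_{r>s}\frac{(\xi-\tau)^{r-s-1}}{(r-s-1)!}$; this uses $(-1)^{s-r}(\xi-\tau)^{r-s-1}=-(\tau-\xi)^{r-s-1}$ together with $1_{\tau\ge\xi}+1_{\xi\ge\tau}=1$ off the measure-zero set $\tau=\xi$. The main obstacle will be the careful bookkeeping of signs and orientations under the reflection and verifying that the mixed $\mathscr{L}\cdot\mathscr{C}$ terms really cancel; once this is in place the pole contribution appears for free. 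The corollary on correlation functions then follows by writing $K_{ij}:=\mathcal{K}_{CA}((\xi_i,-r_i),(\xi_j,-r_j))=(-1)^{r_i-r_j}K'_{ji}$ with $K'_{ij}:=\mathcal{K}_{CA}((\xi_i,r_i),(\xi_j,r_j))$, so that $K=D(K')^{\top}D^{-1}$ with $D=\mathrm{diag}((-1)^{r_1},\ldots,(-1)^{r_n})$, and hence $\det K=\det K'$, which is the claimed equality.
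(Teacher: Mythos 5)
Your plan is correct and is essentially the paper's own proof: the paper performs the same substitution $z\to -z$, $w\to -w$ (then relabels $z\leftrightarrow w$), extracts the factor $(-1)^{s-r}$, and reconciles the circular contours with the standard ones via the residue at $w=z$ followed by a residue at the origin, which exactly absorbs the mismatch between the indicator terms; your cycle-decomposition argument for why the ray--circle cross terms drop out and your conjugation argument $K=D(K')^{\top}D^{-1}$ for the correlation functions are just explicit versions of what the paper does or leaves implicit. The one point needing care is the sign of the quoted contribution $1_{r>s}(\tau-\xi)^{r-s-1}/(r-s-1)!$, which depends on the clockwise orientation of $\mathscr{C}_{in}$, the prefactor $(-1)^{s-r}$, and which side of the identity it is placed on; this is precisely the orientation bookkeeping you defer, and it does close as you expect.
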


\begin{proof}
First note that under the change of variables $z\to -z$ and $w\to -w$ the contours transform according to:
$\mathscr{L}_L\to -\mathscr{L}_R$,
$\mathscr{L}_R\to -\mathscr{L}_L$,   
$\mathscr{C}_{in}\to \mathscr{C}_{in}$, and
$\mathscr{C}_{out}\to \mathscr{C}_{out}$.
Thus we see that
\begin{align*} 
&\mathcal{K}_{CA}((\xi,-r),(\tau,-s))=-1_{\tau\geq \xi}1_{-s>-r}\frac{(\tau-\xi)^{-s+r-1}}{(-s+r-1)!}\\&+\frac{1}{(2\pi i)^2}\int_{\mathscr{L}_L\cup\mathscr{C}_{out}}dz\int_{\mathscr{L}_R\cup\mathscr{C}_{in}}dw\frac{1}{w-z}\frac{w^{-r}}{z^{-s}}e^{\frac{1}{3}w^3-\frac{1}{3}z^3-\xi w+\tau z}=\{z\to-z,w\to-w\}\\
&=1_{\tau\geq \xi}1_{r>s}(-1)^{r-s}\frac{(\xi-\tau)^{r-s-1}}{(r-s-1)!}+\frac{(-1)^{s-r}}{(2\pi i)^2}\int_{-\mathscr{L}_R\cup\mathscr{C}_{out}}dz\int_{-\mathscr{L}_L\cup\mathscr{C}_{in}}dw\frac{1}{z-w}\frac{z^s}{w^{r}}e^{\frac{1}{3}z^3-\frac{1}{3}w^3-\tau z+\xi w}=\{z\leftrightarrow w\}\\
&=1_{\tau\geq \xi}1_{r>s}(-1)^{r-s}\frac{(\xi-\tau)^{r-s-1}}{(r-s-1)!}+\frac{(-1)^{s-r}}{(2\pi i)^2}\int_{-\mathscr{L}_L\cup\mathscr{C}_{in}}dz\int_{-\mathscr{L}_R\cup\mathscr{C}_{out}}dw\frac{1}{w-z}\frac{w^s}{z^{r}}e^{\frac{1}{3}w^3-\frac{1}{3}z^3-\tau w+\xi z}.
\end{align*}
Let $\mathscr{C}$ be a negatively oriented circular contour around the origin which is contained inside $\mathscr{C}_{in}$. The residue theorem implies that 
\begin{align*}
&\frac{1}{(2\pi i)^2}\int_{-\mathscr{L}_L\cup\mathscr{C}_{in}}dz\int_{-\mathscr{L}_R\cup\mathscr{C}_{out}}dw\frac{1}{w-z}\frac{w^s}{z^{r}}e^{\frac{1}{3}w^3-\frac{1}{3}z^3-\tau w+\xi z}
\\
&=\frac{1}{2\pi i}\int_{\mathscr{C}_{in}}dzz^{s-r}e^{(\xi-\tau)z}+\frac{1}{(2\pi i)^2}\int_{-\mathscr{L}_L+\mathscr{C}_{in}}dz\int_{-\mathscr{L}_R-\mathscr{C}}dw\frac{1}{w-z}\frac{w^s}{z^{r}}e^{\frac{1}{3}w^3-\frac{1}{3}z^3-\tau w+\xi z}
\\
&=-1_{r>s}\frac{(\xi-\tau)^{r-s-1}}{(r-s-1)!}+\frac{1}{(2\pi i)^2}\int_{-\mathscr{L}_L+\mathscr{C}_{in}}dz\int_{-\mathscr{L}_R-\mathscr{C}}dw\frac{1}{w-z}\frac{w^s}{z^{r}}e^{\frac{1}{3}w^3-\frac{1}{3}z^3-\tau w+\xi z}.
\end{align*} 
This gives
\begin{align*} 
&\mathcal{K}_{CA}((\xi,-r),(\tau,-s))=1_{\tau\geq \xi}1_{r>s}(-1)^{s-r}\frac{(\xi-\tau)^{r-s-1}}{(r-s-1)!}-(-1)^{s-r}1_{r>s}\frac{(\xi-\tau)^{r-s-1}}{(r-s-1)!}\\
&+\frac{(-1)^{r-s}}{(2\pi i)^2}\int_{-\mathscr{L}_L+\mathscr{C}_{in}}dz\int_{-\mathscr{L}_R-\mathscr{C}_{in}}dw\frac{1}{w-z}\frac{w^s}{z^{r}}e^{\frac{1}{3}w^3-\frac{1}{3}z^3-\tau w+\xi z},
\end{align*}
We can now change $\mathscr{C}\to \mathscr{C}_{out}$ and $\mathscr{C}_{in}\to -\mathscr{C}_{out}$. This finally gives
\begin{align*} 
&\mathcal{K}_{CA}((\xi,-r),(\tau,-s))\\
&=(-1)^{s-r}1_{r>s}\frac{(\xi-\tau)^{r-s-1}}{(r-s-1)!}\big[1_{\tau\geq \xi}-1\big]
+\frac{(-1)^{s-r}}{(2\pi i)^2}\int_{\mathscr{L}_L\cup\mathscr{C}_{out}}dz\int_{\mathscr{L}_R\cup\mathscr{C}_{in}}dw\frac{1}{w-z}\frac{w^s}{z^{r}}e^{\frac{1}{3}w^3-\frac{1}{3}z^3-\tau w+\xi z}\\
&=-(-1)^{s-r}1_{\xi>\tau}1_{r>s}\frac{(\xi-\tau)^{r-s-1}}{(r-s-1)!}
+\frac{(-1)^{s-r}}{(2\pi i)^2}\int_{\mathscr{L}_L\cup\mathscr{C}_{out}}dz\int_{\mathscr{L}_R\cup\mathscr{C}_{in}}dw\frac{1}{w-z}\frac{w^s}{z^{r}}e^{\frac{1}{3}w^3-\frac{1}{3}z^3-\tau w+\xi z}\\
&=(-1)^{s-r}\mathcal{K}_{CA}((\tau,s)(\xi,r)).
\end{align*}
\end{proof}

\subsection{Representation of The Cusp-Airy Kernel}\label{sec:rAiry}
In this section we will derive an alternative representation of the Cusp-Airy kernel involving the so called r-Airy integrals and certain polynomials. 
Define the \emph{r-Airy integrals}, 
\begin{equation*}
A^{\pm}_r(u)=\frac 1{2\pi}\int_{\ell} e^{\frac 13 ia^3+iua}(\mp ia)^{\pm r}\,da,
\end{equation*}
where $r\ge 0$ and $\ell$ is a contour from $\infty e^{5\pi i/6}$ to $\infty e^{\pi i/6}$ such that $0$ lies above the contour, see \cite{ADvM}; compare also with the functions
$s^{(m)}$ and $t^{(m)}$ in \cite{Peche}. Note that $A_0^{\pm}(u)=\text{Ai\,}(u)$, the standard Airy function.
Let $\mathscr{L}_L^*$ be the contour $\mathscr{L}_L$ shifted to the right so that $0$ lies to the left of it, see figure 
\ref{figContourSpecial2}; define $\mathscr{L}_R^*$ analogously by shifting $\mathscr{L}_R$ to the left so that $0$ is to the right of it. It is straightforward
to check that
\begin{equation}\label{rAiryp}
\frac{1}{2\pi i}\int_{\mathscr{L}_R^*} e^{\frac 13 w^3-uw}w^r\,dw=\frac{(-1)^r}{2\pi i}\int_{\mathscr{L}_L^*} e^{-\frac 13 z^3+uz}z^r\,dz=A_r^+(u),
\end{equation}
and
\begin{equation}\label{rAirym}
\frac{(-1)^r}{2\pi i}\int_{\mathscr{L}_R^*} e^{\frac 13 w^3-uw}\frac 1{w^r}\,dw=\frac{1}{2\pi i}\int_{\mathscr{L}_L^*} e^{-\frac 13 z^3+uz}\frac{1}{z^r}\,dz=A_r^-(u),
\end{equation}
for $r\ge 0$.

Define the polynomials $P_n(w,\xi)$ and $p_n(\xi)$ through
\begin{align} 
P_n(w,\xi):=e^{-\frac{1}{3}w^3+uw}\frac{d^n}{d w^n}e^{\frac{1}{3}w^3-uw}
\end{align}
and 
\begin{align} 
p_n(u):=P_n(0,u).
\end{align}

By Cauchy's integral formula, we have for $r\ge 0$,
\begin{equation}\label{ppoly}
\frac{1}{2\pi i}\int_{\mathscr{C}_{out}} e^{\frac 13 w^3-uw}w^r\,dw=\frac{(-1)^{r-1}}{2\pi i}\int_{\mathscr{C}_{out}} e^{-\frac 13 z^3+uz}z^r\,dz=p_{r-1}(u).
\end{equation}
Note that $\mathscr{L}_L+\mathscr{C}_{out}=\mathscr{L}_L^*$, see figure \ref{figContourSpecial}. Thus,
\begin{equation}\label{LRint}
\frac{(-1)^r}{2\pi i}\int_{\mathscr{L}_R} e^{\frac 13 w^3-uw}\frac 1{w^r}\,dw=\frac{1}{2\pi i}\int_{\mathscr{L}_L} e^{-\frac 13 z^3+uz}\frac{1}{z^r}\,dz=A_r^-(u)+(-1)^rp_{r-1}(u).
\end{equation}

\begin{figure}[H]

\centering
\begin{tikzpicture}

\draw (-8.5,0) -- (8,0);
\draw[thick] (-2,0) --(-1,0);
\draw[->] (-1.75,-0.1) --(-1.25,-0.1);
\draw[->] (-1.25,0.1) --(-1.75,0.1);
\draw (0,-0.1) -- (0,0.1);
\draw[thick] (2,0) --({2+4/2},{4*sqrt(3)/2}) ;
\draw[->,thick] (3,{2*sqrt(3)/2}) --({3+0.02},{(1.02)*sqrt(3)}) ;
\draw[thick] (2,0) --({2+4/2},{-4*sqrt(3)/2}) ;
\draw [thick](-2,0) --({-2-4/2},{4*sqrt(3)/2}) ; 
\draw[->,thick] (-3,{2*sqrt(3)/2}) --(-3.02,{(1.02)*sqrt(3)}) ;
\draw[thick] (-2,0) --({-2-4/2},{-4*sqrt(3)/2}) ;
\draw[thick] (0,0) circle (1.0cm);
\draw[->,thick] (0,1.0) -- (0.02,1.0);
\draw (-2.8,2.1) node {$\mathscr{L}_L$}; 
\draw (2.6,2.1) node {$\mathscr{L}_R$};
\draw (0,1.4) node {$\mathscr{C}_{out}$}; 
\draw (0.2,0.2) node {\small$0$}; 
\draw (2.0,-0.1) -- (2.0,0.1);
\draw (-2.0,-0.1) -- (-2.0,0.1);
\draw (1.9,0.4) node {\small$x$}; 
\draw (-1.9,0.4) node {\small$y$}; 
\draw (2.7,0) arc (0:58:0.7);
\draw (2.8,0.6) node {$\frac{\pi}{3}$};
\draw ({-2-0.38},{0.7*sqrt(3)/2}) arc (120:180:0.7);
\draw (-2.8,0.6) node {$\frac{\pi}{3}$};
\end{tikzpicture}
\caption{\protect\label{figContourSpecial}Deformation of the contours $\mathscr{L}_L$ and $\mathscr{C}_{out}$ so that $\mathscr{L}_L$ can be moved to the right of 0.}

\end{figure}
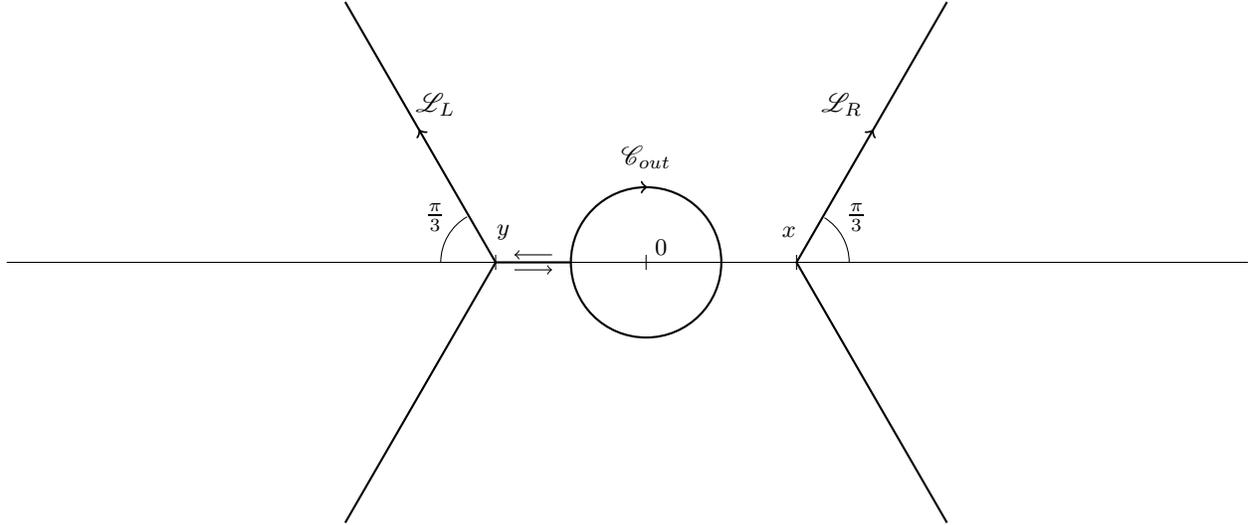

\begin{figure}[H]

\centering
\begin{tikzpicture}
\draw (-5,0) -- (5,0);
\filldraw (-2,0) circle (0.05cm);
\draw[thick] (0,0) -- (-2,{2*sqrt(3)});
\draw[thick,->] (0,0) -- (-1,{1*sqrt(3)});
\draw[thick] (0,0) -- (-2,-{2*sqrt(3)});
\draw[thick] (2,0) -- (4,{2*sqrt(3)});
\draw[thick,->] (2,0) -- (3,{1*sqrt(3)});
\draw[thick] (2,0) -- (4,-{2*sqrt(3)});
\draw (-2,0.4) node {$0$};
\draw (-1,1) node {$\mathscr{L}_L^*$}; 
\draw (3,1) node {$\mathscr{L}_R$};
\draw (-1,-1) node {$z$}; 
\draw (3,-1) node {$w$};
\end{tikzpicture}
\caption{\protect\label{figContourSpecial2}Integration contours moved to the right of 0.}

\end{figure}
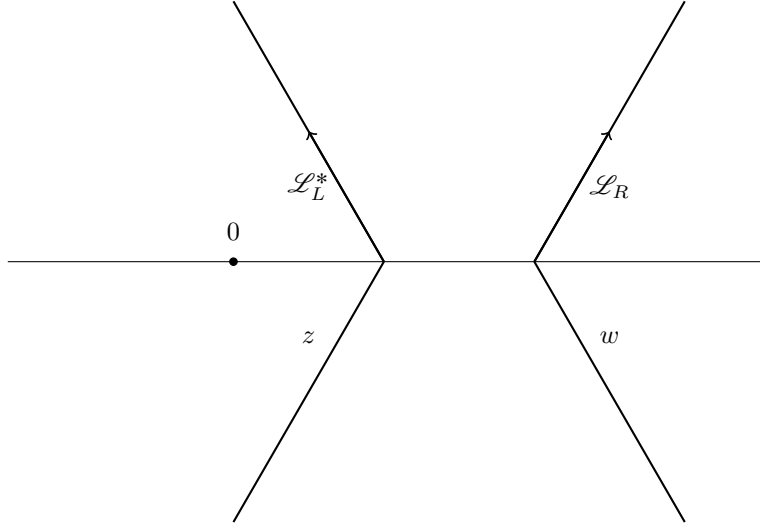

We can now give a different formula for the Cusp-Airy kernel in terms of the r-Airy integrals.

\begin{Prop}\label{rAirycorrelationformula}
The Cusp-Airy kernel can be written as
\begin{equation*}
\mathcal{K}_{CA}((\xi,r),(\tau,s))=-1_{\tau\ge\xi}1_{s>r}\frac{(\tau-\xi)^{s-r-1}}{(s-r-1)!}+\tilde{\mathcal{K}}_{CA}((\xi,r),(\tau,s)),
\end{equation*}
where $\tilde{\mathcal{K}}_{CA}$ is given 

\begin{itemize}
\item[(i)] for $r,s\ge 0$, by
\begin{equation*}
\tilde{\mathcal{K}}_{CA}((\xi,r),(\tau,s))=\int_0^{\infty}A_{s}^-(\tau+\lambda)A_r^+(\xi+\lambda)\,d\lambda,
\end{equation*}
\item[(ii)] for $r\ge 0$, $s<0$, by
\begin{equation*}
\tilde{\mathcal{K}}_{CA}((\xi,r),(\tau,s))=(-1)^s\int_0^{\infty}A_{-s}^+(\tau+\lambda)A_r^+(\xi+\lambda)\,d\lambda,
\end{equation*}
\item[(iii)] for $r<0$, $s\ge 0$, by
\begin{align*}
\tilde{\mathcal{K}}_{CA}((\xi,r),(\tau,s))&=(-1)^r\int_0^{\infty}A_s^-(\tau+\lambda)A_{-r}^-(\xi+\lambda)\,d\lambda
+(-1)^{s-r}\int_0^{\infty}p_{s-1}(\tau+\lambda)A_{-r}^-(\xi+\lambda)\,d\lambda\\
&+(-1)^{s-r}\sum_{k=0}^{s-1}p_k(\tau)A_{-r+s-k}(\xi)+\sum_{k=0}^{s-r-1}(-1)^kp_k(\tau)p_{s-r-1-k}(\xi),
\end{align*}
\item[(iv)] and for $r,s< 0$, by
\begin{equation*}
\tilde{\mathcal{K}}_{CA}((\xi,r),(\tau,s))=(-1)^{s-r}\int_0^{\infty}A_{-s}^+(\tau+\lambda)A_{-r}^-(\xi+\lambda)\,d\lambda.
\end{equation*}
\end{itemize}

\end{Prop}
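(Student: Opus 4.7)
The core idea is to decouple the $w$- and $z$-integrals in $\tilde{\mathcal{K}}_{CA}$ through the identity
\[
\frac{1}{w-z}=\int_0^\infty e^{-\lambda(w-z)}\,d\lambda,
\]
valid when $\mathrm{Re}(w-z)>0$; this is the trick mentioned in Remark \ref{Cuspidal}. To apply it, I would first reorganize the double contour $(\mathscr{L}_L+\mathscr{C}_{out})\times(\mathscr{L}_R+\mathscr{C}_{in})$ using the homological relations $\mathscr{L}_L+\mathscr{C}_{out}\sim\mathscr{L}_L^*$ and $\mathscr{L}_R+\mathscr{C}_{in}\sim\mathscr{L}_R^*$ in $\mathbb{C}\setminus\{0\}$ (cf.\ figure \ref{figContourSpecial}), so that $\mathrm{Re}(w)>\mathrm{Re}(z)$ holds along the new contours; the residues at $0$ picked up by the deformation are exactly the ones absorbed by $\mathscr{C}_{in},\mathscr{C}_{out}$. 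Once the integrals are decoupled, each single integral is recognized via (\ref{rAiryp}), (\ref{rAirym}), (\ref{ppoly}), and (\ref{LRint}) as an $r$-Airy integral $A_r^{\pm}$ or a polynomial $p_k$.

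For the three homogeneous cases (i), (ii), and (iv), at most one of $w^r$ and $z^{-s}$ has a pole at $0$, so the replacement by $\mathscr{L}_R^*,\mathscr{L}_L^*$ either proceeds freely or is absorbed entirely by a single circle $\mathscr{C}_{in}$ or $\mathscr{C}_{out}$. Applying (\ref{rAiryp}) and (\ref{rAirym}) with index $r$ or $-s$, and tracking the factors $(-1)^r,(-1)^s$ that arise from rewriting $w^r=w^{-|r|}$ or $z^{-s}=z^{|s|}$ in the negative-index situations, yields the asserted single-$\lambda$ products $A^{\pm}\cdot A^{\pm}$ immediately.

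The main difficulty is case (iii), where both $w^r$ and $z^{-s}$ have poles at $0$ so a naive contour replacement does not disentangle the singularities. My plan is to split $1/(w-z)$ via the finite Taylor expansion
\[
\frac{1}{w-z}=\sum_{k=0}^{N-1}\frac{z^k}{w^{k+1}}+\frac{z^N}{w^N(w-z)},
\]
with $N$ chosen (between $s-r$ and $s$) so that both summands have manageable pole orders at $w=0$ and $z=0$. The polynomial part factorizes into products of residue integrals on $\mathscr{C}_{in}$ and $\mathscr{C}_{out}$; via (\ref{ppoly}) these produce the purely polynomial term $\sum_{k=0}^{s-r-1}(-1)^kp_k(\tau)p_{s-r-1-k}(\xi)$ and, after combining with $\mathscr{L}_L$-contributions, the mixed term $\sum_{k=0}^{s-1}p_k(\tau)A_{-r+s-k}(\xi)$. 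The remainder, with the decoupling trick now applicable, contributes the two $\int_0^\infty$-terms via (\ref{LRint}) on the $z$-side (producing $A_s^-+(-1)^sp_{s-1}$) and (\ref{rAirym}) on the $w$-side (producing $A_{-r}^-$). The hard part will be the combinatorial bookkeeping of signs, indices, and residue cross-terms so that all four terms in (iii) assemble correctly; no essentially new idea beyond the decoupling identity and the polynomial splitting of $1/(w-z)$ should be needed.
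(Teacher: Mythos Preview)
Your proposal is correct and uses the same core ingredients as the paper: the decoupling identity (\ref{IntTrick}) together with the contour relations $\mathscr{L}_L+\mathscr{C}_{out}=\mathscr{L}_L^*$, $\mathscr{L}_R+\mathscr{C}_{in}=\mathscr{L}_R^*$ and the formulas (\ref{rAiryp})--(\ref{LRint}). Cases (i) and (ii) go exactly as you describe. For case (iii) the paper organizes the computation a bit differently and more cleanly: instead of a finite truncation of the geometric series on the full double contour, it first splits the contour pair as $I_1=\int_{\mathscr{L}_L}\!\int_{\mathscr{L}_R}$, $I_2=\int_{\mathscr{C}_{out}}\!\int_{\mathscr{L}_R}$, $I_3=\int_{\mathscr{C}_{out}}\!\int_{\mathscr{C}_{in}}$, applies (\ref{IntTrick}) to $I_1$, and expands $1/(w-z)$ as the \emph{full} geometric series $\sum z^k/w^{k+1}$ on $I_2$ (where $|z|<|w|$) and $-\sum w^k/z^{k+1}$ on $I_3$ (where $|w|<|z|$); the sums terminate automatically because only finitely many residues are nonzero, so there is no remainder term to manage. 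The paper also takes a shortcut you did not mention: case (iv) is deduced from case (i) directly via the reflection symmetry of Proposition~\ref{Reflection}, avoiding a separate computation.
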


\begin{proof}
From Definition \ref{CAkerneldefinition} we have that
\begin{equation*} 
\tilde{\mathcal{K}}_{CA}((\xi,r),(\tau,s))=\frac{1}{(2\pi i)^2}\int_{\mathscr{L}_L+\mathscr{C}_{out}}dz\int_{\mathscr{L}_R+\mathscr{C}_{in}}dw\frac{1}{w-z}\frac{w^r}{z^s}e^{\frac{1}{3}w^3-\frac{1}{3}z^3-\xi w+\tau z}.
\end{equation*}

Consider the case (i). If $r,s\ge0$, then $\mathscr{C}_{in}$ does not contribute, and using $\mathscr{L}_L+\mathscr{C}_{out}=\mathscr{L}_L^*$, see figure \ref{figContourSpecial},
and the formula
\begin{equation}\label{IntTrick}
\int_0^{\infty}e^{-\lambda(w-z)}\,d\lambda=\frac{1}{w-z}
\end{equation}
valid if $\text{Re\,}(w-z)>0$, we find
\begin{align*}
\tilde{\mathcal{K}}_{CA}((\xi,r),(\tau,s))&=\int_0^\infty\left(\frac{1}{2\pi i}\int_{\mathscr{L}_L^*} e^{-\frac 13 z^3+(\tau+\lambda)z}\frac 1{z^s}\,dz\right)
\left(\frac{1}{2\pi i}\int_{\mathscr{L}_R^*} e^{\frac 13 w^3-(\xi+\lambda)w}w^r\,dw\right)d\lambda\notag
\\&=\int_0^{\infty}A_{s}^-(\tau+\lambda)A_r^+(\xi+\lambda)\,d\lambda,
\end{align*}
by (\ref{rAiryp}) and (\ref{rAirym}). Here, we also used the fact that since $r\ge 0$, we can move $\mathscr{L}_R$ to $\mathscr{L}_R^*$.

In the case (ii) we have $r\ge 0$ and $s<0$, and thus neither $\mathscr{C}_{in}$ nor $\mathscr{C}_{out}$ contribute. Using (\ref{IntTrick}) and then moving $\mathscr{L}_L$ to
$\mathscr{L}_L^*$ and $\mathscr{L}_R$ to $\mathscr{L}_R^*$, we obtain
\begin{align*}
\tilde{\mathcal{K}}_{CA}((\xi,r),(\tau,s))&=\int_0^\infty\left(\frac{1}{2\pi i}\int_{\mathscr{L}_L^*} e^{-\frac 13 z^3+(\tau+\lambda)z}z^{-s}\,dz\right)
\left(\frac{1}{2\pi i}\int_{\mathscr{L}_R^*} e^{\frac 13 w^3-(\xi+\lambda)w}w^r\,dw\right)d\lambda\notag
\\&=(-1)^s \int_0^{\infty}A_{-s}^+(\tau+\lambda)A_r^+(\xi+\lambda)\,d\lambda.
\end{align*}
Next, consider the case (iii). If $r<0$, $s\ge 0$, we can write
\begin{align*}
&\int_{\mathscr{L}_L+\mathscr{C}_{out}}dz\int_{\mathscr{L}_R+\mathscr{C}_{in}}dw=\int_{\mathscr{L}_L}dz\int_{\mathscr{L}_R+\mathscr{C}_{in}}dw
+\int_{\mathscr{C}_{out}}dz\int_{\mathscr{L}_R+\mathscr{C}_{in}}dw\\
&=\int_{\mathscr{L}_L}dz\int_{\mathscr{L}_R}dw+\int_{\mathscr{C}_{out}}dz\int_{\mathscr{L}_R}dw
+\int_{\mathscr{C}_{out}}dz\int_{\mathscr{C}_{in}}dw:=I_1+I_2+I_3.
\end{align*}
Consider first $I_1$. By (\ref{rAirym}) and (\ref{LRint}) we get
\begin{align}\label{I1}
I_1&=\int_0^\infty\left(\frac{1}{2\pi i}\int_{\mathscr{L}_L} e^{-\frac 13 z^3+(\tau+\lambda)z}\frac{1}{z^s}\,dz\right)
\left(\frac{1}{2\pi i}\int_{\mathscr{L}_R^*} e^{\frac 13 w^3-(\xi+\lambda)w}\frac{1}{w^{-r}}\,dw\right)d\lambda\notag
\\&=(-1)^s \int_0^{\infty}A_{-s}^+(\tau+\lambda)A_r^+(\xi+\lambda)\,d\lambda.
\end{align}
Since $|w|>|z|$ if $z\in\mathscr{C}_{out}$ and $w\in\mathscr{L}_R$, we can use the identity
\begin{equation*}
\frac{1}{w-z}=\sum_{k=0}^\infty\frac{z^k}{w^{k+1}}
\end{equation*}
to see that
\begin{align}\label{I2}
I_2&=\sum_{k=0}^{s-1}\left(\frac{1}{2\pi i}\int_{\mathscr{C}_{out}} e^{-\frac 13 z^3+\tau z}\frac{1}{z^{s-k}}\,dz\right)
\left(\frac{1}{2\pi i}\int_{\mathscr{L}_R} e^{\frac 13 w^3-\xi w}\frac{1}{w^{-r+k+1}}\,dw\right)\notag\\
&=\sum_{k=0}^{s-1}p_{s-k-1}(\tau)\bigg((-1)^{-r+k+1}A_{-r+k+1}^-(\xi)+p_{-r+k}(\xi)\bigg)\notag\\
&=(-1)^{s-r}\sum_{k=0}^{s-1}p_k(\tau)A_{-r+s-k}^-(\xi)+\sum_{k=0}^{s-1}(-1)^kp_k(\tau)p_{-r+s-1-k}(\xi).
\end{align}

If $z\in\mathscr{C}_{out}$ and $w\in\mathscr{C}_{in}$, then
\begin{equation*}
\frac{1}{w-z}=-\sum_{k=0}^\infty\frac{w^k}{z^{k+1}}
\end{equation*}
and we see that, by (\ref{ppoly}), and the fact that $\mathscr{C}_{in}$ is negatively oriented, we have
\begin{align}\label{I3}
I_3&=-\sum_{k=0}^{-r-1}\left(\frac{1}{2\pi i}\int_{\mathscr{C}_{out}} e^{-\frac 13 z^3+\tau z}\frac{1}{z^{s+k+1}}\,dz\right)
\left(\frac{1}{2\pi i}\int_{\mathscr{C}_{in}} e^{\frac 13 w^3-\xi w}\frac{1}{w^{-r-k}}\,dw\right)\notag\\
&=\sum_{k=0}^{-r-1}p_{s+k}(\tau)p_{-r-k-1}(\xi)
=\sum_{k=s}^{s-r-1}p_k(\tau)p_{-r+s-1-k}(\xi).
\end{align}
Adding up (\ref{I1}) - (\ref{I3}) we have proved (iii). The case (iv) follows from (i) by using Proposition \ref{Reflection}.
\end{proof}

Note that if we take $r=s\ge 0$, then by (i),
\begin{equation*}
\mathcal{K}_{CA}((\xi,r),(\tau,s))=\int_0^{\infty}A_{r}^-(\tau+\lambda)A_r^+(\xi+\lambda)\,d\lambda:=K^{(r)}(\tau,\xi),
\end{equation*}
which is called the \emph{r-Airy kernel}. Note that when $r=0$ we get the standard Airy kernel.
The r-Airy kernel has appeared previously in the work \cite{Peche} on largest eigenvalues of sample covariance
matrices and in \cite{ADvM} on Dyson's Brownian motion with outliers. See also \cite{Baik2}.

Though we do not show it in this paper using the results above and some further estimates it should be possible to prove that the scaling limit of the position 
of the last (red) particle on line $r$, close to the cusp point, has the distribution function $F_r(x)=\det(I-K^{(r)})_{L^2(x,\infty)}$. In particular, when $r=0$ we should get the Tracy-Widom distribution.

\begin{rem}
\label{GUEcorner}
It is instructive to compare the Cusp-Airy kernel to the GUE-corner kernel. Recall that the GUE-corner kernel is given by
\begin{align}
K(n,x;n',x')=-1_{n>n'}1_{x>x'}2^{n-n'}\frac{(x-x')^{n-n'-1}}{(n-n'-1)!}+\frac{2}{(2\pi i)^2}\int_{\G_0}dz\int_{L}\frac{dw}{w-z}\frac{w^{n'}}{z^{n}}e^{w^2-z^2+2zx-2wx'},
\end{align}
where $x,x'\in \R$ and $n,n'\in\Z$ and $n,n'\geq 0$, and where the contours are shown in figure \ref{figGUEContour}.

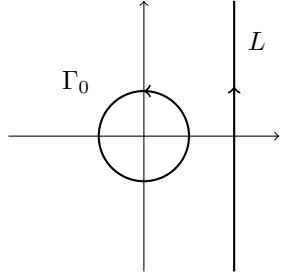
\begin{figure}[H]

\centering
\begin{tikzpicture}[xscale=0.6,yscale=0.6]

\draw[->] (-3,0) -- (3,0);
\draw[->] (0,-3) --(0,3);
\draw[thick] (2,-3) -- (2,3);
\draw[->,thick] (2,1) -- (2,1.1);
\draw[thick] (0,0) circle (1.0cm);
\draw[<-,thick] (0,1.0) -- (0.02,1.0);
\draw (-1.5,1.2) node {$\G_0$}; 
\draw (2.5,2.1) node {$L$};
\end{tikzpicture}
\caption{\label{figGUEContour} Contours for the GUE-corner kernel.}
\end{figure}
In a sense, one can regard the Cusp-Airy kernel as a double sided version of the GUE-corner kernel. If one changes the assumption that $f'_{t_c}$ has a simple root at $t_c\in R_1\bigcup R_2$, then a similar computation as in the Cusp-Airy case will yield the GUE-corner kernel for an appropriate scaling limit of $K_n((x_1^{(n)},y_1^{(n)}),(x_2^{(n)},y_2^{(n)}))$. 
\end{rem}


\section{Derivation of the Correlation Kernel}\label{sec:kernel}

\subsection{Integral Representation of the Correlation Kernel for the Yellow Particles}\label{sec:interlacingkernel}
In section A.1 in \cite{Duse14a} it was shown that the correlation kernel for the interlacing particle system is given by

\begin{align}\label{KN}
K_n((x_1,y_1),(x_2,y_2))=\tilde{K}_n((x_1,y_1),(x_2,y_2))-\phi_{(y_1,y_2)}^{(n)}(x_1,x_2),
\end{align}
where 
\begin{align}\label{KNtilde}
\tilde{K}_n((x_1,y_1),(x_2,y_2))=\frac{(n-y_1)!}{(n-y_2-1)!}\sum_{k=1}^n1_{\beta_{k}^{(n)}\geq x_2}\sum_{l=x_1+y_1-n}^{x_1}\frac{\prod_{j=x_2+y_2-n+1}^{x_2-1}(\beta_k^{(n)}-j)}{\prod_{\substack{j=x_1+y_1-n\\ j\neq l}}^{x_1}(l-j)}\prod_{i\neq k}\frac{l-\beta_i^{(n)}}{\beta_k^{(n)}-\beta_i^{(n)}}
\end{align}
and
\begin{align}\label{phi}
\phi_{(y_1,y_2)}^{(n)}(x_1,x_2)=1_{x_1\geq x_2}\frac{(n-y_1)!}{(n-y_2-1)!}\sum_{k=1}^n\sum_{l=x_1+y_1-n}^{x_1}\frac{\prod_{j=x_2+y_2-n+1}^{x_2-1}(\beta_k^{(n)}-j)}{\prod_{\substack{j=x_1+y_1-n\\ j\neq l}}^{x_1}(l-j)}\prod_{i\neq k}\frac{l-\beta_i^{(n)}}{\beta_k^{(n)}-\beta_i^{(n)}}.
\end{align}

To arrive at an integral representation for the correlation kernel we now make use of the following corollary of the residue theorem:
\begin{Cor}\label{Cauchy}
Let $\Omega\in \mathbb{C}$ be an open simply connected bounded domain with positively oriented boundary Jordan curve $\gamma$, then for an analytic function $f(z)$ in $\Omega$, we have for $\{z_1,...,z_n\}\in \Omega$ and $z_i\neq z_j$ if $i\neq j$
\begin{align*}
\sum_{i=1}^nf(z_i)\prod_{\substack{k=1\\i\neq k}}^n\frac{1}{z_i-z_k}=\frac{1}{2\pi i}\oint_{\gamma}f(z)\prod_{i=1}^n\frac{1}{z-z_i}dz.
\end{align*}
\end{Cor}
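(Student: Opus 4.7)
The plan is to recognize this corollary as a direct application of the residue theorem to the meromorphic function
\begin{equation*}
g(z) := f(z) \prod_{i=1}^n \frac{1}{z - z_i},
\end{equation*}
defined on $\Omega$. Since $f$ is analytic on $\Omega$ and the points $z_1, \ldots, z_n$ are distinct, $g$ is meromorphic on $\Omega$ with simple poles exactly at the points $z_i$ and no other singularities; in particular $g$ extends analytically across $\gamma$ to a neighborhood of $\overline{\Omega}$ (assuming the standard mild condition that $f$ is analytic on an open set containing $\overline{\Omega}$, which is implicit in writing the boundary integral).

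First I would compute the residue of $g$ at each $z_i$. Because $z_i$ is a simple pole and $f(z_i)$ is finite, we have
\begin{equation*}
\mathrm{Res}_{z = z_i} g(z) = \lim_{z \to z_i} (z - z_i) f(z) \prod_{k=1}^n \frac{1}{z - z_k} = f(z_i) \prod_{\substack{k=1 \\ k \neq i}}^n \frac{1}{z_i - z_k}.
\end{equation*}
Then the residue theorem, applied to $g$ on $\Omega$ with the positively oriented boundary $\gamma$, yields
\begin{equation*}
\frac{1}{2 \pi i} \oint_\gamma f(z) \prod_{i=1}^n \frac{1}{z - z_i} \, dz = \sum_{i=1}^n \mathrm{Res}_{z = z_i} g(z) = \sum_{i=1}^n f(z_i) \prod_{\substack{k=1 \\ k \neq i}}^n \frac{1}{z_i - z_k},
\end{equation*}
which is the claimed identity.

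There is essentially no hard step: the whole argument is a one-line invocation of the residue theorem once the pole structure of $g$ is identified. The only conceptual point worth flagging is the partial-fraction interpretation --- the formula says that the sum on the left is exactly the coefficient expansion of the rational function $\prod_i (z - z_i)^{-1}$ weighted by $f$, and the residue theorem is the natural tool to extract these coefficients simultaneously. No further estimates or limits are required.
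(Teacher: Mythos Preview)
Your proposal is correct and matches the paper's approach exactly: the paper does not even provide a proof, simply introducing the statement as ``the following corollary of the residue theorem,'' which is precisely the one-line argument you give.
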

\begin{Prop}\label{propYellowkenerl}
The correlation kernel for the yellow tiles have the following integral representation
\begin{align} \label{Yellowkernel}
K_{\mathcal{Y}}^{(n)}((x_1,y_1),(x_2,y_2))&=1_{x_1<x_2}\frac{(n-y_1)!}{(n-y_2-1)!}\frac{1}{(2\pi i)^2}\oint_{\mathscr{Z}_n}dz\oint_{\mathscr{W}_n}dw  \frac{\prod_{k=x_2+y_2-n+1}^{x_2-1}(z-k)}{\prod_{\substack{k=x_1+y_1-n}}^{x_1}(w-k)}\frac{1}{w-z}\prod_{i=1}^n\bigg(\frac{w-\beta_i^{(n)}}{z-\beta_i^{(n)}}\bigg)\nonumber\\&
-1_{x_1\geq x_2}\frac{(n-y_1)!}{(n-y_2-1)!}\frac{1}{(2\pi i)^2}\oint_{\mathscr{Z}_n'}dz\oint_{\mathscr{W}_n}dw  \frac{\prod_{k=x_2+y_2-n+1}^{x_2-1}(z-k)}{\prod_{\substack{k=x_1+y_1-n}}^{x_1}(w-k)}\frac{1}{w-z}\prod_{i=1}^n\bigg(\frac{w-\beta_i^{(n)}}{z-\beta_i^{(n)}}\bigg),
\end{align}
where $\mathscr{Z}_n$ is a counterclockwise oriented contour containing $\{\beta_j^{(n)}: \beta_j^{(n)}\geq x_2\}$ but not the set $\{\beta_j^{(n)}\leq x_2-1\}$ and $\mathscr{Z}'_n$ is a counterclockwise oriented contour containing $\{\beta_j^{(n)}: \beta_j^{(n)}<x_2\}$ but not the set $\{\beta_j^{(n)}\geq x_2+1\}$, and $\mathscr{W}_n$ contains the set $\{x_1+y_1-n,...,x_1\}$ and $\mathscr{Z}_n$ and $\mathscr{Z}'_n$.

\end{Prop}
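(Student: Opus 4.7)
The plan is to convert the double sum in (\ref{KNtilde}) and (\ref{phi}) into a double contour integral by applying Corollary \ref{Cauchy} twice, once to the inner sum over $l$ and once to the outer sum over $k$, and then to identify $K_{\mathcal{Y}}^{(n)}$ with $K_n = \tilde K_n - \phi_{(y_1,y_2)}^{(n)}$ via (\ref{KN}).

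\textbf{Step 1: the inner $l$-sum.} For fixed $k$, the sum over $l\in\{x_1+y_1-n,\dots,x_1\}$ appearing in (\ref{KNtilde}) has the form $\sum_l f(l)\prod_{j\ne l}(l-j)^{-1}$ with $f(l)=\prod_{i\ne k}(l-\beta_i^{(n)})$, a polynomial in $l$ and hence entire. Corollary \ref{Cauchy} immediately converts this into
\begin{equation*}
\frac{1}{2\pi i}\oint_{\mathscr{W}_n}\frac{\prod_{i\ne k}(w-\beta_i^{(n)})}{\prod_{j=x_1+y_1-n}^{x_1}(w-j)}\,dw,
\end{equation*}
where $\mathscr{W}_n$ is any positively oriented contour enclosing $\{x_1+y_1-n,\dots,x_1\}$.

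\textbf{Step 2: the outer $k$-sum.} After rewriting $\prod_{i\ne k}(w-\beta_i^{(n)})=\prod_{i=1}^n(w-\beta_i^{(n)})/(w-\beta_k^{(n)})$, pulling the finite $k$-sum inside the $w$-integral, and using $\prod_{i\ne k}(\beta_k^{(n)}-\beta_i^{(n)})^{-1}$, the outer sum has the form treated by the residue-theorem version of Corollary \ref{Cauchy}: a sum over a subset $S$ of $\{\beta_j^{(n)}\}$ of $g(\beta_k^{(n)})\prod_{i\ne k}(\beta_k^{(n)}-\beta_i^{(n)})^{-1}$, equal to $(2\pi i)^{-1}\oint_\gamma g(z)/\prod_i(z-\beta_i^{(n)})\,dz$ for any contour $\gamma$ enclosing exactly $S$ in a region of analyticity of $g$. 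Here $g(z)=\prod_{j=x_2+y_2-n+1}^{x_2-1}(z-j)\cdot\prod_{i=1}^n(w-\beta_i^{(n)})/(w-z)$, whose only singularity is at $z=w$; since we require $\mathscr{W}_n$ to contain $\mathscr{Z}_n$, this singularity lies outside $\mathscr{Z}_n$ and the application is legitimate. Taking $S=\{j:\beta_j^{(n)}\ge x_2\}$ enclosed by $\mathscr{Z}_n$ yields
\begin{equation*}
\tilde K_n((x_1,y_1),(x_2,y_2))=\frac{(n-y_1)!}{(n-y_2-1)!}\frac{1}{(2\pi i)^2}\oint_{\mathscr{Z}_n}dz\oint_{\mathscr{W}_n}dw\,\frac{\prod_{k=x_2+y_2-n+1}^{x_2-1}(z-k)}{\prod_{k=x_1+y_1-n}^{x_1}(w-k)}\frac{1}{w-z}\prod_{i=1}^n\frac{w-\beta_i^{(n)}}{z-\beta_i^{(n)}}.
\end{equation*}

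\textbf{Step 3: the $\phi$ term and combination.} The same two conversions applied to (\ref{phi}), where the $k$-sum has no indicator, give the analogous integral but with contour $\mathscr{Z}_n^{\mathrm{all}}$ enclosing \emph{all} $\beta_i^{(n)}$. Deform (or split) $\mathscr{Z}_n^{\mathrm{all}}$ homologously into $\mathscr{Z}_n + \mathscr{Z}_n'$, which is permissible because the integrand in $z$ has its only singularities at $\{\beta_i^{(n)}\}$ and at $z=w\in\mathscr{W}_n$, and $\mathscr{W}_n$ is disjoint from both $\mathscr{Z}_n$ and $\mathscr{Z}_n'$. Subtracting via (\ref{KN}) and distinguishing cases: for $x_1<x_2$ only $\tilde K_n$ survives, giving the $\mathscr{Z}_n$-integral; for $x_1\ge x_2$ the two $\mathscr{Z}_n$-integrals cancel and only the negative of the $\mathscr{Z}_n'$-integral remains. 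This is exactly (\ref{Yellowkernel}).

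\textbf{Main obstacle.} The delicate point is not the algebra but the bookkeeping of analyticity: after pulling the $k$-sum inside the $w$-integral, the factor $1/(w-z)$ introduces a mutual pole that must lie on the correct side of both contours, and the condition ``$\mathscr{W}_n$ contains $\mathscr{Z}_n$ and $\mathscr{Z}_n'$'' in the statement is precisely what makes Corollary \ref{Cauchy} applicable in Step 2 (and the Fubini/contour-splitting legal in Step 3). Verifying that the three disjoint sets of singularities $\{\beta_j^{(n)}\ge x_2\}$, $\{\beta_j^{(n)}<x_2\}$, and $\{x_1+y_1-n,\dots,x_1\}$ can indeed be separated by such contours is the one point requiring genuine care.
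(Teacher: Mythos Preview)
Your proof is correct and follows essentially the same approach as the paper: both apply Corollary~\ref{Cauchy} first to the $l$-sum and then to the $k$-sum, with the containment condition $\mathscr{Z}_n,\mathscr{Z}_n'\subset\mathscr{W}_n$ ensuring the pole at $z=w$ is harmless. The only cosmetic difference is ordering: the paper first combines $\tilde K_n-\phi$ at the level of sums (splitting into $1_{x_1<x_2}\sum_{\beta_k\ge x_2}-1_{x_1\ge x_2}\sum_{\beta_k<x_2}$) and then converts each piece to a double contour integral, whereas you convert $\tilde K_n$ and $\phi$ to integrals separately and perform the cancellation $\mathscr{Z}_n^{\mathrm{all}}=\mathscr{Z}_n+\mathscr{Z}_n'$ afterwards.
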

\begin{proof}
The correlation kernel for the yellow tiles is the same as that for the interlacing particles, i.e. it is given by (\ref{KN}). From (\ref{KN}) to (\ref{phi}) we see that
\begin{align} \label{Yellowkernel2}
\frac{(n-y_2-1)!}{n-y_1)}K_{\mathcal{Y}}^{(n)}((x_1,y_1),(x_2,y_2))&=1_{x_1<x_2}\sum_{k=1}^n1_{\beta_{k}^{(n)}\geq x_2}\sum_{l=x_1+y_1-n}^{x_1}\frac{\prod_{j=x_2+y_2-n+1}^{x_2-1}(\beta_k^{(n)}-j)}{\prod_{\substack{j=x_1+y_1-n\\ j\neq l}}^{x_1}(l-j)}\prod_{i\neq k}\frac{l-\beta_i^{(n)}}{\beta_k^{(n)}-\beta_i^{(n)}}\notag\\
&-1_{x_1\geq x_2}\sum_{k=1}^n1_{\beta_{k}^{(n)}< x_2}\sum_{l=x_1+y_1-n}^{x_1}\frac{\prod_{j=x_2+y_2-n+1}^{x_2-1}(\beta_k^{(n)}-j)}{\prod_{\substack{j=x_1+y_1-n\\ j\neq l}}^{x_1}(l-j)}\prod_{i\neq k}\frac{l-\beta_i^{(n)}}{\beta_k^{(n)}-\beta_i^{(n)}}.
\end{align}
Consider the first expression in the right hand side of (\ref{Yellowkernel2}). Using Corollary \ref{Cauchy} we can rewrite the $l$-summation and we find
\begin{align*}
&1_{x_1<x_2}\sum_{k=1}^n1_{\beta_{k}^{(n)}\geq x_2}\frac 1{2\pi i}\oint_{\mathscr{W}_n}dw
\frac{\prod_{j=x_2+y_2-n+1}^{x_2-1}(\beta_k^{(n)}-j)}{\prod_{j=x_1+y_1-n}^{x_1}(w-j)}\prod_{i\neq k}\frac{w-\beta_i^{(n)}}{\beta_k^{(n)}-\beta_i^{(n)}}\notag\\
&=1_{x_1<x_2}\sum_{k=1}^n1_{\beta_{k}^{(n)}\geq x_2}\frac 1{2\pi i}\oint_{\mathscr{W}_n}dw
\frac{\prod_{j=x_2+y_2-n+1}^{x_2-1}(\beta_k^{(n)}-j)}{\prod_{j=x_1+y_1-n}^{x_1}(w-j)}\frac 1{w-\beta_k^{(n)}}
\frac{\prod_{i=1}^n(w-\beta_i^{(n)})}{\prod_{i\neq k}(\beta_k^{(n)}-\beta_i^{(n)})}.
\end{align*}
Since the $w$-contour is outside $\mathscr{Z}_n$ the sum over the $\beta_k^{(n)}\geq x_2$ can be rewritten using Corlollary \ref{Cauchy} and we find
\begin{equation*}
1_{x_1<x_2}\frac 1{(2\pi i)^2}\oint_{\mathscr{W}_n}dw\oint_{\mathscr{Z}_n}dz
\frac{\prod_{j=x_2+y_2-n+1}^{x_2-1}(z-j)}{\prod_{j=x_1+y_1-n}^{x_1}(w-j)}\frac 1{w-z}
\frac{\prod_{i=1}^n(w-\beta_i^{(n)})}{\prod_{i=1}^n(z-\beta_i^{(n)})}.
\end{equation*}
The second expression in the right hand side of (\ref{Yellowkernel2}) can be rewritten in exactly the same way and we have proved the proposition.

\end{proof}

\subsection{Particle Transformation}

From knowledge of a correlation kernel for the yellow particles we now want to derive an expression for a correlation kernel for the red (and blue) particles.

\begin{Lem}
\label{Lem:RedBlue}
Correlation kernels for the red and blue tiles (particles) are given by
\begin{align*} 
K_{\mathcal{R}}^{(n)}((x_1,y_1),(x_2,y_2))&=-K_{\mathcal{Y}}^{(n)}((x_1,y_1),(x_2,y_2-1))\\
 K_{\mathcal{B}}^{(n)}((x_1,y_1),(x_2,y_2))&=K_{\mathcal{Y}}^{(n)}((x_1,y_1),(x_2+1,y_2-1)).
\end{align*}
\end{Lem}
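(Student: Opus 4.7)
The plan is to establish the kernel identities by combining a local combinatorial bijection between lozenge types on the transformed lattice with an algebraic verification at the level of integral representations.

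First, I would establish the pointwise identification of lozenge types. In the coordinate system of Figure~\ref{figRegHexLozTransform}, a direct inspection of the local lozenge shapes shows that a red lozenge occupies position $(x,y)$ if and only if some yellow particle at $(x,y-1)$ has moved strictly to the right in going to row $y$: concretely, using the interlacing $y_i^{(y)} \geq y_i^{(y-1)}$ together with the strict bound $y_i^{(y)} > y_{i+1}^{(y-1)}$, one verifies that $\eta_{\mathcal R}(x,y) = \eta_{\mathcal Y}(x,y-1)(1 - \eta_{\mathcal Y}(x,y))$. The analogous inspection for blue lozenges gives $\eta_{\mathcal B}(x,y) = \eta_{\mathcal Y}(x+1,y)(1 - \eta_{\mathcal Y}(x+1,y-1))$, the horizontal shift by one arising from the orientation of the blue parallelogram.

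Second, I would translate these pointwise identities into a relation between correlation kernels. Expanding the product over the $m$ red events and applying the determinantal formula for the yellow process yields an alternating sum of $2^m$ determinants in $K_{\mathcal Y}$ evaluated on suitable unions of points from rows $y_i - 1$ and $y_i$. The crucial step is to show that this sum collapses to a single $m \times m$ determinant in the shifted kernel, with the entry at position $(i,j)$ equal to $-K_{\mathcal Y}((x_i,y_i),(x_j,y_j-1))$. This collapse is a consequence of the rigidity of the interlacing constraint: the cross terms involving the same horizontal coordinate $x$ on both rows $y-1$ and $y$ correspond to events that are deterministic given the surrounding configuration, and their contributions telescope. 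Concretely, the sum rule satisfied by $K_{\mathcal Y}$ (derivable from the integral representation in Proposition~\ref{propYellowkenerl} by contour manipulation) eliminates all but the leading term in the Lindstr\"om--Gessel--Viennot expansion.

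Third, the resulting kernel formula is matched to (\ref{Red}) by direct substitution $y_2 \to y_2 - 1$ in (\ref{Yellowkernel}). The product index $\prod_{k = x_2 + y_2 - n + 1}^{x_2 - 1}(z - k)$ becomes $\prod_{k = x_2 + y_2 - n}^{x_2 - 1}(z - k)$, matching (\ref{Red}); the prefactor $(n - y_1)!/(n - y_2 - 1)!$ becomes $(n - y_1)!/(n - y_2)!$, which is a gauge transformation of the form $h(y_1)/h(y_2)$ with $h(y) = 1/(n - y)!$ and hence does not affect the determinantal correlations. The blue case follows by the analogous shift, with the additional $x_2 \to x_2 + 1$ and no sign flip since the horizontal translation exactly absorbs the parity change in the bijection. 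The main obstacle is the collapse of the inclusion-exclusion expansion in the second step: verifying rigorously that the seemingly complex dependence of red events on pairs of yellow events across adjacent rows reduces to a single shifted kernel expression. This collapse relies on specific vanishing identities for the integral representation of $K_{\mathcal Y}$, which must be established by explicit contour manipulation rather than abstract DPP theory.
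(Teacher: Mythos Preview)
Your approach differs fundamentally from the paper's, and the central step you flag as ``the main obstacle'' is indeed a genuine gap that is not closed.

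The paper does not pass through yellow indicators at all. It works in the dimer model: each lozenge type (yellow, red, blue) corresponds to a specific edge orientation in a perfect matching of the underlying honeycomb graph. Kenyon's formula (Corollary~3 of \cite{Kenyon}) states that the probability of seeing any prescribed set of edges in a uniform dimer cover is a determinant in the inverse Kasteleyn matrix $K_{\mathcal P}^{-1}$. Petrov's Theorem~6.1 in \cite{Pet12} identifies $K_{\mathcal P}^{-1}((y,m);(x,n)) = (-1)^{y-x+m-n} K_{\mathcal Y}^{(n)}((x,n),(y,m))$. A red lozenge at $(x,n)$ is precisely the edge from white vertex $(x,n)$ to black vertex $(x,n-1)$, so the $m$-point red correlation is $\det\big(K_{\mathcal P}^{-1}((x_i,n_i-1),(x_j,n_j))\big)_{i,j}$, and substituting Petrov's identity and absorbing the alternating sign into the determinant gives the kernel $-K_{\mathcal Y}^{(n)}((x_i,n_i),(x_j,n_j-1))$ in one line. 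The blue case is the same computation with the other edge orientation. No inclusion--exclusion, no collapse, no contour identities are needed.

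Your route, by contrast, attempts to express $\eta_{\mathcal R}(x,y)$ as a quadratic function of yellow indicators on adjacent rows and then to reduce the resulting $2^m$-term expansion to a single determinant. Even granting the indicator identity, this reduction is not a formality: products of correlated Bernoulli variables drawn from a determinantal process do not in general form a determinantal process, and the asymmetric shift in the target kernel (only $y_2\to y_2-1$, with $y_1$ unchanged) does not match the shape of a standard particle--hole transform on two time slices. Your justification --- ``cross terms\ldots\ telescope'' and ``sum rule\ldots\ derivable by contour manipulation'' --- names the desired conclusion rather than proving it. Without an explicit algebraic identity showing that all $2^m-1$ correction terms vanish, the argument is incomplete. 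The Kasteleyn route avoids this entirely because it never writes red events as composites of yellow events; it treats red edges as primary objects with their own determinantal structure inherited directly from the dimer model.
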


 \begin{proof}
Let $K_{\mathcal{P}}$ be the Kasteleyn matrix of the adjacency matrix of the honeycomb graph $G_{\mathcal{P}}$ of the polygon $\mathcal{P}$. It is defined according to
\begin{align} 
K_{\mathcal{P}}((x,n);(y,m))
= \left\{
\begin{array}{rl} 1 & \text{if $(y,m)=(x,n)$} \\
1 &  \text{if $(y,m)=(x,n-1)$}\\
1 & \text{if $(y,m)=(x+1,n-1)$}\\
0&  \text{otherwise}
 \end{array} \right.
\end{align}
Recall that if $(y,m)=(x,n)$ we have a yellow particle (rhombi of shape) at position $(x,n)$ in our lattice. Similarly, if  $(y,m)=(x,n-1)$ we have a red particle at position $(x,n)$ and if $(y,m)=(x+1,n-1)$ we have a blue particle at position $(x,n)$.\newline \newline It was shown in \cite{Pet12} Theorem 6.1 that inverse Kasteleyn matrix $K_{\mathcal{P}}^{-1}$ is related to the correlation kernel of the yellow particles $K_{\mathcal{Y}}$ according to
\begin{align} 
K_{\mathcal{P}}^{-1}((y,m);(x,n))=(-1)^{y-x+m-n}K_{\mathcal{Y}}^{(n)}(x,n;y,m).
\end{align}
From Corollary 3 in \cite{Kenyon} one has that the probability of finding a set of edges $\{b_1w_1,...b_kw_k\}$ is given by
\begin{align*} 
\p[\text{edges at $\{w_1b_1,...,w_kb_k\}$}]=\bigg(\prod_i^kK_{\mathcal{P}}(w_i,b_i)\bigg)\det(K_{\mathcal{P}}^{-1}(b_i,w_j))_{i,j}^k
\end{align*}
Now finding $k$ red particles at positions $\{(x_i,n_i)\}_{i=1}^k$ is equivalent to finding the edges $\{((x_i,n_i),(x_i,n_i-1))\}_{i=1}^k$
Hence, the probability of finding  $k$ red particles at positions $\{(x_i,n_i)\}_{i=1}^k$ equals
\begin{align*} 
\p[\text{red particles at positions $\{(x_i,n_i)\}_{i=1}^k$}]&=\p[\text{edges at positions $\{((x_i,n_i),(x_i,n_i-1))\}_{i=1}^k$}]\\&=\det(K_{\mathcal{P}}^{-1}((x_i,n_i-1),(x_j,n_j))_{i,j}^k
\\&=\det((-1)^{x_i-x_j+n_i-n_j+1}K_{\mathcal{Y}}^{(n)}(x_j,n_j;x_i,n_i-1))_{i,j}^k
\\&=\det(-K_{\mathcal{Y}}^{(n)}(x_i,n_i;x_j,n_j-1))_{i,j}^k
\end{align*}
However, by definition
\begin{align*} 
\rho_{\mathcal{R}}((x_1,n_1),(x_2,n_2),...,(x_k,n_k))&=\p[\text{red particles at positions $\{(x_i,n_i)\}_{i=1}^k$}]\\&=\det(K_{\mathcal{R}}^{(n)}(x_i,n_i;y_j,n_j))_{1\leq i,j\leq k}\\
\end{align*}
Hence, we find that as a correlation kernel for the red particles we can take
\begin{align*} 
K_{\mathcal{R}}^{(n)}(x_i,n_i;y_j,n_j)=-K_{\mathcal{Y}}^{(n)}(x_i,n_i;x_j,n_j-1).
\end{align*}
Similarly, for the blue particles we can take
\begin{align*} 
K_{\mathcal{B}}^{(n)}(x_i,n_i;y_j,n_j)=K_{\mathcal{Y}}^{(n)}(x_i,n_i;x_j+1,n_j-1).
\end{align*}
This concludes the proof.
\end{proof}

We can now prove Proposition \ref{Transform}.
Combining Lemma \ref{Lem:RedBlue} with (\ref{Yellowkernel}) we see that a correlation kernel for the red particles is given by
\begin{align} 
&-1_{x_1<x_2}\frac{(n-y_1)!}{(n-y_2)!}\frac{1}{(2\pi i)^2}\oint_{\mathscr{Z}_n}dz\oint_{\mathscr{W}_n}dw  \frac{\prod_{k=x_2+y_2-n}^{x_2-1}(z-k)}{\prod_{\substack{k=x_1+y_1-n}}^{x_1}(w-k)}\frac{1}{w-z}\prod_{i=1}^n\bigg(\frac{w-\beta_i^{(n)}}{z-\beta_i^{(n)}}\bigg)\nonumber \\
&+1_{x_1\geq x_2}\frac{(n-y_1)!}{(n-y_2)!}\frac{1}{(2\pi i)^2}\oint_{\mathscr{Z}_n'}dz\oint_{\mathscr{W}_n}dw  \frac{\prod_{k=x_2+y_2-n}^{x_2-1}(z-k)}{\prod_{\substack{k=x_1+y_1-n}}^{x_1}(w-k)}\frac{1}{w-z}\prod_{i=1}^n\bigg(\frac{w-\beta_i^{(n)}}{z-\beta_i^{(n)}}\bigg)
\end{align}
We can remove the prefactor $\frac{(n-y_1)!}{(n-y_2)!}$ since it cancels in the determinantal expression for the correlation functions. This proves Proposition \ref{Transform}.

\section{Appendix. Determinantal Point Processes}

In this appendix we give a brief introduction to determinantal random point
processes that suffices for our purposes. See e.g. \cite{Joh06b},
for a more complete treatment.

Let $\Lambda$ be a Polish space. Fix $N \in \N \cup \{\infty\}$ and
$Y \subset \Lambda^N$, a space of configurations of $N$-particles of
$\Lambda$. Denote each $y \in Y$ as $y = (y_1,\ldots,y_N)$. Assume, for all
$y \in Y$ and compact Borel sets $B \subset \Lambda$, that the number of particles
from $y$ contained in $B$ is finite, i.e., $\# \{y_i \in B\} < \infty$. Let
$\mathcal{F}$ be the sigma-algebra generated by sets of the form
$\{y \in Y: \# \{y_i \in B\} = m \}$ for all $m \le N$ and Borel sets
$B \subset \Lambda$. A probability space of the form $(Y,\mathcal{F},\Pp)$ is
referred to as a {\em random point process}.

Given such a process, $m \leq N$, and $B \subset \Lambda^m$,
define $N_B^m : Y \to \N$ by,
\begin{equation*}
N_B^m (y) := \# \{(y_{i_1},\ldots,y_{i_m}) \in B : i_1 \ne \cdots \ne i_m \},
\end{equation*}
for all $y \in Y$. In words, $N_B^m (y)$ is the number of distinct
$m$-tuples of particles from $y$ that are contained in $B$. Then define a
measure on $\Lambda^m$ by $B \mapsto \E [ N_B^m ]$ for all Borel subsets
$B \subset \Lambda^m$. Assume that this is well-defined and finite whenever
$B$ is bounded. Then, given a reference measure $\lambda$ on $\Lambda$, the
density of the above measure with respect to $\lambda^m$, whenever it exists,
is referred to as the {\em $m^\text{th}$ correlation function}, $\rho_m$. That
is,
\begin{equation*}
\int_B \rho_m(x_1,\ldots,x_m) d\lambda[x_1] \ldots d\lambda[x_m] = \E [ N_B^m ],
\end{equation*}
for all Borel subset $B \subset \Lambda^m$.

A random point process is called {\em determinantal} if all correlation functions
exist and there exists a function $K : \Lambda^2 \to \C$ for which
\begin{equation*}
\rho_m(x_1,\ldots,x_m) = \det[K(x_i,x_j)]_{i,j=1}^m,
\end{equation*}
for all $x_1,\ldots,x_m \in \Lambda$ and $m \leq N$. $K$ is called the {\em correlation
kernel} of the process.
Note, correlation kernels are not unique. For example, when
$\Lambda = \R$, a new kernel $J : \R^2 \to \C$ can be defined by
$J(u,v) := \frac{w(u)}{w(v)} K(u,v)$ for all $u,v \in \R$, where $w$ is any
non-zero complex function. Also, kernels can be viewed as
integral operators on $L^2(\Lambda)$ by the relation,
\begin{equation*}
K f(u) := \int K(u,v) f(v) d\lambda[v],
\end{equation*}
whenever the right hand side is well-defined.

Finally consider a measurable function $\phi: \Lambda \to \C$ with bounded support,
$B$, for which
\begin{equation*}
\sum_{n=0}^\infty \frac{ \Vert \phi \Vert_\infty ^n}{n!}
\int_{B^n} \det[K(x_i,x_j)]_{i,j=1}^n d\lambda[x_1] \ldots d\lambda[x_n] < \infty.
\end{equation*}
Proposition 2.2 of \cite{Joh06b}, then gives, 
\begin{equation*}
\E[ \prod_j(1-\phi(x_j)) ] = \sum_{n=0}^\infty \frac{(-1)^n}{n!} \int_{B^n}
\prod_{j=1}^n \phi(x_j) \det[K(x_i,x_j)]_{i,j=1}^n d\lambda[x_1] \ldots d\lambda[x_n].
\end{equation*}
This quantity is referred to as the {\em Fredholm determinant}, denoted
$\det[I-\phi K]_{L^2(B)}$.

\vskip 0.5cm

\noindent
{\sc Erik Duse}, Department of Mathematics, KTH Royal Institute of Technology, Stockholm, Sweden, \texttt{duse@kth.se}
\newline\newline
{\sc Kurt Johansson}, Department of Mathematics, KTH Royal Institute of Technology, Stockholm, Sweden, \texttt{kurtj@kth.se}
\newline\newline
{\sc Anthony Metcalfe}, Department of Mathematics, Uppsala University, Uppsala, Sweden, \newline\texttt{anthony.metcalfe@math.uu.se}

\end{document}